\theoremstyle{definition}
\newtheorem{Thm}{Theorem}[section]
\newtheorem{Def}{Definition}[section]
\newtheorem{Lem}[Thm]{Lemma}
\newtheorem{Cor}[Thm]{Corollary}
\newtheorem{prop}[Thm]{Proposition}
\newtheorem{Rem}{Remark}[section]
\newtheorem{Ex}{Example}[subsection]
\newtheorem{theorem}{Theorem}
\newcommand{\ow}{\omega}
\newcommand{\lda}{\lambda}
\newcommand{\C}{{\mathbb{C}}}
\newcommand{\R}{{\mathbb{R}}}
\newcommand{\Z}{{\mathbb{Z}}}
\newcommand{\N}{{\mathbb{N}}}
\DeclareMathOperator{\Sp}{Sp}
\DeclareMathOperator{\Diff}{Diff}
\DeclareMathOperator{\id}{\text{id}}
\DeclareMathOperator{\im}{Im}
\DeclareMathOperator{\crit}{crit}\DeclareMathOperator{\shift}{shift}
\DeclareMathOperator{\Fix}{\text{Fix}}
\DeclareMathOperator{\lcm}{lcm}
\DeclareMathOperator{\symp}{\text{Symp}}
\DeclareMathOperator{\WFH}{WFH}
\DeclareMathOperator{\LFH}{LFH}
\DeclareMathOperator{\WFC}{WFC}
\DeclareMathOperator{\Spec}{\text{Spec}}
\DeclareMathOperator{\ham}{\text{Ham}}
\newcommand{\p}{\partial}
\begin{document}


\title{Volume growth in the component of  fibered twists}

\author{
Joontae Kim
\thanks{Department of Mathematics and Research Institute of Mathematics,  Seoul National University, 1, Gwanak-ro, Gwanak-gu, 08826, Seoul, Republic of Korea.\newline \emph{E-mail address} : \texttt{joontae@snu.ac.kr}}
\and
Myeonggi Kwon
\thanks{Mathematisches Institut, Universit\"at Heidelberg, Im Neuenheimer Feld 205, 69120 Heidelberg, Germany. \newline \emph{E-mail address} : \texttt{mkwon@mathi.uni-heidelberg.de}}
\and
Junyoung Lee
\thanks{\emph{E-mail address} : \texttt{leejunyoung0217@gmail.com}}}
\date{}

\maketitle
\setcounter{tocdepth}{2}
\numberwithin{equation}{section}


\begin{abstract}
For a Liouville domain $W$ whose boundary admits a periodic Reeb flow, we can consider the connected component $[\tau] \in \pi_0(\symp^c(\widehat W))$ of fibered twists. In this paper, we investigate an entropy-type invariant, called the slow volume growth, of the component $[\tau]$ and give a uniform lower bound of the growth using wrapped Floer homology. We also show that $[\tau]$ has infinite order in $\pi_0(\symp^c(\widehat W))$ if there is an admissible Lagrangian $L$ in $W$ whose wrapped Floer homology is infinite dimensional.

We apply our results to fibered twists coming from the Milnor fibers of $A_k$-type singularities and complements of a symplectic hypersurface in a real symplectic manifold. They admit so-called real Lagrangians, and we can explicitly compute wrapped Floer homology groups using a version of Morse-Bott spectral sequences.\end{abstract}



\section{Introduction} \label{intro}

One suggestive notion to measure the complexity of a compactly supported diffeomorphism is \emph{volume growth}. Let $(M, g)$ be a Riemannian manifold. Given a compactly supported diffeomorphism $\phi \in \Diff^c(M)$, we define the {\it $i$-dimensional slow volume growth $s_i(\phi)$} by
$$s_i(\phi)=\sup_{\sigma \in \Sigma_i} \liminf_{m \rightarrow \infty} {\log \mu_g(\phi^m (\sigma)) \over \log m} \in [0, \infty]$$
where $\Sigma_i$ is the set of all smooth embeddings of the $i$-dimensional cube $[0,1]^i$ into $M$, and $\mu_g$ is the volume measure induced by the metric $g$. Note that $s_i(\phi)$ is independent of the choice of the metric. The growth $s_i(\phi)$ measures how quickly the volume of $i$-dimensional cubes increases under the iteration of the map. In particular, the \emph{slow} volume growth detects this in a polynomial rate. 

In symplectic geometry, the slow volume growth has provided interesting results on symplectomorphisms. Let $(M, \ow)$ be a symplectic manifold. We denote by $\symp^{(c)}(M, \omega)$ the group of (compactly supported) symplectomorphisms and by $\symp^{(c)}_0(M, \omega)$ the connected component of the identity. Polterovich gave in \cite{Pol1} uniform lower bounds of the 1-dimensional growth $s_1(\phi)$ for every $\phi \in \symp_0(M, \omega) \setminus \{\id\}$ where $(M, \omega)$ is a closed symplectic manifold with vanishing second homotopy group. Frauenfelder and Schlenk proved in \cite{FS1} that $s_1(\phi) \ge 1$, for every symplectomorphism $\phi \in \symp_0^c(T^* B) \setminus \{\id \}$ where $B$ is a closed manifold. 

The first result that studies the volume growth of elements which are not in the identity component was given in Frauenfelder-Schlenk \cite{FS05}. They considered the generalized Dehn-Seidel twist $\theta$ on the cotangent bundle $T^* B$ of an $n$-dimensional compact rank one symmetric space $B$, shortly CROSS. They proved that $s_n(\phi) \ge 1$ for any $\phi \in \symp^c(T^* B)$ such that $[\phi]=[\theta^k] \in \pi_0(\symp^c_0(T^* B))$ for some $k \in \mathbb{Z} \setminus \{0\}$. An essential idea is that the growth of the intersection number $\#(T_{x_0}^*B \cap \theta^k(T_{x_0}^*B))$ under the iteration of $\theta$ can give a lower bound of the $n$-dimensional slow volume growth. They used Lagrangian Floer homology on cotangent bundles, and a point is that the lower bound is given \emph{uniformly} to the class $[\theta^k]$, due to an invariance property of the Floer homology combined with a geometric argument; such a lower bound for $\theta^k$ is more or less direct from the definition. See also \cite{FLS} and \cite{FS06} for related topics on the slow volume growth. Recently, Alves and Meiwes studied a volume growth of Reeb flows on contact manifolds in \cite{AM}.

In this paper, we extend the idea in \cite{FS05} to more general class of Liouville domains $W$ with a \emph{periodic Reeb flow} on the boundary, in terms of wrapped Floer homology. Denoting by $\widehat{W}$ the completion of $W$ (Section \ref{WFH}), we can define a symplectomorphism called a \emph{fibered twist} $\tau:\widehat{W}\to\widehat{W}$ using the periodic Reeb flow, and cotangent bundles over CROSSes form a special case of these Liouville domains. 

Given an \emph{admissible} Lagrangian $L$ in $W$ see Definition \ref{def: admLag}, we can consider the wrapped Floer homology $\WFH_*(L; W)$ over $\Z_2$-coefficients. We say that $\WFH_*(L; W)$ has a \emph{linear growth} if its dimension increases linearly along the action, see Definition \ref{def: lineargrowth}. A linear growth of $\WFH_*(L; W)$ provides a uniform lower bound of the slow volume growth $s_n$ in the following sense.

\begin{theorem} \label{ThmA} \label{main thm}
Let $(W^{2n}, \lambda)$ be a Liouville domain with a periodic Reeb flow on the boundary and assume that $H^1_c(W; \mathbb{R})=0$. Suppose there exists an admissible Lagrangian ball $L$ such that $\WFH_*(L; W)$ has a linear growth. Then the $n$-dimensional slow volume growth satisfies that $s_n(\phi) \ge 1$ for any compactly supported symplectomorphism such that $[\phi]=[\tau^k] \in \pi_0(\symp^c(\widehat{W}))$ for some $k \ne 0$.
\end{theorem}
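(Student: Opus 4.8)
The plan is to adapt the Frauenfelder--Schlenk strategy for the Dehn--Seidel twist, replacing Lagrangian Floer homology of cotangent fibres by wrapped Floer homology of $L$. There are three moving parts: reduce to the standard fibered twist $\tau^{km}$; bound $\mu_g(\phi^m(\sigma))$ below by an intersection count via coarea; and bound that count below by the dimension of $\WFH_*(L;W)$ in an action window of length $\asymp m$, which grows linearly by hypothesis. For the first part, since $[\phi]=[\tau^k]$ there is $\psi\in\symp_0^c(\widehat W)$ with $\phi=\psi\circ\tau^k$, and $H^1_c(W;\R)=0$ makes the flux homomorphism on $\symp_0^c(\widehat W)$ vanish, so $\psi\in\ham^c(\widehat W)$. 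Writing
\[
\phi^m=\psi\cdot(\tau^k\psi\tau^{-k})\cdots(\tau^{(m-1)k}\psi\tau^{-(m-1)k})\cdot\tau^{km}
\]
and using that a conjugate of a compactly supported Hamiltonian diffeomorphism by a symplectomorphism is again compactly supported Hamiltonian, we get $\phi^m=\Psi_m\circ\tau^{km}$ with $\Psi_m\in\ham^c(\widehat W)$. Since $\phi$ is compactly supported, all iterates $\phi^m(L)$ lie in one fixed compact set $K$.

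\textbf{Geometric step.} Choose $\sigma$ to be an embedded $n$-cube whose image is a large compact piece of $L$ (possible since $L\cong D^n$). Fix a neighbourhood of $L$ foliated by admissible Lagrangian discs $\{L_y\}_{y\in P}$, each Hamiltonian isotopic to $L$ through compactly supported isotopies (e.g.\ translates of the zero section inside a Weinstein neighbourhood of $L$, extended to admissible Lagrangians), with leaf projection $\mathrm{pr}$ onto an $n$-parameter set $P$; $\mathrm{pr}$ is Lipschitz with a constant independent of $m$. The coarea formula applied to $\mathrm{pr}$ restricted to $\phi^m(\sigma)$ gives
\[
\mu_g(\phi^m(\sigma))\ \ge\ c\int_{P}\#\big(\phi^m(\sigma)\cap L_y\big)\,d\mu_P(y).
\]
The averaging over $y$ is exactly what absorbs the fact that the \emph{fixed} map $\phi^m$ can make individual intersections arbitrarily non-transverse as $m\to\infty$; this is the heart of the Frauenfelder--Schlenk mechanism, here localised near $L$. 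Moreover the intersections of $\phi^m(L)$ with $L_y$ not captured by $\phi^m(\sigma)$ occur near infinity where $\phi^m=\mathrm{id}$, so they form an $m$-independent finite set, giving $\#(\phi^m(\sigma)\cap L_y)\ge \#(\phi^m(L)\cap L_y)-O(1)$.

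\textbf{Floer invariance and linear growth.} For generic $y$, $\#(\phi^m(L)\cap L_y)\ge \dim HF(\phi^m(L),L_y)=\dim HF(\tau^{km}(L),\Psi_m^{-1}(L_y))=\dim HF(\tau^{km}(L),L)$: the first equality by applying $\Psi_m^{-1}$, the second because $\Psi_m^{-1}(L_y)$ is compactly supported Hamiltonian isotopic to $L$ and Floer homology of exact admissible Lagrangians is invariant under such isotopies (after a fixed small positive wrapping at infinity so that everything is defined). It remains to show $\dim HF(\tau^{km}(L),L)\gtrsim m$. Up to compactly supported Hamiltonian isotopy $\tau$ is the time-one flow of a Hamiltonian equal to the Reeb period near $\partial W$ --- this is precisely where periodicity of the Reeb flow is used: it makes $\tau$ globally defined and makes the action shift per iteration a fixed amount $T$. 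Hence, via the continuation maps defining wrapped Floer homology, $HF(\tau^{km}(L),L)$ is identified with the part of the directed system in an action window of length $\asymp kmT$, which by Definition~\ref{def: lineargrowth} and the linear-growth hypothesis has dimension $\ge c'\,km\ge c'm$ for $m$ large. Combining the three displays, $\mu_g(\phi^m(\sigma))\ge c''m$ for $m$ large, so
\[
s_n(\phi)\ \ge\ \liminf_{m\to\infty}\frac{\log\mu_g(\phi^m(\sigma))}{\log m}\ \ge\ \liminf_{m\to\infty}\frac{\log(c''m)}{\log m}\ =\ 1 .
\]

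The step I expect to be the main obstacle is the identification of $HF(\tau^{km}(L),L)$ with a length-$\asymp km$ window of the wrapped Floer directed system: one must pin down exactly how the fibered twist shifts the action filtration, check that the bounded wrapping at infinity and the unknown factor $\Psi_m$ contribute only $m$-independent errors, and then feed in the linear growth. A secondary technical point is the geometric step: in the general Liouville (non-cotangent) setting one must actually produce the foliation by admissible Lagrangians near $L$ and justify the coarea bound with all constants uniform in $m$. The reduction to $\tau^{km}$ is formal once $H^1_c(W;\R)=0$ is invoked.
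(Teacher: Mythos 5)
Your proposal is correct and follows essentially the same route as the paper: reduce to $\tau^{km}$ via vanishing of flux (using $H^1_c(W;\R)=0$), produce a parametrized family of disjoint admissible Lagrangian balls near $L$ (the paper's Lemma \ref{family of Lag}, proved by a Lagrangian neighborhood theorem), identify $\LFH_*(L,\tau^{mk}(L))$ with the filtered group $\WFH_*^{<mk+\epsilon}(L;W)$ via properties \ref{p1}--\ref{p2} so that linear growth gives an intersection count $\gtrsim m$ with every leaf, and convert this into a volume bound by a Fubini/coarea argument over the family. The only cosmetic difference is that you dispose of non-transverse intersections by Sard-genericity of the leaf inside the coarea integral, whereas the paper perturbs $\phi$ by a $C^\infty$-approximating sequence representing the same class in $\pi_0(\symp^c(\widehat W))$ and passes to the limit of volumes.
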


In other words, under the existence of a Lagrangian in the theorem, every compactly supported symplectomorphism which is compactly supported symplectically isotopic to (some power of) fibered twists has $n$-dimensional slow volume growth larger than or equal to 1. 

In fact, if $\WFH_*(L; W)$ is infinite dimensional, which is obviously true when $\WFH_*(L; W)$ has a linear growth, the components of $\tau^k$'s in $\symp^c(\widehat{W})$ are all distinct to each other:

\begin{theorem}\label{ThmB}
Let $(W, \lda)$ be as in Theorem \ref{ThmA} and let $\tau:\widehat{W}\to \widehat{W}$ be a fibered twist. Assume that $H_c^1(W;\R)=0$. If there exists an admissible Lagrangian $L$ in $W$ such that $\WFH_*(L;W)$ is infinite dimensional, then $[\tau]$ represents a class of infinite order in $\pi_0(\symp^c(\widehat{W}))$. 
\end{theorem}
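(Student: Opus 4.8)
The plan is to run a contradiction argument in the spirit of \cite{FS05}: a hypothetical finite order of $[\tau]$ would force the Lagrangian Floer homology of $L$ with its $\tau$-iterates to be uniformly bounded, whereas an infinite-dimensional $\WFH_*(L;W)$ makes it grow without bound.

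First I would suppose that $[\tau]$ has finite order, so that $[\tau^{k}]=[\id]$ in $\pi_0(\symp^c(\widehat W))$ for some $k\ge 1$; i.e. $\tau^{k}$ is compactly supported symplectically isotopic to the identity. The flux of such an isotopy lies in $H^1_c(\widehat W;\R)\cong H^1_c(W;\R)=0$, hence vanishes, so by a standard flux argument (Banyaga) $\tau^{k}\in\ham^c(\widehat W)$, and therefore $\tau^{km}\in\ham^c(\widehat W)$ for every $m\ge 1$. In particular $\tau^{km}(L)$ is Hamiltonian isotopic to $L$ through a compactly supported isotopy, and since $\tau$ itself is compactly supported, $\tau^{km}(L)$ agrees with $L$ outside a compact set.

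Next I would compare two ways of measuring $\tau^{j}(L)$. On the one hand, for any $\phi\in\symp^c(\widehat W)$ the Lagrangian $\phi(L)$ coincides with $L$ at infinity, so a small Reeb-type perturbation near the boundary gives a well-defined Floer homology $HF_*(L,\phi(L))$ that is invariant under compactly supported Hamiltonian isotopies of $\phi$; hence by the previous paragraph
\[
HF_*(L,\tau^{km}(L))\;\cong\;HF_*(L,L)\qquad(m\ge 1),
\]
and $HF_*(L,L)$ is finite dimensional, being the self-Floer homology of the ball $L$. On the other hand, the fibered twist acts near $\partial W$ as (a model of) the time-$jT$ Reeb flow, $T$ the minimal period, so that $\tau^{j}(L)$ wraps $j$ times and the chord-count computing $HF_*(L,\tau^{j}(L))$ is precisely the one computing wrapped Floer homology truncated in an action window of width $\sim jT$. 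Using the identification already exploited in the proof of Theorem~\ref{ThmA}, one obtains for every $j\ge 1$
\[
HF_*(L,\tau^{j}(L))\;\cong\;\WFH^{(-\epsilon,\,jT+\epsilon)}_*(L;W)
\]
for a fixed small $\epsilon>0$. Since $\WFH_*(L;W)=\varinjlim_{a\to\infty}\WFH^{(-\epsilon,a)}_*(L;W)$ is infinite dimensional, the dimensions of these truncations are (cofinally) unbounded; taking $j=km$ with $m$ large then makes $\dim HF_*(L,\tau^{km}(L))$ exceed $\dim HF_*(L,L)$, contradicting the displayed isomorphism. Hence no nonzero power of $\tau$ is isotopic to the identity, i.e. $[\tau]$ has infinite order.

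Routine points here are the identification $H^1_c(\widehat W;\R)\cong H^1_c(W;\R)$, the well-definedness and Hamiltonian-invariance of $HF_*(L,\phi(L))$ for compactly supported $\phi$, and the finiteness of $HF_*(L,L)$ for a Lagrangian ball. \textbf{The main obstacle is the identification $HF_*(L,\tau^{j}(L))\cong\WFH^{(-\epsilon,jT+\epsilon)}_*(L;W)$}: one must pin down, up to compactly supported Hamiltonian isotopy, the image $\tau^{j}(L)$ near the boundary in terms of the time-$jT$ Reeb flow, and then match the Floer data of the pair $(L,\tau^j(L))$ with a truncated wrapped Floer complex via a neck-stretching/continuation argument, together with the bookkeeping showing the truncated dimensions are unbounded in $j$. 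I expect this input to be supplied by Section~\ref{WFH} and the proof of Theorem~\ref{ThmA}, leaving only the short contradiction above.
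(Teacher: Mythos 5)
Your proposal is correct and is essentially the paper's own argument: the paper proves the contrapositive by taking the cofinal family $H_N=N\cdot H_\tau+D$ (with $D$ of small slope), noting $Fl_1^{X_{H_N}}=\tau^N\circ Fl_1^{X_D}$, and using the properties \ref{p1}--\ref{p4} together with $\ham^c(\widehat{W})=\symp_0^c(\widehat{W})$ (from $H^1_c(W;\R)=0$) to identify every filtered group $\WFH_*(L;H_N)\cong\LFH_*(L,\tau^N\circ Fl_1^{X_D}(L))\cong H_*(L,\partial L)$, so the direct limit gives $\dim\WFH_*(L;W)\le\dim H_*(L,\partial L)<\infty$ --- which is exactly your truncated-action-window contradiction, and the identification you flag as the main obstacle is precisely what \ref{p1} and \ref{p2} supply. (Minor point: the Lagrangian $L$ in Theorem \ref{ThmB} need not be a ball, but the finiteness of $H_*(L,\partial L)$ that your argument needs holds for any admissible $L$.)
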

We mention that a similar result using symplectic homology can be found in \cite[Corollary 1.2]{Igo} and Theorem \ref{ThmB} can be regarded as the relative version of that result.

We mainly study two classes of examples that Theorem \ref{ThmA} and Theorem \ref{ThmB} apply; \emph{Milnor fibers of $A_k$-type singularities} in Section \ref{sec: A_k} and \emph{complements of symplectic hypersurfaces} in Section \ref{sec: chyper}. 

A Milnor fiber $V$ of an $A_k$-type singularity is a regular level set of a polynomial of the form $f(z) = z_0^{k+1} + z_1^2 + \cdots + z_n^2$ in $\C^{n+1}$. Milnor fibers can be seen as a completion of a Liouville domain $W : = V \cap B^{2n+2}$. The contact type boundary $\partial W$ admits a canonical periodic Reeb flow, which gives rise to a fibered twist $\tau$ on $V$. We give a uniform lower bound of the $n$-dimensional slow volume growth of the component of the fibered twists $[\tau] \in \pi_0(\symp^c(V))$.
 
For $k=1$, the fibered twist on $V$ is the same as the twist on the cotangent bundle $T^*S^n$ studied in \cite{FS05}. For general $k$, it is well-known that the Milnor fiber is Stein deformation equivalent to the $k$-fold linear plumbing of $T^*S^n$'s, see \cite[Section 3.3]{Sei3}. Our result can be seen as a partial generalization of the results in \cite{FS05} to the linear plumbings. Let us also point out that for $n$ even, the fibered twists on $A_k$-Milnor fibers are \emph{smoothly} isotopic to the identity. In this sense, the (positive) uniform lower bound of the slow volume growth captures a genuine symplectic phenomenon for those cases.

A nice feature of the Milnor fibers for our purpose is that they are \emph{real} Liouville domains; they admit a canonical \emph{anti-symplectic involution} (\ref{eq: conjmap}), essentially given by the complex conjugate. Its fixed point set is then a Lagrangian, called a \emph{real Lagrangian}. It turns out that (a connected component of)  the fixed point set is an admissible Lagrangian and is diffeomorphic to the ball. This allows us to apply Theorem \ref{ThmA} and \ref{ThmB}. Moreover we can compute the wrapped Floer homology groups $\WFH_*(L;W)$ explicitly, see Corollary \ref{cor: WFHcomp}.

Another interesting examples we consider are complements of a symplectic hypersurface in a closed symplectic manifold. They are Liouville domains where the boundary forms a \emph{prequantization bundle} in the sense that all Reeb orbits are periodic with the same period. Let $M$ be a closed symplectic manifold and $Q \subset M$ a symplectic hypersurface. We additionally give an anti-symplectic involution on $M$, and $Q\subset M$ is chosen to be invariant under the involution. The complement $W=M\setminus \nu_M(Q)$ of a neighborhood of $Q$ in $M$ is then a real Liouville domain (Proposition \ref{comp_prop}). For computational purposes, we further assume technical setup given in Section \ref{comp_setup}, see \cite{Di} and \cite[Section 4.2]{Ka} for similar setups. 

We provide two explicit examples of the setup; a complement of a smooth complex hypersurface of degree $k$ in the complex projective space $\C P^n$ in Section \ref{example_cp} and a complement of a hyperplane section in a complex hypersurface in Section \ref{sec: comp_ex2}. Similarly to the Milnor fiber case, it turns out that (a connected component of) the real Lagrangian in the complements is diffeomorphic to the ball, and its wrapped Floer homology group can be computed, see Corollary \ref{comp_exam1cor} and Corollary \ref{comp_exam2cor}. Since it has a linear growth, we can apply Theorem \ref{ThmA} and \ref{ThmB}. We consequently obtain a uniform lower bound of the $n$-dimensional slow volume growth of the components of (a power of) a fibered twist on the complements, and we also find that they have infinite order in $\pi_0(\symp^c(\widehat{W}))$.


Our main tool for computations of wrapped Floer homology is a version of a Morse-Bott spectral sequence, stated in Section \ref{sec: MBss}. Since the spectral sequence is constructed using an action filtration, it tells us not only about the group structure but about the limit (\ref{growthrateintro}). Looking at periodicity of the first page together with the resulting homology, we obtain a linear growth of wrapped Floer homology. 

In practice, it is necessary to compute Maslov indices of Morse-Bott component of Reeb chords. We do this in a general setting of real Liouville domains with a periodic Reeb flow. In Proposition \ref{lem: indrel}, we give a relation between Maslov indices of periodic Reeb orbits and its \emph{half} Reeb chords when the period of Reeb orbits are given by the minimal common period.



\subsection*{Acknowledgments} We deeply thank to Otto van Koert for helpful suggestions and discussions. The first author thanks to Urs Frauenfelder and University of Augsburg for warm hospitality. He was partially supported by the NRF Grant NRF-2016R1C1B2007662 funded by the Korean government. The second author was partially supported by SFB/TRR 191 - Symplectic Structures in Geometry, Algebra and Dynamics. The third author was partially supported by the European Research Council (ERC) under the European Union's Horizon 2020 research and innovation programme, starting grant No. 637386


\section{Floer theory}
\subsection{Wrapped Floer homology}\label{WFH}

We give a definition of wrapped Floer homology, which is basically an ``open-string analogue'' of symplectic homology. We refer to \cite[Section 3]{AboSei} and \cite[Section 4]{Rit} for more detailed description. Note that we always use $\Z_2$-coefficients in this paper.

Let $(W^{2n}, \lambda)$ be a Liouville domain with a Liouville form $\lambda$. The Liouville vector field $X_\lambda$ of $\lambda$ is given by $d\lambda(X_\lambda,\cdot)=\lambda$ and $X_\lambda$ is pointing outwards along the boundary. The restriction $\alpha:=\lambda|_{\partial W}$ is a contact form on the boundary $\p W$. We write $R_\alpha$ for the Reeb vector field on $\p W$ characterized by the conditions $d\alpha(R_\alpha,\cdot)=0$ and $\alpha(R_\alpha)=1$. We define the {\it completion} $\widehat{W}$ of a Liouville domain $W$ by $\widehat{W}:=W \cup_{\partial W} \left([1, \infty) \times \partial W \right)$ where we attach the cylindrical part $[1, \infty) \times \partial W$ to $W$ along the boundary using the Liouville flow. We also extend the Liouville form $\lambda$ by
$$
\widehat{\lambda}=\begin{cases}  \lambda & \mbox{ in } W, \\ r \alpha & \mbox{ in } [1, \infty) \times \partial W,\end{cases}
$$
where $r$ stands for the coordinate on $[1, \infty)$.
Note that the Liouville vector field $X_{\widehat{\lambda}}$ restricts to $r \p_r$ on $[1,\infty)\times \p W$.
\begin{Def}\label{def: admLag}
A Lagrangian $L$ in $W$ is called {\it admissible} if
\begin{itemize}
	\item $L$ is transverse to the boundary $\partial W$ and the intersection $\p L=L \cap \partial W$ is a Legendrian,
	\item $L$ is exact, i.e., $\lambda|_L$ is an exact 1-form,
	\item the Liouville vector field of $\lambda$ is tangent to $L$ near its  boundary. 
\end{itemize}
\end{Def}
Since the Liouville vector field is tangent to $L$ near its boundary, we can complete an admissible Lagrangian $L$ to a noncompact Lagrangian $\widehat{L}=L \cup_{\partial L} \left([1, \infty) \times \partial L \right)$ in $\widehat{W}$. 

Let $H:\widehat{W}\to \R$ be a Hamiltonian and $X_H$ the associated Hamiltonian vector field defined by $d\widehat{\lambda}(X_H,\cdot)=dH$. We denote by $Fl^{X_H}_t:\widehat{W}\to \widehat{W}$ the Hamiltonian flow of $X_H$. A trajectory $x: [0,1] \rightarrow \widehat W$ of the Hamiltonian vector field $X_H$ such that $x(0), x(1) \in \widehat L$ is called a \textit{Hamiltonian chord} of $H$. The Hamiltonian chords corresponds to the intersections $L\cap Fl_1^{X_H}(L)$. We say that a Hamiltonian chord $x$ is \textit{contractible} if the class $[x] \in \pi_1(\widehat W, \widehat L)$ is trivial. We denote by $\mathcal{P}(H)$ the set of all contractible Hamiltonian chords of $H$.
\begin{Def}
	A Hamiltonian chord $x$ is called {\it nondegenerate} if 
	$$
	TFl_1^{X_H}(T_{x(0)}\widehat{L})\cap T_{x(1)}\widehat{L}=\{0\}.
	$$
	A Hamiltonian $H:\widehat{W}\to \R$ is called {\it nondegenerate} if all Hamiltonian chords of $H$ are nondegenerate, i.e., $\widehat{L}$ and $Fl_1^{X_H}(\widehat{L})$ intersect transversally.
\end{Def}
A {\it Reeb chord} of period $T>0$ is a trajectory $c:[0,T]\to \p W$ of the Reeb vector field $R_\alpha$ such that $c(0),c(T)\in \p L$.  We denote by $\Spec(\p W,\alpha,\p L)$ the set of all periods of Reeb chords.
\begin{Def}
A time-independent Hamiltonian $H: \widehat W \rightarrow \R$ is called \textit{admissible} if  $H(r, x) = ar+b$ on the symplectization $([1,\infty)\times \p W,r\alpha)$ with $a \not \in \Spec(\partial W, \alpha, \partial L)$. Here $a$ is called the \textit{slope} of $H$.
\end{Def}
For any admissible Hamiltonian $H$, it is known that a generic perturbation of $H$ (while fixing the slope) is nondegenerate, see \cite[Lemma 8.1]{AboSei}. Since the slope of $H$ does not lie in $\Spec(\p W,\alpha,\p L)$, all Hamiltonian chords are contained in a compact domain $W$ in $\widehat{W}$. 

Under the assumption that the Maslov class $\mu_{\widehat{L}}:\pi_2(\widehat{W},\widehat{L})\to \Z$ of a Lagrangian $\widehat{L}$ vanishes, we can assign the Maslov index of a contractible Hamiltonian chord $x:[0,1]\to \widehat{W}$ as follows. Let $D^+=\{z\in \mathbb{C} \ |\  |z|\le 1,\ \im z\ge 0\}$ be a half-disk and let $D^+_\R:=D^+\cap\R$ be the real line in $D^+$. A smooth map $v:(D^+,D^+_\R)\to (\widehat{W},\widehat{L})$ is called a {\it capping half-disk} of a Hamiltonian chord $x:[0,1]\to \widehat{W}$ if $v(e^{\pi it})=x(t)$ for $t\in[0,1]$ and $v(D^+_\R)\subset \widehat{L}$. Let $\Lambda_\text{hor}^n=\{z\in \C^n \ |\ \im z=0\}$ be the horizontal Lagrangian subspace in $(\C^n,\ow_{\text{std}}:=\frac{i}{2}\sum_jdz_j\wedge d\bar{z}_j)$. Note that a capping half-disk $v:D^+\to \widehat{W}$ of $x$ yields a  symplectic trivialization of $v^*T\widehat{W}$
$$
\phi_v:(v^*T\widehat{W},d\widehat{\lambda})\longrightarrow D^+\times (\mathbb{C}^n,\ow_\text{std})
$$
such that $\phi_v(T_{v(z)}\widehat{L})=\Lambda_\text{hor}^n$ for $z\in D^+_\R$. We call such a trivialization $\phi_v$ an {\it adapted symplectic trivialization} of $v^*T\widehat{W}$. Note that $\phi_v$ is unique up to homotopy of adapted symplectic trivializations and a multiplication by some fixed matrix in $\Sp(2n)^{\Lambda_\text{hor}^n}:=\{\Psi\in \Sp(2n)\ |\ \Psi\Lambda_\text{hor}^n=\Lambda_\text{hor}^n \}$ which is not contained in the identity component of $\Sp(2n)^{\Lambda_\text{hor}^n}$. We abbreviate
$$
\Psi_x:[0,1]\to \Sp(2n),\quad \Psi_x(t):=\phi_v(x(t))\circ TFl_t^{X_H}(x(0))\circ \phi_v^{-1}(x(0))
$$
the linearization of the Hamiltonian flow along $x$ with respect to $\phi_v$. Then, we obtain a path of Lagrangian subspaces in $(\C^n,\ow_\text{std})$,
$$
t\in [0,1]\longmapsto \Psi_x\Lambda_\text{hor}^n\in \mathcal{L}(n),
$$
where $\mathcal{L}(n)$ is the space of Lagrangian subspaces in $\C^n$.
In the following, $\mu_{RS}$ denotes the Robbin-Salamon index defined in \cite{RS}.
\begin{Def}
The {\it Maslov index} of a contractible Hamiltonian chord $x:[0,1]\to \widehat{W}$ is defined by
	$$
	\mu(x):=\mu_{RS}(\Psi_x\Lambda_\text{hor}^n,\Lambda_\text{hor}^n).
	$$
\end{Def}
\begin{Rem}\label{rem: hamindex}
By property of stratum homotopy of Robbin-Salamon index \cite[Theorem 2.4]{RS}, $\mu(x)$ is independent to the choice of adapted symplectic trivializations.
\end{Rem}
We want to show that $\mu(x)$ does not depend on the choice of capping half-disk. The following lemma seems to be known, but we could not find any references. Hence, we give a detailed proof. 

\begin{Lem}\label{lem: cappingind}
Let $u,v:D^+\to \widehat{W}$ be capping half-disks of $x$. Then  
we have
$$
\mu(x;u)-\mu(x;v)=\mu_{\widehat{L}}([u\#\overline{v}]),
$$
where $u\#\overline{v}:(D^2,\p D^2)\to (\widehat{W},\widehat{L})$ is a disk obtained by gluing $u$ and $\overline{v}$ along $x(t)$. Here $\overline{v}$ is a capping half-disk with a reversed orientation of $v$.
\end{Lem}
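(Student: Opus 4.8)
The plan is to track how the Maslov index reacts to replacing the capping half-disk $v$ by $u$, to recognize the resulting discrepancy as the Maslov number of a loop of Lagrangian subspaces, and to identify that loop with $\mu_{\widehat L}([u\#\overline{v}])$. First I would compare the two adapted trivializations along the chord. Restricting $\phi_u$ and $\phi_v$ to the boundary arc $\{e^{\pi it}:t\in[0,1]\}$, along which both parametrize $x$, gives two symplectic trivializations of $x^*T\widehat W$, hence a path $C(t):=\phi_v(x(t))\circ\phi_u(x(t))^{-1}$ in $\Sp(2n)$. Since $x(0),x(1)$ lie on $D^+_\R$ and $\phi_u,\phi_v$ are adapted there, $C(0),C(1)\in\Sp(2n)^{\Lambda_\text{hor}^n}$, so $\gamma(t):=C(t)^{-1}\Lambda_\text{hor}^n$ is a loop in $\mathcal{L}(n)$ based at $\Lambda_\text{hor}^n$. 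A direct computation gives $\Psi_x^v(t)=C(t)\,\Psi_x^u(t)\,C(0)^{-1}$, hence $\Psi_x^v(t)\Lambda_\text{hor}^n=C(t)\bigl(\Psi_x^u(t)\Lambda_\text{hor}^n\bigr)$; put differently, $\mu(x;u)$ and $\mu(x;v)$ both compute the Robbin--Salamon index of the single intrinsic path of Lagrangian subspaces of $x^*T\widehat W$ obtained by pushing $T_{x(0)}\widehat L$ forward along the linearized Hamiltonian flow, only read off through two different trivializations.

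Next I would invoke the formal properties of $\mu_{RS}$ from \cite{RS}. By naturality under symplectic conjugation, $\mu(x;v)=\mu_{RS}(\Psi_x^u\Lambda_\text{hor}^n,\gamma)$, while $\mu(x;u)=\mu_{RS}(\Psi_x^u\Lambda_\text{hor}^n,\Lambda_\text{hor}^n)$ with $\Lambda_\text{hor}^n$ the constant path. As $\gamma$ and this constant path share their endpoints, catenation and the reversal property yield
$$
\mu(x;u)-\mu(x;v)=\mu_{RS}\bigl(\Lambda*\overline{\Lambda},\ \Lambda_\text{hor}^n*\overline{\gamma}\bigr),\qquad \Lambda:=\Psi_x^u\Lambda_\text{hor}^n,
$$
where $\overline{(\cdot)}$ denotes reversal of the parametrization; and since $\Lambda*\overline{\Lambda}$ is a loop that is null-homotopic relative to its endpoints while $\Lambda_\text{hor}^n*\overline{\gamma}$ is a loop, homotopy invariance, skew-symmetry and the loop property of $\mu_{RS}$ (its index along a loop of Lagrangians is an integer equal to the Maslov number of that loop, independent of the reference Lagrangian) reduce the right-hand side, up to the overall sign fixed by the conventions, to the Maslov number $\mu(\gamma)$ of $\gamma$.

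Finally I would identify $\mu(\gamma)$ with $\mu_{\widehat L}([u\#\overline{v}])$. On the disk $D^2$ underlying $u\#\overline{v}$, take the trivialization equal to $\phi_u$ on the half carrying $u$ and, on the half carrying $\overline{v}$, the one obtained from $\phi_v$ by a homotopy supported on that half that makes it agree with $\phi_u$ along the common arc (the image of $x$); this is an honest symplectic trivialization of $(u\#\overline{v})^*T\widehat W$ over $D^2$. Restricted to $\partial D^2$, the induced loop of Lagrangian subspaces is constantly $\Lambda_\text{hor}^n$ along the $u$-part of the boundary, where $\phi_u$ is adapted, and along the $\overline{v}$-part it is a loop whose Maslov number agrees, up to the sign fixed by the gluing convention, with $\mu(\gamma)$ (the homotopy used on the $\overline{v}$-half restricts to a contractible loop of symplectic matrices on $\partial D^2$, which does not change the Maslov number). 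By the very definition of the Maslov class this loop represents $\mu_{\widehat L}([u\#\overline{v}])$, which gives the asserted identity; in particular, under the standing hypothesis $\mu_{\widehat L}=0$, this shows that $\mu(x)$ is independent of the capping.

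The step I expect to be the real obstacle is the sign and normalization bookkeeping running through all of this: one has to pin down the orientation convention for $u\#\overline{v}$, the signs in the reversal and skew-symmetry properties of $\mu_{RS}$, and the normalization of the Maslov number of a Lagrangian loop, consistently enough that the formula emerges with a $+$ sign rather than a $-$ sign. One should also check carefully the behaviour of the endpoint (boundary) crossings at $t=0,1$ when applying the loop property of $\mu_{RS}$. The remaining points—smoothing the glued disk $u\#\overline{v}$ and the glued trivialization near the corners—are routine.
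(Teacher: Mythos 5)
Your proposal is correct and takes essentially the same route as the paper: both reduce $\mu(x;u)-\mu(x;v)$ to the Robbin--Salamon index of the transition loop $t\mapsto \phi_u(x(t))\circ\phi_v^{-1}(x(t))\Lambda_{\text{hor}}^n$ relative to $\Lambda_{\text{hor}}^n$ (the paper quotes \cite[Corollary 3.14]{FK} for this step, while you rederive it from the naturality, catenation, reversal and loop properties of $\mu_{RS}$), and then identify this quantity with $\mu_{\widehat{L}}([u\#\overline{v}])$ by homotoping $\phi_v$ so that it agrees with $\phi_u$ along $x$ and gluing the two trivializations over $D^2$. The remaining sign and corner bookkeeping you flag is exactly the kind of routine verification the paper handles via its explicit homotopy of concatenated Lagrangian loops $\Psi_s$.
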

\begin{proof}
Let $\phi_u$ and $\phi_v$ be adapted symplectic trivializations of $u^*T\widehat{W}$ and $v^*T\widehat{W}$, respectively. For simplicity, write $\Lambda=\Lambda_\text{hor}^n$.
By \cite[Corollary 3.14]{FK}, we obtain
$$
\mu(x;u)-\mu(x;v)=\mu_{RS}(\phi_u(t)\circ \phi_v^{-1}(t)\Lambda,\Lambda),
$$
where $\phi_u(t)\circ \phi_v^{-1}(t):=\phi_u(x(t))\circ \phi_v^{-1}(x(t)):[0,1]\to \Sp(2n)$ is a path of symplectic matrices given by the transition matrices along $x(t)$. It remains to prove that
$$
\mu_{RS}(\phi_u(t)\circ \phi_v^{-1}(t)\Lambda,\Lambda)=\mu_{\widehat{L}}([u\# \overline{v}]).
$$
To see that, choose a homotopy of symplectic trivializations of $v^*T\widehat{W}$,
$$
\phi^s_v:v^*T\widehat{W}\to D^+\times \C^n,\quad s\in [0,1]
$$
such that $\phi^0_v=\phi_v$, $\phi^1_v(x(t))=\phi_u(x(t))$ for $t\in [0,1]$, and $\phi^s_v(T_{x(0)}L)=\Lambda$ for all $s\in [0,1]$. Let $\psi_v:=\phi^1_v$ be the symplectic trivialization of $v^*T\widehat{W}$. Write $\sigma:=u\#\overline{v}:(D^2,\p D^2)\to (\widehat{W},\widehat{L})$. We can glue trivializations $\phi_u$ and $\psi_v$ along $x(t)$ so that we obtain a symplectic trivialization
$$
\phi_\sigma:\sigma^*T\widehat{W}\to D^2\times \C^n.
$$
We consider paths of Lagrangian subspaces
\begin{eqnarray*}
\overline{\Phi}^s_v:[0,1]\to \mathcal{L}(n),&& \quad t\mapsto \phi_v^s(T_{v(1-2t)}L),\quad s\in [0,1] \\
{\Phi}_u:[0,1]\to \mathcal{L}(n),&& \quad t\mapsto\phi_u(T_{v(2t-1)}L)	.
\end{eqnarray*}
In the below, the notation $\#$ means a concatenation of paths of Lagrangian subspaces. Note that
$$
\Psi_s:=[\phi_u(t)\circ ({\phi_v^s})^{-1}(t)\Lambda ]\# \overline{\Phi}_v^s\# \Phi_u,\quad s\in [0,1]
$$
is a homotopy of loops of Lagrangian subspaces.
Since $\overline{\Phi}^{0}_v\#\Phi_u$ is constant, we observe that
$$
\mu_{RS}(\phi_u(t)\circ \phi_v^{-1}(t)\Lambda,\Lambda)=\mu_{RS}(\Psi_0,\Lambda).
$$
By definition of Maslov class, we get
$
\mu_{\widehat{L}}([\sigma])=\mu_{RS}(\phi_\sigma(T_{\p \sigma(t)}L),\Lambda)=\mu_{RS}(\Psi_1,\Lambda).
$
\end{proof}
As a result, the Maslov index of a contractible Hamiltonian chord is well-defined provided that the Maslov class $\mu_{\widehat{L}}$ vanishes.
\begin{Rem}\label{rem: topcondmas}
In this paper, we will use the following topological assumption:
$c_1(W)$ vanishes on $\pi_2(W)$ and $H_1(L)=0$. This implies that the Maslov class $\mu_{\widehat{L}}$ vanishes. See \cite[Lemma 2.1]{Oh1}.
\end{Rem}

Recall that $\widehat{L}$ is exact, so $\widehat{\lambda}|_{\widehat{L}}=df$ for some $f\in C^\infty(\widehat{L})$. The {\it action functional} of $H$ is defined by
\begin{eqnarray*}
\mathcal{A}_H(x) &=& -\int_{D^+}v^*d\widehat{\lambda}-\int_0^1 H(x(t))dt \\
&=& f(x(1))-f(x(0))-\int_0^1x^*\widehat{\lambda}-\int_0^1H(x(t))dt,
\end{eqnarray*}
where $v:D^+\to \widehat{W}$ is a capping half-disk of a contractible path $x:[0,1]\to \widehat{W}$ such that $x(0),x(1)\in \widehat{L}$. Then, the critical points of $\mathcal{A}_H$ are exactly given by  contractible Hamiltonian chords.

Now, we assume that $H$ is a nondegenerate admissible Hamiltonian and $J=\{J_t\}_{t\in[0,1]}$ is a smooth family of compatible almost complex structures on $\widehat{W}$ which are of {\it contact type} for large $r\gg1$, see \cite[Section 3.2]{AboSei} for definition. With this choice of almost complex structures, we can apply the maximum principle, see \cite[Lemma 4.4]{Rit}.
\begin{Rem}
	One can also start with an admissible Hamiltonian and perturb it to being nondegenerate. It is known that the resulting homology will not depend on the generic perturbation.
\end{Rem}
For $x,y\in \mathcal{P}(H)$ with $x\ne y$, we define the {\it moduli space of Floer strips} from $x$ to $y$ as
\begin{eqnarray*}
\mathcal{M}(x,y)=\{u:\R\times [0,1]\to \widehat{W} &|&  \p_s u+J_t(\p_t u -X_H(u))=0,  \\
&& \lim_{s\to -\infty}u(s,t)=x(t),\\
&& \lim_{s\to \infty}u(s,t)=y(t),\\
&& u(s,0),u(s,1)\in \widehat{L}\} /\R,
\end{eqnarray*}
where the group $\R$ acts on $\mathcal{M}(x,y)$ by the translations in the $s$-variable. Note that the action decreases along Floer strips. It is known that $\mathcal{M}(x,y)$ is a smooth manifold of dimension $|x|-|y|-1$ for generic $J$, see \cite{FHS} and \cite{RS2}.

For $x\in \mathcal{P}(H)$, we define the {\it index} of $x$ by $|x|=\mu(x)-\frac{n}{2}$. If $x$ is nondegenerate, then we have $|x|\in \Z$. Let $c\in \R\cup \{\infty\}$. The {\it wrapped Floer chain group} is defined by
$$
\WFC_*^{<c}(L;H)=\bigoplus_{\substack{ x\in \mathcal{P}(H)\text{ with }|x|=* \\ \mathcal{A}_H(x)<c}}\Z_2\langle x\rangle
$$
with the {\it boundary map}
$$
\p: \WFC_*^{<c}(L;H)\to \WFC_{*-1}^{<c}(L;H), \quad \p(x)=\sum_{|x|-|y|=1}\#_{\Z_2}\mathcal{M}(x,y)\cdot y.
$$
Here, $\#_{\Z_2}\mathcal{M}(x,y)$ means the number of elements in $\mathcal{M}(x,y)$ mod 2.
For generic choice of $J$, the boundary map is well-defined and $\partial^2=0$. We define the {\it wrapped Floer homology of a Hamiltonian $H$} as the homology of the chain complex
$$
\WFH_*^{<c}(L;H)=H_*(\WFC_*^{<c}(L;H),\partial).
$$
For generic pairs $(H_\pm, J_\pm)$ with $H_-\le H_+$, we define a chain map
$$
\Phi^{(H_-,H_+)}:\WFC_*^{<c}(L;H_-)\to \WFC_*^{<c}(L;H_+)
$$
by counting certain $s$-dependent Floer strips, see \cite[Section 3.2]{Rit} for the construction. This induces a {\it continuation map} $\Phi_*^{(H_-,H_+)}:\WFH_*^{<c}(L;H_-)\to \WFH_*^{<c}(L;H_+)$. Note that continuation maps satisfy the functorial property as follows: for any admissible Hamiltonians $H_0\le H_1\le H_2$, we have
$$
\Phi^{(H_1,H_2)}_*\circ \Phi^{(H_0,H_1)}_*=\Phi^{(H_0,H_2)}_* \quad \text{and}\quad
\Phi_*^{(H_0,H_0)}=\id.
$$
A standard argument in Floer theory shows that $\WFH_*^{<c}(L;H)$ does not depend on the choice of $J$ and it only depends on the slope of $H$, see \cite[Lemma 3.1]{Rit}. Finally, we define the {\it wrapped Floer homology groups of a Lagrangian $L$} to be
$$
\WFH_*^{<c}(L;W):=\lim_{\underset{H}{\longrightarrow}} \WFH_*^{<c}(L;H),
$$
where the direct limit is taken over admissible Hamiltonians whose slope diverges to $\infty$. If $c=\infty$, then we set $\WFH_*(L;W):=\WFH_*^{<\infty}(L;W)$.

\subsection{Morse-Bott spectral sequence in wrapped Floer homology} \label{sec: MBss}
Our computational tool for the wrapped Floer homology group $\WFH_*(L;W)$ is a version of Morse-Bott spectral sequences, and it can be described in the setting of \emph{Morse-Bott type Reeb chords}. 
\subsubsection{Maslov index of Reeb chords}
Let $(\Sigma^{2n-1},\alpha)$ be a contact manifold and $\mathcal{L}$ a Legendrian. Under the assumption that the Maslov class $\mu_\mathcal{L}:\pi_2(\Sigma,\mathcal{L})\to \Z$ of a Legendrian $\mathcal{L}$ vanishes, we define Maslov index of contractible Reeb chords which is parallel to the case of Hamiltonian chords as in Section \ref{WFH}. A Reeb chord $c:[0,T]\to \Sigma$ is called \textit{contractible} if $[c]=0\in \pi_1(\Sigma,\mathcal{L})$. A smooth map $v:(D^+,D^+_\R)\to (\Sigma,\mathcal{L})$ is called a {\it capping half-disk} of a Reeb chord $c:[0,T]\to \Sigma$ if $v(e^{\pi it/T})=c(t)$ for $t\in [0,T]$ and $v(D^+_\R)\subset \mathcal{L}$. Let $\Lambda_\text{hor}^{n-1}$ be the horizontal Lagrangian subspace in $(\C^{n-1},\ow_{\text{std}})$. Choose an {\it adapted symplectic trivialization} of $v^*\xi$
$$
\phi_v:(v^*\xi,d\alpha)\longrightarrow D^+\times (\C^{n-1},\ow_\text{std})
$$
meaning that $\phi_v(T_{v(z)}\mathcal{L})=\Lambda_\text{hor}^{n-1}$ for $z\in D^+_\R$. Abbreviating
$$
\Psi_c:[0,T]\to \Sp(2n-2),\quad \Psi_c(t):=\phi_v(x(t))\circ TFl_t^{R_\alpha}|_\xi(c(0))\circ \phi_v^{-1}(x(0))
$$
the linearization of the Reeb flow along $c$ with respect to $\phi_v$, we obtain a path of Lagrangian subspaces in $(\C^{n-1},\ow_\text{std})$,
$$
t\in [0,T]\longmapsto \Psi_c\Lambda_\text{hor}^{n-1}.
$$

\begin{Def}
The \textit{Maslov index} of a contractible Reeb chord $c:[0,T]\to \Sigma$ is defined by
	$$
	\mu(c):=\mu_{RS}(\Psi_c\Lambda_\text{hor}^{n-1},\Lambda_\text{hor}^{n-1}).
	$$
\end{Def}
\begin{Rem}\label{rem: topcondmasReeb}
Since the Maslov class $\mu_\mathcal{L}$ vanishes, $\mu(c)$ is independent of the choices involved. See Remark \ref{rem: hamindex} and Lemma \ref{lem: cappingind}. It is known that $\mu_\mathcal{L}$ vanishes if $c_1(\xi)$ vanishes on $\pi_2(\Sigma)$ and $H_1(\mathcal{L})=0$.
\end{Rem}

\subsubsection{Morse-Bott type Reeb chords}
Let $A,B$ be submanifolds of a smooth manifold $X$. We say that $A$ and $B$ \textit{intersect cleanly along} $A\cap B$ if $A\cap B$ is a submanifold of $X$ and $T_pA\cap T_pB=T_p(A\cap B)$ holds for all $p\in A\cap B$.
\begin{Def}\label{def: morsebott}
Let $(\Sigma,\alpha)$ be a contact manifold and $\mathcal{L}\subset \Sigma$ a Legendrian. We say that Reeb chords are of \textit{Morse-Bott type} if the following holds.
\begin{enumerate}[label=(\arabic*)]
	\item The spectrum of Reeb chords $\text{Spec}(\Sigma,\alpha,\mathcal{L})$ is discrete.
	\item \label{msReebchord} For $T\in \text{Spec}(\Sigma,\alpha,\mathcal{L})$, the set
\begin{equation}\label{eq: mbcp}
\mathcal{L}_T: = \{z \in \mathcal{L} \;|\; Fl_T^{R_{\alpha}}(z) \in \mathcal{L}\}	
\end{equation}
is a (possibly disconnected) closed submanifold of $\mathcal{L}$ such that $Fl_{-T}^{R_\alpha}(\mathcal{L})$ and $\mathcal{L}$ intersect cleanly along $\mathcal{L}_T$, i.e.,
$$
T_z\mathcal{L}_T=T_z(Fl_{-T}^{R_\alpha}(\mathcal{L}))\cap T_z\mathcal{L}=TFl_{-T}^{R_\alpha}(T_w\mathcal{L})\cap T_z\mathcal{L}\quad \text{for $z\in \mathcal{L}_T$ and $w=Fl_T^{R_\alpha}(z)$}.
$$\end{enumerate}
Each connected component of $\mathcal{L}_T$ is called a \textit{Morse-Bott component}. 
\end{Def}
\begin{Rem}
Any point in a Morse-Bott component $\mathcal{L}_T$ gives rise to a corresponding Reeb chord of period $T$ starting at the point. Hence, we can regard $\mathcal{L}_T$ as a family of Reeb chords of period $T$ parametrized by starting points. By a Reeb chord in $\mathcal{L}_T$ we mean that a Reeb chord of period $T$ starting at a point in $\mathcal{L}_T$. We also write $\mathcal{L}_T$ for a Morse-Bott component unless it causes confusion.
\end{Rem}
A Morse-Bott component $\mathcal{L}_T$ is called {\it contractible} if a Reeb chord in $\mathcal{L}_T$ is contractible. Note that every Reeb chord in $\mathcal{L}_T$ is contractible if one of them is so. The {\it Maslov index} of a contractible Morse-Bott component $\mathcal{L}_T$ is defined by
$$
\mu(\mathcal{L}_T):=\mu(c),
$$
where $c$ is a Reeb chord in $\mathcal{L}_T$. This does not depend on the choice of the Reeb chord by property of stratum homotopy \cite[Theorem 2.4]{RS} together with the condition \ref{msReebchord} in Definition \ref{def: morsebott}. 

\subsubsection{Morse-Bott spectral sequence}
Let $(W,\lambda)$ be a Liouville domain and let $L\subset W$ be an admissible Lagrangian. Denote by $\mathcal{L}:=\partial L$ a Legendrian. We assume that Reeb chords are of Morse-Bott type. For well-definedness of indices, we further assume that
\begin{enumerate}[label=(\arabic*)]
	\item \label{ms1} $c_1(W)$ vanishes on $\pi_2(W)$.
	\item \label{ms2} $H_1(L)=H_1(\mathcal{L})=0$.
	\item \label{ms3} The map $i_*:\pi_1(\Sigma,\mathcal{L})\to \pi_1(W,L)$ induced by the inclusion is injective.
\end{enumerate}
\begin{Rem}
The condition \ref{ms1}, \ref{ms2} implies that we have a well-defined $\Z$-grading on $\WFH_*(L;W)$ and Maslov index of Morse-Bott component of Reeb chords are well-defined, see Remark \ref{rem: topcondmas} and \ref{rem: topcondmasReeb}. The condition \ref{ms3} will be used to express the Maslov index of Hamiltonian chords in terms of the Maslov index of Reeb chords, see \cite[Lemma 3.4]{BO} an analogue statement in a symplectic homology.
\end{Rem}
Denote by $\text{Spec}_0(\Sigma,\alpha,\mathcal{L})$ the set of all periods of contractible Reeb chords, and we arrange the spectrum $\text{Spec}_0(\Sigma,\alpha,\mathcal{L})=\{T_1,T_2,\cdots\}$ in such a way that $0<T_1 < T_2 < \cdots$. The following spectral sequence can be constructed using action filtrations adapted to the Morse-Bott setup. 

\begin{Thm}\label{thm: ss}
There is a spectral sequence $(E^r_{pq},d^r)$ converging to $\WFH_*(L;W)$ whose first page is given as follows:
\begin{equation}
\label{eq: MBss}
E^1_{pq} =  \begin{cases} \displaystyle  H_{p+q - \shift(\mathcal{L}_{T_p})+\frac{1}{2}\dim L}(\mathcal{L}_{T_p} ; \Z_2) & p>0, \\ \displaystyle H_{q + \dim L} (L, \mathcal{L}; \Z_2) & p = 0, \\
0 & p < 0.  \end{cases}
\end{equation}
where $\shift(\mathcal{L}_{T_p}) = \mu(\mathcal{L}_{T_p}) - \frac{1}{2} (\dim \mathcal{L}_{T_p} -1)$.
\end{Thm}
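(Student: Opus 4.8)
The plan is to realize $\WFH_*(L;W)$ as the homology of a filtered chain complex whose associated graded is assembled from the Morse--Bott families $\mathcal L_{T_p}$, and to take the spectral sequence of that filtration. Concretely, I would fix a cofinal family of admissible Hamiltonians $H^\nu:\widehat W\to\R$ ($\nu\in\N$) with slopes $a_\nu\notin\Spec(\p W,\alpha,\mathcal L)$ tending to $+\infty$ which, on the end $[1,\infty)\times\p W$, depend only on $r$ and are $C^2$-small perturbations of a convex piecewise-linear ``staircase'' whose slopes run through $T_1<T_2<\cdots$. For such a profile every chord of $H^\nu$ is $C^1$-close to a Reeb chord of some period $T_p\le a_\nu$ (completed by two short corner arcs), so $\mathcal P(H^\nu)$ is partitioned according to which component $\mathcal L_{T_p}$ the underlying Reeb chord lies in, the small-action chords supported near $L$ forming the block $p=0$. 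Since the action of a chord attached to $\mathcal L_{T_p}$ is $-T_p$ up to a controllable error, this partition gives a decreasing filtration of $\WFC_*(L;H^\nu)$ — bounded at each stage $\nu$ — which the Floer differential respects because the action decreases along strips. Choosing the continuation maps $\Phi^{(H^\nu,H^{\nu+1})}$ monotone makes them filtration-preserving, so the telescope $(\mathcal C_*,\p):=\varinjlim_\nu\WFC_*(L;H^\nu)$ is a filtered complex with $H_*(\mathcal C_*,\p)\cong\WFH_*(L;W)$, and its associated spectral sequence is the one claimed.

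Next I would identify the $E^1$-page, i.e.\ the homology of the associated graded. Inside each block one performs the standard Morse--Bott perturbation: add to $H^\nu$ a term localized near $\mathcal L_{T_p}$ built from a Morse function $f_p$ on $\mathcal L_{T_p}$ (for $p=0$, a Morse function on $L$ adapted to the boundary whose Morse complex computes $H_*(L,\p L;\Z_2)$). After the perturbation the chords in block $p$ become nondegenerate and biject with $\crit(f_p)$, and — by the cascades construction, or locally by Po\'zniak's description of Floer homology near the clean intersection $\mathcal L\cap Fl_{-T_p}^{R_\alpha}(\mathcal L)$ that is provided by hypothesis \ref{msReebchord} — for a sufficiently small perturbation the part of $\p$ preserving the filtration degree is exactly the Morse differential of $f_p$. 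Hence $E^1_{pq}\cong H_*(\mathcal L_{T_p};\Z_2)$ for $p>0$ and $E^1_{0q}\cong H_*(L,\p L;\Z_2)$, up to a degree shift, and $E^1_{pq}=0$ for $p<0$ since there are no chords there.

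The degree shift is a Robbin--Salamon index computation. For a chord $x$ in block $p$ over a critical point of $f_p$ of Morse index $\ind_{f_p}(x)$, one splits an adapted trivialization along the Reeb direction, the corner arcs, and the normal (Morse) direction, and uses additivity and stratum homotopy of $\mu_{RS}$ — capping off consistently by means of hypothesis \ref{ms3}, exactly as in \cite[Lemma 3.4]{BO} in the closed-string case — to obtain
\[
\mu(x)=\mu(\mathcal L_{T_p})-\tfrac12\bigl(\dim\mathcal L_{T_p}-1\bigr)+\ind_{f_p}(x),
\]
the extra $\tfrac12$ coming from the half-turn of the $\p_r$-direction on the linear end. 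Since $|x|=\mu(x)-\tfrac n2$ and $\dim L=n$, a Morse index $j$ critical point of $f_p$ lands in $E^1_{pq}$ precisely when $p+q-\shift(\mathcal L_{T_p})+\tfrac12\dim L=j$, which is the formula stated. For convergence one notes that only finitely many blocks meet a given total degree: the lowest degree occurring in block $p$ is $\mu(\mathcal L_{T_p})-\tfrac12(\dim\mathcal L_{T_p}-1)-\tfrac n2$, and as $\dim\mathcal L_{T_p}$ stays bounded while this quantity is forced to $+\infty$ along the discrete, diverging spectrum under the standing index hypotheses, the filtration is bounded below in each degree; hence $(E^r_{pq},d^r)$ converges to $\WFH_*(L;W)$.

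\textbf{Main obstacle.} The analytic heart is the middle step — proving that after a small Morse--Bott perturbation the filtration-preserving part of the Floer differential coincides with the Morse differential on $\mathcal L_{T_p}$. This requires either the full cascades package (transversality and gluing for broken configurations of Floer strips joined by gradient segments) or a careful Po\'zniak-type analysis of local Floer homology at the clean intersection, together with the bookkeeping that the perturbation creates no spurious chords and lets no two action blocks interact. Checking in addition that the continuation maps can be made simultaneously monotone and filtration-preserving, so that the direct limit is genuinely a filtered complex, is routine but indispensable.
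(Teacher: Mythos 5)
Your plan — a cofinal family of staircase-type admissible Hamiltonians, the action filtration whose blocks correspond to the Morse--Bott components, identification of the associated graded by a Morse--Bott perturbation (cascades, or Po\'zniak's local model at the clean intersection), the Robbin--Salamon bookkeeping giving the shift $\mu(\mathcal{L}_{T_p})-\tfrac12(\dim\mathcal{L}_{T_p}-1)$, and passage to the direct limit along filtration-preserving monotone continuation maps — is exactly the construction the paper has in mind: the paper gives no independent argument and instead invokes the action-filtration construction of \cite[Theorem B.11]{KvK} (and \cite{Sei2}) adapted to the open-string setting, and your index formula reproduces the stated $E^1$-page in the paper's grading convention.

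Two points need repair when you write the details. First, a sign: with the paper's convention $d\widehat{\lambda}(X_H,\cdot)=dH$ one has $X_H=-h'(r)R_\alpha$ on the end, and the action of a chord in block $p$ is approximately $+T_p$, not $-T_p$; this is what makes the filtration increase with $p$ and the Floer differential decrease $p$, as required for $d^r\colon E^r_{pq}\to E^r_{p-r,q+r-1}$ (with your sign the induced differential would go the wrong way). Second, and more substantively, your convergence argument is incorrect as stated: the standing hypotheses \ref{ms1}--\ref{ms3} are purely topological and do \emph{not} force $\mu(\mathcal{L}_{T_p})-\tfrac12(\dim\mathcal{L}_{T_p}-1)\to+\infty$; the paper's own application to the homogeneous polynomial $z_0^k+\cdots+z_n^k$ with $k=n-1$ (end of Section 6) is precisely a case where the shifts do not grow and infinitely many columns contribute to the same total degree. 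Fortunately degreewise boundedness is not needed: the filtration has no columns $p<0$ and is exhaustive, so the classical convergence theorem for bounded-below exhaustive filtrations applies to each $\WFC_*(L;H^\nu)$, and since homology commutes with the direct limit along your filtration-preserving continuations, convergence to $\WFH_*(L;W)$ follows. With those adjustments the proposal matches the paper's (cited) proof.
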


\begin{Rem}\label{rem: mbss} \
\begin{itemize}
	\item The differentials are given by $d^r:E^r_{pq} \to E^r_{p-r,q+r-1}.$
	\item Our grading convention for wrapped Floer homology is such that $\WFH_*^{<\epsilon}(L; W) \cong H_{*+n}(L, \mathcal{L})$ for $\epsilon > 0$ sufficiently small.
	\item A detailed description of constructions of Morse-Bott spectral sequences in symplectic homology can be found in \cite[Theorem B.11]{KvK}. See also \cite{Sei2} for a spectral sequence in Lagrangian Floer homology.
\end{itemize}
\end{Rem}

\subsection{Lagrangian Floer homology}
We briefly recall the Lagrangian Floer homology of a pair of Lagrangians in a Liouville domain, based on \cite[Section 5]{MiSei}.  We also refer to \cite[Section 2.4]{FS05}.

Throughout this section, let $(W,\lambda)$ be a Liouville domain and let $L_0,L_1$ be admissible Lagrangians  in $W$ satisfying either $\p L_0=\p L_1$ or $\p L_0\cap \p L_1\ne \emptyset$. Note that $L_0$ and $L_1$ may not intersect transversally. We will consider each case separately.
\subsubsection*{Case 1. $\p L_0 \cap \p L_1=\emptyset$}
We first deform $L_0$ and $L_1$ so that they intersect transversally. More precisely, choose admissible Lagrangians $L_0',L_1'$ such that $L_j$ is exact isotopic to $L_j'$ rel $\p W$, for $j=0,1$, with the property that $L_0'$ and $L_1'$ intersect transversally.  Then, the original construction \cite{Floer} of the Lagrangian Floer homology of a pair $(L_0',L_1')$ works with contact type almost complex structures that guarantees the maximum principle \cite[Lemma 5.5]{MiSei}. We denote by $\LFH_*(L_0',L_1')$ the resulting Lagrangian Floer homology. We then define the {\it Lagrangian Floer homology of a pair} $(L_0,L_1)$ by $\LFH_*(L_0,L_1):=\LFH_*(L_0',L_1')$. It turns out that $\LFH_*(L_0,L_1)$ does not depend on the choices involved. More generally, the following invariance property holds \cite[Proposition 5.10]{MiSei}. See also \cite[Theorem 5.1]{Oh1} for a detailed proof.

\begin{Lem}\label{lem: firstinv}
Let $L_1'$ be an admissible Lagrangian which is exact isotopic to $L_1$ rel $\p W$. Then, we have an isomorphism $\LFH_*(L_0,L_1)\cong \LFH_*(L_0,L_1')$.
\end{Lem}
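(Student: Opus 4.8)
The plan is to reduce, via the very definition of $\LFH_*$ for non-transverse pairs, to an invariance statement for transverse pairs related componentwise by exact Lagrangian isotopies rel $\partial W$, and then to run the standard continuation-map argument, keeping all the data compactly supported in $\widehat{W}$ so that the maximum principle of \cite[Lemma 5.5]{MiSei} remains available. For the reduction: by construction $\LFH_*(L_0,L_1)=\LFH_*(\widetilde{L}_0,\widetilde{L}_1)$ for some transverse pair of admissible Lagrangians with $\widetilde{L}_j$ exact isotopic to $L_j$ rel $\partial W$ ($j=0,1$), and likewise $\LFH_*(L_0,L_1')=\LFH_*(\widetilde{M}_0,\widetilde{M}_1)$ with $\widetilde{M}_0$ exact isotopic to $L_0$ and $\widetilde{M}_1$ exact isotopic to $L_1'$, both rel $\partial W$. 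By hypothesis $L_1'$ is exact isotopic to $L_1$ rel $\partial W$, so concatenating isotopies gives that $\widetilde{M}_1$ is exact isotopic to $\widetilde{L}_1$ rel $\partial W$, and trivially $\widetilde{M}_0$ is exact isotopic to $\widetilde{L}_0$ rel $\partial W$. Hence it suffices to prove: if $(\widetilde{L}_0,\widetilde{L}_1)$ and $(\widetilde{M}_0,\widetilde{M}_1)$ are transverse pairs of admissible Lagrangians with $\widetilde{M}_j$ exact isotopic to $\widetilde{L}_j$ rel $\partial W$ for $j=0,1$, then $\LFH_*(\widetilde{L}_0,\widetilde{L}_1)\cong\LFH_*(\widetilde{M}_0,\widetilde{M}_1)$.

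Next I would realize the two isotopies by compactly supported Hamiltonians. An exact Lagrangian isotopy rel $\partial W$ through admissible Lagrangians $L_s$ is generated by a vector field $X_s$ along $L_s$ for which $(\iota_{X_s}d\widehat{\lambda})|_{L_s}$ is exact, say equal to $dh_s$ for a smooth family $h_s$; since the isotopy fixes $\partial W$ and keeps the Lagrangians cylindrical, $X_s$ vanishes on the cylindrical part of $\widehat{L}_s$, so $h_s$ is locally constant there and may be extended to a compactly supported time-dependent Hamiltonian $H^j_s$ on $\widehat{W}$ whose flow $\psi^j_s$ realizes the isotopy. Thus $\widetilde{M}_j=\psi^j_1(\widetilde{L}_j)$, and for every $s$ the Lagrangian $\psi^j_s(\widetilde{L}_j)$ is admissible and agrees with $\widetilde{L}_j$ outside a fixed compact subset of $\widehat{W}$.

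Finally I would run the continuation argument. Fixing a monotone cutoff $\beta\colon\R\to[0,1]$ with $\beta\equiv 0$ near $-\infty$ and $\beta\equiv 1$ near $+\infty$, one counts isolated Floer strips $u\colon\R\times[0,1]\to\widehat{W}$ with the $s$-dependent boundary conditions $u(s,0)\in\psi^0_{\beta(s)}(\widetilde{L}_0)$ and $u(s,1)\in\psi^1_{\beta(s)}(\widetilde{L}_1)$ (equivalently, after the usual change of variables, a suitable $s$-dependent Hamiltonian perturbation of the Floer equation with fixed boundary). This defines a chain map $CF_*(\widetilde{L}_0,\widetilde{L}_1)\to CF_*(\widetilde{M}_0,\widetilde{M}_1)$; reversing the cutoff gives a chain map in the other direction, and a homotopy-of-homotopies argument shows that the two compositions are chain homotopic to the respective identities, so the induced maps on homology are mutually inverse. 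The only nontrivial input is compactness of these moduli spaces. A uniform energy bound follows from exactness: the energy of $u$ equals a difference of two action values plus an error term bounded by the total variation of $\beta$ times a fixed constant depending on $H^0_s,H^1_s$, hence stays in a fixed finite range. The $C^0$-bound preventing strips from escaping to infinity is the maximum principle, which applies because outside a fixed compact subset of $\widehat{W}$ the boundary conditions and Hamiltonian perturbation are trivial and the almost complex structures are of contact type, so \cite[Lemma 5.5]{MiSei} applies verbatim. The main obstacle, and the point that deserves the most care, is exactly this last step: one must use that the isotopies are rel $\partial W$ — which is what makes the generating Hamiltonians compactly supported and hence keeps the moving boundary conditions equal to $\widetilde{L}_j$ near infinity — to ensure the maximum principle survives the $s$-dependence; the details can be modelled on \cite[Theorem 5.1]{Oh1} and \cite[Proposition 5.10]{MiSei}.
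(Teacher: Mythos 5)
Your proposal is correct and is essentially the paper's own route: the paper offers no independent argument for this lemma but defers to \cite[Proposition 5.10]{MiSei} and \cite[Theorem 5.1]{Oh1}, whose proofs are precisely the continuation-map scheme you describe (reduce to transverse representatives, move the boundary condition along the exact isotopy, get energy bounds from exactness, and confine strips with the maximum principle for contact-type $J$, using that the isotopy is rel $\partial W$ so the boundary conditions are $s$-independent near infinity). The one small imprecision is the claim that the generating Hamiltonian can always be taken literally compactly supported --- if the primitive $h_s$ takes different constants on different ends of $L_s$ this can fail --- but since locally constant functions generate no flow and your argument only uses that the moving Lagrangian boundary conditions are fixed outside a compact set, this does not affect the proof.
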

\subsubsection*{Case 2. $\p L_0 =\p L_1$} 
To reduce this to the case 1, we use a special kind of deformation which is called a \emph{positive} isotopy,  introduced in \cite[Section 5a]{MiSei}.

Let $H:W\to \R$ be a Hamiltonian such that $H(r,x)=ar+b$ on a collar neighborhood $([1-\epsilon,1]\times \p W)\cong \nu_W(\p W)$. The exact isotopy $L_1^t:=Fl_t^{X_H}(L_1)$ induced by the flow of $X_H$ is called a {\it positive isotopy}. Since the Hamiltonian vector field $X_H$ on $[1-\epsilon,1]\times \p W$ is given by $X_H=-aR_\alpha$, it is easy to see that a Lagrangian $L_1^+:=Fl_1^{X_H}(L_1)$ is admissible. We choose a sufficiently small (i.e., the slope of $H$ is small enough) positive exact isotopy $L_1^t$ of $L_1$ such that $\p L_0\cap \p L_1^t=\emptyset$ for all $t\in[0,1]$. In particular, we have $\p L_0\cap\p L_1^+=\emptyset$, and we define $\LFH_*(L_0,L_1):=\LFH_*(L_0,L_1^+)$ following the case 1. 

To show the independence of the choice of the positive deformation $L_1^+$, we use the following generalization of Lemma \ref{lem: firstinv}. See \cite[Lemma 5.11]{MiSei}.
\begin{Lem}\label{lem: secondinv}
	Let $L_0$ be an admissible Lagrangian. Suppose that $(L_1^t)_{t\in[0,1]}$ is an exact isotopy of admissible Lagrangians such that $\p L_0\cap \p L_1^t=\emptyset$ for all $t$. Then, $\LFH_*(L_0,L_1^t)$ is independent of $t$ up to isomorphism.
\end{Lem}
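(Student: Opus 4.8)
The plan is to run the standard continuation-map argument of Lagrangian Floer homology with a moving Lagrangian boundary condition, the whole point being to check that the hypothesis $\p L_0\cap\p L_1^t=\emptyset$ for all $t$ is precisely what guarantees compactness of the moduli spaces involved. It suffices to produce an isomorphism $\LFH_*(L_0,L_1^{t_0})\cong\LFH_*(L_0,L_1^{t_1})$ for any $t_0,t_1\in[0,1]$; by definition (Case~1) these groups are computed from admissible transverse representatives, so we may assume that $L_0$ meets $L_1^{t_0}$ and $L_1^{t_1}$ transversally, with all intersection points contained in a fixed compact part of $W$ since the Legendrians $\p L_0$ and $\p L_1^t$ are disjoint. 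Exactness of $\widehat W$ and of all the Lagrangians rules out disk and sphere bubbling throughout.

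First I would set up the continuation map. Choose a smooth function $\rho\colon\R\to[t_0,t_1]$ with $\rho\equiv t_0$ near $-\infty$ and $\rho\equiv t_1$ near $+\infty$, a Hamiltonian vanishing on the cylindrical end (a small generic perturbation near the ends makes the Floer data regular), and a contact-type almost complex structure. Consider the moduli spaces of maps $u\colon\R\times[0,1]\to\widehat W$ solving the Floer equation with $u(\cdot,0)\in\widehat L_0$, $u(s,1)\in\widehat{L_1^{\rho(s)}}$, and asymptotics in $L_0\cap L_1^{t_0}$ at $-\infty$ and in $L_0\cap L_1^{t_1}$ at $+\infty$; counting rigid solutions defines a chain map between the two Floer complexes. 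Reversing $\rho$ produces a chain map in the opposite direction, and the usual parametrised-moduli-space and gluing argument---interpolating from the concatenation of $\rho$ with its reverse to a constant path---shows that the two compositions are chain homotopic to the identity. Hence the induced maps on homology are mutually inverse isomorphisms, and $\LFH_*(L_0,L_1^t)$ is independent of $t$.

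The step I expect to be the main obstacle is the a priori $C^0$-estimate forcing these Floer strips into a compact region, so that Gromov compactness applies. Two ingredients enter. First, an energy bound: because $(L_1^t)$ is an \emph{exact} Lagrangian isotopy, the action functional with moving boundary is well defined, and the energy of a strip is controlled by the difference of the actions of its asymptotic intersection points plus a uniformly bounded term coming from the generating functions of the isotopy---so energy is uniform over the moduli space. Second, a maximum principle preventing escape into $[1,\infty)\times\p W$: each completed Lagrangian $\widehat{L_1^{\rho(s)}}$ is conical in the end, of the form $[1,\infty)\times\p L_1^{\rho(s)}$, so that $r\circ u$ remains subharmonic and satisfies the Hopf lemma hypothesis along the (moving) conical boundary exactly as in the fixed-boundary case \cite[Lemma~5.5]{MiSei}, the only new feature being the harmless $s$-dependence of the cone; and since $\p L_0$ is disjoint from every $\p L_1^t$, all intersection points sit in $\{r\le 1\}$, so nothing pushes trajectories outward and the maximum principle yields the uniform $C^0$ bound. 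With compactness and transversality in hand, the formal argument of the previous paragraph goes through. (Alternatively, one may extend $(L_1^t)$ to an ambient Hamiltonian isotopy $\Psi_t$ of $\widehat W$ that is linear in $r$ on the end, use naturality of Lagrangian Floer homology to rewrite $\LFH_*(L_0,L_1^t)\cong\LFH_*(\Psi_t^{-1}(L_0),L_1^0)$, and apply the same moving-boundary continuation to the family $\Psi_t^{-1}(L_0)$, whose boundaries again stay disjoint from $\p L_1^0$; the estimates are identical.)
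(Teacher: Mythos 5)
The paper does not actually prove this lemma: it is quoted verbatim from Khovanov--Seidel and justified only by the citation \cite[Lemma 5.11]{MiSei}, so there is no in-paper argument to match yours against. Your proposal reconstructs the standard proof behind that citation, and it correctly isolates the two places where the hypothesis $\partial L_0\cap\partial L_1^t=\emptyset$ is used: all intersection points of the completed Lagrangians stay in a fixed compact region (any intersection in the conical end would force an intersection of the Legendrian boundaries), and strips of the moving-boundary continuation problem cannot escape. One caveat: the claim that the $s$-dependence of the conical boundary is ``harmless'' for the maximum principle is too quick. Along the moving boundary one only has $\partial_s u(s,1)\in T\widehat{L_1^{\rho(s)}}+\rho'(s)\cdot V$, where $V$ is the velocity of the isotopy, so with a contact-type $J$ the normal derivative of $r\circ u$ at a boundary point is $-\rho'(s)\,\widehat\lambda(V)$ rather than $0$; since the Legendrian boundaries genuinely move (as they do in the paper's application, where the isotopy is a positive isotopy pushing $\partial L_1$ by the Reeb flow), this term has a definite sign only for one direction of the continuation map, and the naive Hopf-lemma contradiction fails for the other. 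This is exactly the point that needs an argument, and your parenthetical alternative --- extending the isotopy to an ambient Hamiltonian isotopy that is linear in $r$ on the end and using naturality to trade the moving boundary for a fixed boundary with an admissible Hamiltonian term, where the usual maximum principle \cite[Lemma 4.4]{Rit} applies --- is the clean way to close it; with that route taken as the actual proof (or with a no-escape/action-monotonicity estimate replacing the bare Hopf lemma), your argument is complete and is essentially the standard proof of the cited result.
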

This lemma immediately shows that $\LFH_*(L_0,L_1)$ is well-defined.
\subsection{Properties of the wrapped/Lagrangian Floer homology}
We list well-known properties of the wrapped Floer homology and the Lagrangian Floer homology that will be used to prove Theorem \ref{ThmA} and \ref{ThmB}. In what follows, let $L$ be an admissible Lagrangian in a Liouville domain $(W,\lambda)$.
\begin{enumerate}[label=(P\arabic*)]
		\item \label{p1} For $c\notin \Spec(\p W,\alpha,\p L)$, there exists a nondegenerate admissible Hamiltonian $H:\widehat{W}\to \R$ of slope $c$ such that
$$\WFH_*^{<c}(L;W)=\WFH_*(L;H).$$
		\item \label{p2} Let $H$ be an admissible Hamiltonian (not necessarily nondegenerate). Then, we have an isomorphism
$$
\WFH_*(L;H)\cong \LFH_*(L,Fl^{X_H}_1(L)).
$$
For instance, we refer to \cite[Remark 4.4]{Rit}.
\item \label{p3} If $H$ is nondegenerate, then $L$ and $Fl_1^{X_H}(L)$ are admissible Lagrangians that intersect transversally. By the property \ref{p2}, the dimension of $\WFH_*(L;H)$ gives a lower bound of intersection points of $L\cap Fl_1^{X_H}(L)$.

\item \label{p4} Let $L_0,L_1$ be admissible Lagrangians. For any $\varphi\in \text{Ham}^c(W)$, it follows from Lemma \ref{lem: firstinv} that we obtain the invariance
$$
\LFH_*(L_0,L_1)\cong \LFH_*(L_0,\varphi(L_1))
$$
up to degree shift. Here we define $\ham^c(W)$ the group of Hamiltonian diffeomorphisms generated by Hamiltonians $H:W\to \R$ whose support is contained in $W\setminus \p W$.
\end{enumerate}

\section{Real Liouville domains}
The examples in Section \ref{sec: A_k} and \ref{sec: chyper} are real Liouville domains whose boundary has a periodic Reeb flow. We review the notion of real Liouville domains and investigate the relation between Maslov indices of a periodic Reeb orbit of the minimal common period and its half Reeb chord on the contact boundary, see Proposition \ref{lem: indrel}.
\subsection{Definition}\label{def:real}

Let $(W, \lda)$ be a Liouville domain. An \textit{exact anti-symplectic involution} is a diffeomorphism $\rho\in \Diff(W)$ such that $\rho^2=\id$ and $\rho^*\lambda=-\lambda$. A \textit{real Liouville domain} is a triple $(W,\lambda,\rho)$ such that $(W,\lambda)$ is a Liouville domain and $\rho$ is an exact anti-symplectic involution. We denote by $\Fix(\rho)$ the fixed point set of $\rho$, and $\Fix(\rho)$ is a Lagrangian in $W$. We call $\Fix(\rho)$ a \textit{real Lagrangian} with respect to $\rho$. 

Analogously, a {\it real contact manifold} is a triple $(\Sigma,\alpha,\rho_\Sigma)$ such that $(\Sigma,\alpha)$ is a contact manifold and $\rho_\Sigma\in \Diff(\Sigma)$ is an {\it anti-contact involution}, i.e., $\rho_\Sigma^2=\id$ and $\rho_\Sigma^*\alpha=-\alpha$. The fixed point set $\Fix(\rho_\Sigma)$ forms a {\it real Legendrian}. Since $\rho^*R_\alpha=-R_\alpha$ where $R_\alpha$ is the Reeb vector field of $\alpha$, we obtain a useful identity
\begin{equation}\label{eq: anticont}
	\rho \circ Fl_t^{R_\alpha}\circ \rho= Fl_{-t}^{R_\alpha}.
\end{equation}
The boundary of a real Liouville domain $(W,\lambda,\rho)$ is a real contact manifold $(\p W,\lambda|_{\p W},\rho|_{\p W})$. By abuse of notation, we denote the restriction $\rho|_{\Sigma}$ also by $\rho$. Real Lagrangians provide admissible Lagrangians in real Liouville domains as follows.
\begin{Lem}
Let $(W,\lambda,\rho)$ be a real Liouville domain such that $L=\Fix(\rho)$ and $\p L=\Fix(\rho|_{\p W})$ are not empty. Then the real Lagrangian $L$ is admissible.
\end{Lem}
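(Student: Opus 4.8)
The plan is to verify the three defining conditions in Definition~\ref{def: admLag} one at a time, extracting everything from the single relation $\rho^*\lambda=-\lambda$.

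The first thing I would record is that the Liouville vector field is $\rho$-invariant. Applying $\rho^*$ to the defining equation $d\lambda(X_\lambda,\cdot)=\lambda$ and using $\rho^*d\lambda=-d\lambda$ together with $\rho^*\lambda=-\lambda$ (the two signs cancel) gives $d\lambda(\rho^*X_\lambda,\cdot)=\lambda$, so nondegeneracy of $d\lambda$ forces $\rho^*X_\lambda=X_\lambda$; equivalently $d\rho_p\bigl(X_\lambda(p)\bigr)=X_\lambda(\rho(p))$ for every $p\in W$. In particular, for $p\in L=\Fix(\rho)$ the vector $X_\lambda(p)$ is fixed by $d\rho_p$, hence lies in $\Fix(d\rho_p)=T_pL$. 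Thus $X_\lambda$ is tangent to $L$ along the whole of $L$, which in particular verifies the third condition (tangency near $\partial L$), even in a slightly stronger form.

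Next I would treat transversality together with the Legendrian boundary. Being a diffeomorphism of the compact Liouville domain $W$ (a manifold with boundary), $\rho$ maps $\partial W$ to itself, so
\[
L\cap\partial W=\Fix(\rho)\cap\partial W=\Fix(\rho|_{\partial W})=\partial L ,
\]
and $\partial L$ is a real Legendrian in $(\partial W,\lambda|_{\partial W})$ by the discussion of real contact manifolds preceding the lemma. For transversality, fix $p\in\partial L$: since $X_\lambda$ is outward-pointing along $\partial W$ we have $X_\lambda(p)\notin T_p\partial W$, while $X_\lambda(p)\in T_pL$ by the previous step; as $T_p\partial W$ is a hyperplane in $T_pW$ this forces $T_pL+T_p\partial W=T_pW$. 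This gives the first condition (and, incidentally, justifies a posteriori that $L$ is a manifold with boundary $\partial L=L\cap\partial W$).

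Finally, exactness: for $p\in L$ and $v\in T_pL$ one has $d\rho_p v=v$, hence $(\rho^*\lambda)_p(v)=\lambda_p(v)$; combined with $(\rho^*\lambda)_p(v)=-\lambda_p(v)$ this yields $\lambda_p(v)=0$. So $\lambda|_L\equiv 0$, which is trivially exact, giving the second condition. I do not expect a genuine obstacle here; the only point that deserves care is the ordering of the steps — one should establish $\rho$-invariance of $X_\lambda$, and thereby its tangency to $L$, before invoking it for both the transversality at $\partial L$ and the identification of $\partial L$ with $\Fix(\rho|_{\partial W})$.
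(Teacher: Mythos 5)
Your proof is correct and follows essentially the same route as the paper: $\lambda|_L\equiv 0$ from $\rho^*\lambda=-\lambda$ gives exactness and the Legendrian boundary, and the sign cancellation in $\rho^*d\lambda(X_\lambda,\cdot)$ gives $\rho$-invariance of $X_\lambda$ and hence its tangency to $\Fix(\rho)$. The only difference is that you also spell out the transversality of $L$ to $\partial W$ (via $X_\lambda$ being outward-pointing yet tangent to $L$), a point the paper leaves implicit.
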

\begin{proof}
By the condition that $\rho^* \lambda = -\lambda$, it follows $\lambda|_L$ is identically zero. In particular $L$ is exact, and the intersection $L \cap \partial W$ is a Legendrian. Now denote the Liouville vector field of $\lambda$ by $X_{\lda}$ as before, i.e., $\iota_{X_{\lda}} \ow = \lda$. Then we have $\iota_{T\rho X_{\lda}} \ow = \rho^* \iota_{X_{\lda}} \ow \circ T\rho = \rho^* \lda \circ T\rho = \lda$, so $T\rho X_{\lda} = X_{\lda}$. We conclude that $X_{\lda}$ is tangent to $L = \Fix(\rho)$. 
\end{proof}


\subsection{Half Reeb chords of periodic Reeb orbits}\label{sec: halfreeb}
Let $(\Sigma,\alpha,\rho)$ be a real contact manifold with periodic Reeb flow and suppose that $\mathcal{L}=\Fix(\rho)$ is a nonempty, {\it path-connected} Legendrian. We assume that the Maslov class $\mu_\mathcal{L}$ vanishes. Let $\gamma: [0, T] \rightarrow \Sigma$ be periodic Reeb orbit of period $T$, starting at a point in the Legendrian $\mathcal{L}$. From \eqref{eq: anticont}, we see that $c(\frac{T}{2}) \in \mathcal{L}$. We define a corresponding Reeb chord $c: [0, \frac{T}{2}] \rightarrow \Sigma$ by
$$
c(t): = \gamma|_{[0,\frac{T}{2}]}(t).
$$
 We call this chord the \textit{half Reeb chord} associated to $\gamma$. Suppose now that both $\gamma$ and $c$ are contractible. We recall the definition of the Maslov index $\mu(\gamma)$ of a periodic Reeb orbit $\gamma$. Choose a capping disk $u:D^2\to \Sigma$ of $\gamma$ such that $u(e^{2\pi it/T})=x(t)$ and a symplectic trivialization of $u^*\xi$
$$
\phi_u:(u^*\xi,d\alpha) \longrightarrow D^2\times (\C^{n-1},\ow_\text{std}).
$$  Writing
$$
\Psi_\gamma:[0,T]\to \Sp(2n-2),\quad \Psi_\gamma(t):=\phi_u\circ TFl_t^{R_\alpha}|_\xi(\gamma(0))\circ \phi_u^{-1}
$$
for the linearization of the Reeb flow along $\gamma$ with respect to the trivialization $\phi_u$, we define the {\it Maslov index} of a periodic Reeb orbit $\gamma$ as the Robbin-Salamon index
$$
\mu(\gamma):=\mu_{RS}(\text{Gr}(\Psi_\gamma),\Delta),
$$
where $\text{Gr}(\Psi_\gamma)=\{(x,\Psi_\gamma(x)) \ |\ x\in \R^{2n}\}$ is a graph Lagrangian and $\Delta=\text{Gr}(\id)$ is a diagonal Lagrangian in $(\R^{2n}\times \R^{2n},(-\ow_\text{std})\oplus \ow_\text{std})$. Since $c_1(\xi)$ vanishes on $\pi_2(\Sigma)$ under our assumption, $\mu(\gamma)$ does not depend of the choice of capping disks. 

We examine the relation between the indices of a periodic Reeb orbit of the minimal common period and its half Reeb chord. We first need the following lemma.
 
\begin{Lem}\label{lem: hor}
Let $\Psi:[0,T]\to \Sp(2n-2)$ satisfying $\Psi(0)=\Psi(T)=\id$ and let $\Lambda$ be a Lagrangian subspace in $\C^{n-1}$. Then we have
$$
\mu_{RS}(\Psi\Lambda,\Lambda)=\mu_{RS}(\text{Gr}(\Psi),\Delta).
$$
\end{Lem}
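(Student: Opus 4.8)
The claim relates a Lagrangian-type Robbin–Salamon index of the path $\Psi\Lambda$ in $\C^{n-1}$ with a graph-Lagrangian-type Robbin–Salamon index of $\mathrm{Gr}(\Psi)$ in $\C^{n-1}\times\C^{n-1}$. The plan is to exploit the standard ``diagonal trick'': the graph $\mathrm{Gr}(\Psi)$ of a symplectic path is a Lagrangian path in the product, and $\mu_{RS}(\mathrm{Gr}(\Psi),\Delta)$ is by definition the Maslov index of the periodic orbit; on the other hand $\mu_{RS}(\Psi\Lambda,\Lambda)$ is a Lagrangian Maslov index in the single factor. The key observation is that $\Psi(t)\Lambda = \Lambda$ (as subspaces of $\C^{n-1}$) if and only if $\mathrm{Gr}(\Psi(t))\cap(\Lambda\times\Lambda)$ is nontrivial, since $(\lambda,\Psi(t)\lambda)\in\Lambda\times\Lambda$ exactly when $\lambda\in\Lambda$ and $\Psi(t)\lambda\in\Lambda$. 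So one wants to compare $\mu_{RS}(\mathrm{Gr}(\Psi),\Delta)$ with $\mu_{RS}(\mathrm{Gr}(\Psi),\Lambda\times\Lambda)$, and separately relate the latter to $\mu_{RS}(\Psi\Lambda,\Lambda)$.

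First I would recall the three properties of the Robbin–Salamon index that do the work: (i) \emph{naturality} --- for any symplectic $\Phi$, $\mu_{RS}(\Phi V_1,\Phi V_2)=\mu_{RS}(V_1,V_2)$; (ii) the \emph{homotopy/endpoint} behavior, i.e.\ $\mu_{RS}$ depends on the path of Lagrangians and the fixed reference only through endpoints within a stratum, plus additivity under concatenation; (iii) the product formula $\mu_{RS}(V_1\oplus V_1', V_2\oplus V_2')=\mu_{RS}(V_1,V_2)+\mu_{RS}(V_1',V_2')$. Using (iii), consider the path $t\mapsto \mathrm{Gr}(\Psi(t))$ versus the \emph{constant} reference $\Lambda\times\Lambda$ in $(\C^{n-1}\times\C^{n-1}, -\ow\oplus\ow)$. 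There is a symplectomorphism of the product (a ``rotation by $\pi/2$ in each conjugate pair'' or the linear map $(x,y)\mapsto(x,-y)$ composed with swaps) identifying $\mathrm{Gr}(\Psi(t))$ with the path $\Psi(t)\times\mathrm{id}$ applied to $\Delta$... more usefully, the standard fact is $\mu_{RS}(\mathrm{Gr}(\Psi),\Delta)=\mu_{RS}(\mathrm{Gr}(\Psi),\mathrm{Gr}(\mathrm{id}))$, and by composing with $\mathrm{Gr}(\Psi)^{-1}$-type maps one reduces crossing computations in the product to crossing computations of $\Psi(t)$ against $\mathrm{id}$. The crucial input is the hypothesis $\Psi(0)=\Psi(T)=\mathrm{id}$: it guarantees that at both endpoints $\mathrm{Gr}(\Psi)=\Delta$ and also $\Psi\Lambda=\Lambda$, so all the ``half-integer endpoint contributions'' occur at the same parameter values on both sides and one can match them stratum-by-stratum.

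Concretely, the cleanest route: write $\C^{n-1}\times\C^{n-1}\cong (\C^{n-1})^* \times \C^{n-1}$ and identify $\Delta$ with the ``conormal'' and $\Lambda\times\Lambda$ with... actually I would invoke the block decomposition. Since $\Psi(0)=\Psi(T)=\mathrm{id}$, homotope (rel endpoints, through symplectic paths equal to $\mathrm{id}$ at $0,T$) to a path of the form $\Psi(t)=A(t)\oplus B(t)$ after a symplectic change of basis adapted to $\Lambda$, where $\Lambda$ becomes the standard horizontal subspace; this is legitimate because $\mu_{RS}(\Psi\Lambda,\Lambda)$ and $\mu_{RS}(\mathrm{Gr}\Psi,\Delta)$ are both homotopy invariants rel endpoints when the endpoint loops are trivial. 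For such split paths the product formula (iii) reduces the claim to the one-dimensional case $\C^1$, where both sides can be checked directly by the winding-number description of $\mu_{RS}$ (a full rotation contributes $2$ to $\mu_{RS}(\mathrm{Gr},\Delta)$ and also $2$ to $\mu_{RS}(\Psi\Lambda,\Lambda)$ since the line $\Psi(t)\Lambda$ rotates through $\Lambda$ twice per full turn of $\Psi$ --- wait, carefully: $\Psi(t)\Lambda$ makes \emph{one} full turn of $\mathrm{RP}^1$ per full turn of $\Psi$, contributing... this 1-dim bookkeeping is exactly the routine calculation to be carried out).

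\textbf{Main obstacle.} The genuine difficulty is the bookkeeping of the \emph{half-integer} contributions of the Robbin–Salamon index at non-generic crossings and, above all, at the endpoints $t=0,T$. The identity $\Psi\Lambda=\Lambda \iff \mathrm{Gr}(\Psi)\cap(\Lambda\times\Lambda)\neq 0$ is only a statement about \emph{when} crossings occur; matching the \emph{signatures} of the crossing forms (and the convention-dependent endpoint factor of $\tfrac12$) on the two sides requires care. The hypothesis $\Psi(0)=\Psi(T)=\mathrm{id}$ is what makes this tractable, since it lets me normalize both endpoints simultaneously and then reduce, via homotopy invariance and the direct-sum property, to a one-dimensional model computation where every contribution is transparent; so I expect the proof to be short once that reduction is set up, with the signature/endpoint matching being the only place demanding genuine attention.
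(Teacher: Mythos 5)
Your strategy---use naturality and homotopy invariance rel endpoints to bring $\Psi$ (which, since $\Psi(0)=\Psi(T)=\id$, is a loop based at the identity) into a split normal form $R_k\oplus\id$ with $R_k$ a degree-$k$ rotation in a single $\C$-factor, then finish by the direct-sum axiom and a one-dimensional computation---is viable and genuinely different from the paper's argument. But as written it stops exactly at the point you yourself call the main obstacle. The reduction is fine once you invoke the standard fact that $\pi_1(\Sp(2m,\R))\cong\Z$ is generated by a loop inside a $U(1)$-subgroup acting on one $\C$-factor, together with naturality (conjugating by a symplectic map sending $\Lambda$ to the horizontal subspace changes neither side, since $(\Phi\times\Phi)\Delta=\Delta$). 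The decisive one-dimensional bookkeeping, however, is left as ``the routine calculation to be carried out,'' and your two in-line attempts at it contradict each other (``rotates through $\Lambda$ twice per full turn'' versus ``makes one full turn of $\R P^1$''); since the whole content of the lemma is this count, the proposal is not yet a proof. For the record it does close up: for $R_k(t)=e^{2\pi i kt/T}$ the line $R_k(t)\R$ meets $\R$ at $2k+1$ parameters, contributing $+1$ at each of the $2k-1$ interior crossings (the crossing form of a positive rotation is positive definite) and $+\tfrac12$ at each endpoint, so $\mu_{RS}(\Psi\Lambda,\Lambda)=2k$; on the graph side the crossings with $\Delta$ are the $k+1$ times $R_k(t)=\id$, each interior one contributing $+2$ and each endpoint $+1$, giving $\mu_{RS}(\mathrm{Gr}(\Psi),\Delta)=2k$ as well, while the constant summands contribute $0$. (Also a small slip in your motivation: $\mathrm{Gr}(\Psi(t))\cap(\Lambda\times\Lambda)\neq 0$ is equivalent to $\Psi(t)\Lambda\cap\Lambda\neq 0$, not to $\Psi(t)\Lambda=\Lambda$.)

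The paper's proof avoids normal forms and crossing computations altogether: by Robbin--Salamon, Theorem 3.2, one has $\mu_{RS}(\Psi\Lambda,\Lambda)=\mu_{RS}(\mathrm{Gr}(\Psi),\Lambda\times\Lambda)$ directly, and by their Theorem 3.5 the effect of changing the reference from $\Lambda\times\Lambda$ to $\Delta$ is the H\"ormander index $s(\Lambda\times\Lambda,\Delta;\Delta,\Delta)$ of the endpoints, which coincide because $\mathrm{Gr}(\Psi(0))=\mathrm{Gr}(\Psi(T))=\Delta$; Portaluri's formula then gives $s=\tfrac12\bigl(\tau_{KH}(\Lambda\times\Lambda,\Delta,\Delta)-\tau_{KH}(\Lambda\times\Lambda,\Delta,\Delta)\bigr)=0$. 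So the hypothesis $\Psi(0)=\Psi(T)=\id$ enters there only through the equality of the two endpoint Lagrangians, replacing your reduction-plus-model computation by two citations. Your route, once the one-dimensional count is actually written down, yields a more self-contained and concrete proof; the paper's route buys brevity and no convention-sensitive signature bookkeeping.
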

\begin{proof}

By \cite[Theorem 3.2]{RS}, we know that $\mu_{RS}(\Psi\Lambda,\Lambda)=\mu_{RS}(\text{Gr}(\Psi),\Lambda\times \Lambda)$. The index difference 
$$
s(\Lambda\times \Lambda,\Delta;\Delta,\Delta)=\mu_{RS}(\text{Gr}(\Psi),\Delta)-\mu_{RS}(\text{Gr}(\Psi),\Lambda\times \Lambda)
$$
is given by the H\"ormander index \cite[Theorem 3.5]{RS}, and by \cite[Formula (2.12)]{Po} it can be computed as
$$
s(V_0,V_1;V'_0,V'_1)=\frac{1}{2}(\tau_{KH}(V_0,V_1,V'_0)-\tau_{KH}(V_0,V_1,V'_1)),
$$
where $V_0,V_1,V'_0,V'_1$ are Lagrangian subspaces in $\C^{n-1}$ and $\tau_{KH}$ denotes the H\"{o}rmander-Kashiwara index defined in \cite[Section 2.3]{Po}. From this formula, we conclude that $s(\Lambda\times \Lambda,\Delta;\Delta,\Delta)=0$.
\end{proof}

Let $c:[0,\frac{T}{2}]\to \Sigma$ be a Reeb chord and $\overline{c}(t):=\rho(c(\frac{T}{2}-t))$ the reflected Reeb chord. By concatenation, we obtain a  periodic Reeb orbit $\gamma=c\# \overline{c}:[0,T]\to \Sigma$. Such a periodic Reeb orbit is called {\it symmetric}. Let $v:D^+\to \Sigma$ be a capping half-disk of $c$. We define a capping half-disk of $\overline{c}$ by 
$$
\overline{v}:D^+\to \Sigma,\quad \overline{v}(z):=\rho(v(-\overline{z})).
$$
We then get a capping disk of a periodic Reeb orbit $\gamma$ by gluing $v$ and $\overline{v}$ along $v(D^+_\R)=\overline{v}(D^+_\R)$, 
$$
u:=v\#\overline{v}:D^2\to \Sigma,\quad u(z):=\begin{cases}
	v(z) & \text{for $\im z\ge 0$} \\
	\overline{v}(-z) & \text{for $\im z\le 0$}
\end{cases}.
$$
It satisfies $\rho\circ u=u\circ \mathcal{I}_{D^2}$, where $\mathcal{I}_{D^2}(z)=\overline{z}$ denote a complex conjugate map on $D^2\subset \C$. By \cite[Lemma 3.10]{FK}, there exists a symplectic trivialization
$$
\phi_u:(u^*\xi,d\alpha) \longrightarrow D^2\times (\C^{n-1},\ow_\text{std})
$$
such that $\phi_u\circ T\rho|_{\xi}(u(z))=\mathcal{I}_{\C^{n-1}}\circ \phi_u(\rho(u(z)))$ for $z\in D^+$. Such a trivialization $\phi_u$ is called \emph{symmetric} symplectic trivialization. Since it holds that $\phi_u(T_{u(z)}\mathcal{L})=\Lambda_\text{hor}^{n-1}$ for $z\in D^2\cap \{\im z=0\}$, a symmetric trivialization $\phi_v$ restricts to adapted symplectic trivializations $\phi_v$ and $\phi_{\overline{v}}$ of $v^*\xi$ and $\overline{v}^*\xi$, respectively.

\begin{prop} \label{lem: indrel}
Let $\gamma: [0, T] \rightarrow \Sigma$ be a periodic Reeb orbit of the minimal common period $T$, starting at a point in the Legendrian $\mathcal{L}$ and $c$ its half Reeb chord. Then we have $\mu(\gamma) = 2 \mu(c)$.
\end{prop}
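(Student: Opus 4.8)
The plan is to exploit the symmetry forced on a periodic Reeb orbit that starts on the real Legendrian. First I would record that, since $T$ is the minimal common period, $Fl^{R_\alpha}_T=\id$, so that \eqref{eq: anticont} gives $\gamma(T-t)=\rho(\gamma(t))$ for all $t$; in particular $\gamma$ is the symmetric orbit $c\#\overline c$ built from its half Reeb chord $c=\gamma|_{[0,T/2]}$. I would then use exactly the symmetric capping data set up just before the statement: take a capping half-disk $v$ of $c$, form $\overline v$ and the glued capping disk $u=v\#\overline v$ of $\gamma$ with $\rho\circ u=u\circ\mathcal{I}_{D^2}$, and pick the symmetric symplectic trivialization $\phi_u$ of $u^*\xi$ from \cite[Lemma 3.10]{FK}, which intertwines $T\rho|_\xi$ with $\mathcal{I}_{\C^{n-1}}$ and restricts to the adapted trivialization $\phi_v$ along $c$. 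Write $\Psi_\gamma\colon[0,T]\to\Sp(2n-2)$ for the linearized Reeb flow along $\gamma$ with respect to $\phi_u$, and abbreviate $\mathcal{I}=\mathcal{I}_{\C^{n-1}}$ and $\Lambda=\Lambda_\text{hor}^{n-1}$.

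The heart of the argument is the identity $\Psi_\gamma(T-t)=\mathcal{I}\,\Psi_\gamma(t)\,\mathcal{I}$ for all $t\in[0,T]$. To establish it I would substitute $Fl^{R_\alpha}_{T-t}=Fl^{R_\alpha}_{-t}=\rho\circ Fl^{R_\alpha}_t\circ\rho$ (first equality from periodicity, second from \eqref{eq: anticont}) into the definition of $\Psi_\gamma(T-t)$, use $\rho(\gamma(0))=\gamma(0)$ because $\gamma(0)\in\mathcal{L}=\Fix(\rho)$, and then push the two resulting copies of $T\rho|_\xi$ past $\phi_u$ via the intertwining property of the symmetric trivialization — once at the time-$t$ point and once at the base point $z=1\in D^2$, where the trivialization value serves for both $t=0$ and $t=T$. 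Plugging $t=0$ and using $Fl^{R_\alpha}_T=\id$ also yields $\Psi_\gamma(0)=\Psi_\gamma(T)=\id$.

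Granting the identity, the proof concludes formally. By $\Psi_\gamma(0)=\Psi_\gamma(T)=\id$ and Lemma \ref{lem: hor}, $\mu(\gamma)=\mu_{RS}(\mathrm{Gr}(\Psi_\gamma),\Delta)=\mu_{RS}(\Psi_\gamma\Lambda,\Lambda)$. Setting $\ell(t)=\Psi_\gamma(t)\Lambda$ and using additivity of the Robbin--Salamon index under concatenation, $\mu(\gamma)=\mu_{RS}(\ell|_{[0,T/2]},\Lambda)+\mu_{RS}(\ell|_{[T/2,T]},\Lambda)$. The first summand equals $\mu(c)$, because $\phi_u$ restricts to $\phi_v$ along $c=\gamma|_{[0,T/2]}$, so $\ell|_{[0,T/2]}=\Psi_c\Lambda$. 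For the second summand, the key identity together with $\mathcal{I}\Lambda=\Lambda$ (the horizontal Lagrangian is fixed by complex conjugation) gives $\ell(T-s)=\mathcal{I}(\ell(s))$, so $\ell|_{[T/2,T]}$ is the reversal of the path $\mathcal{I}\circ\ell|_{[0,T/2]}$; since reversing a path negates its Robbin--Salamon index and the anti-symplectic map $\mathcal{I}$ (which reverses the sign of the symplectic form and fixes $\Lambda$) also negates it, the two signs cancel and the second summand is again $\mu(c)$. Hence $\mu(\gamma)=2\mu(c)$.

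The one genuinely delicate step is the identity $\Psi_\gamma(T-t)=\mathcal{I}\Psi_\gamma(t)\mathcal{I}$: one has to verify that it is conjugation precisely by $\mathcal{I}$ — not by some other element of $\Sp(2n-2)$ — that emerges, which relies on $\gamma(0)$ being a fixed point of $\rho$ and on the symmetric trivialization genuinely matching (via conjugation) the values at $z$ and $\bar z$, and in particular at $z=1$. Once this bookkeeping with \eqref{eq: anticont}, periodicity, and $\phi_u$ is done correctly, the rest is routine manipulation of the standard properties of $\mu_{RS}$ from \cite{RS}.
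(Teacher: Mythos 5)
Your proposal is correct, and its scaffolding matches the paper's proof: both work with the symmetric orbit $\gamma=c\#\overline{c}$, the glued capping disk $u=v\#\overline{v}$, the symmetric trivialization of \cite[Lemma 3.10]{FK}, Lemma \ref{lem: hor} to pass between $\mu_{RS}(\text{Gr}(\Psi_\gamma),\Delta)$ and $\mu_{RS}(\Psi_\gamma\Lambda_\text{hor}^{n-1},\Lambda_\text{hor}^{n-1})$, and the catenation property to split the index at time $\frac{T}{2}$. Where you genuinely differ is the second half of the orbit. The paper identifies that summand as $\mu(\overline{c})$, computed in the restricted adapted trivialization $\phi_{\overline{v}}$, and then concludes $\mu(c)=\mu(\overline{c})$ because $c$ and $\overline{c}$ lie in the same Morse-Bott component $\mathcal{L}_{\frac{T}{2}}$, i.e.\ it delegates the equality to stratum homotopy invariance of $\mu_{RS}$ together with path-connectedness of $\mathcal{L}$. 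You instead establish the conjugation identity $\Psi_\gamma(T-t)=\mathcal{I}\,\Psi_\gamma(t)\,\mathcal{I}$ directly from \eqref{eq: anticont}, $Fl_T^{R_\alpha}=\id$, $\rho(\gamma(0))=\gamma(0)$ and the intertwining property of the symmetric trivialization (this computation checks out), and then note that the second half of the Lagrangian path is the time-reversal of $\mathcal{I}$ applied to the first half; since reversal and an anti-symplectic map fixing $\Lambda_\text{hor}^{n-1}$ each negate $\mu_{RS}$, the two signs cancel and the second summand equals $\mu(c)$ outright. Your route is a bit more self-contained --- it needs neither path-connectedness of $\mathcal{L}$ nor the Morse-Bott/clean-intersection structure for this step, only the symmetry of the orbit and standard properties of the Robbin--Salamon index --- while the paper's argument is shorter on the page because the constancy of the index on Morse-Bott components is already available in its setup.
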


\begin{proof}
Consider paths of Lagrangian subspaces
$$
\Psi_c:[0,\frac{T}{2}]\to\mathcal{L}(n), \quad \Psi_c(t)=\phi_v\circ TFl_t^{R_\alpha}|_\xi(T_{c(0)}\mathcal{L})
$$
$$
\Psi_{\overline{c}}:[0,\frac{T}{2}]\to\mathcal{L}(n), \quad \Psi_{\overline{c}}(t)=\phi_{\overline{v}}\circ TFl_t^{R_\alpha}|_\xi(T_{\overline{c}(0)}\mathcal{L}).
$$
We then see that
$$
\mu(c)=\mu_{RS}(\Psi_c,\Lambda_\text{hor}^{n-1}),\quad \mu(\overline{c})=\mu_{RS}(\Psi_{\overline{c}},\Lambda_\text{hor}^{n-1}).
$$
Since $\Psi_c(\frac{T}{2})=\Psi_{\overline{c}}(0)$, we obtain $\Psi_c\#\Psi_{\overline{c}}:[0,T]\to \mathcal{L}(n)$ by concatenation and one checks that
$$
\Psi_c\#\Psi_{\overline{c}}=\phi_v\circ TFl_t^{R_\alpha}|_\xi(T_{\gamma(0)}\mathcal{L})=\phi_v\circ TFl_t^{R_\alpha}|_\xi\circ \phi_v^{-1}(\Lambda_\text{hor}^{n-1}).
$$
By catenation property of Robbin-Salamon index \cite[Theorem 2.3]{RS}, we obtain
$$
\mu(c)+\mu(\overline{c})=\mu_{RS}(\phi_v\circ TFl_t^{R_\alpha}|_\xi\circ \phi_v^{-1} \Lambda_\text{hor}^{n-1},\Lambda_\text{hor}^{n-1}).
$$
Abbreviating
$$
\Psi_\gamma(t): [0, T] \longrightarrow \Sp(2n-2), \quad t \longmapsto \phi_v \circ TFl_t^{R_{\alpha}}|_{\xi} \circ \phi_v^{-1},
$$
we see that $\text{Gr}(\Psi_\gamma(t))=\Delta$ for $t=0,T$ as $Fl_T^{R_\alpha}=\id$. By Lemma \ref{lem: hor}, we have
$$
\mu_{RS}(\Psi_\gamma \Lambda_\text{hor}^{n-1},\Lambda_\text{hor}^{n-1})=\mu_{RS}(\text{Gr}(\Psi_\gamma),\Delta)=\mu(\gamma).
$$
Since $c$ and $\overline{c}$ lie in the same Morse-Bott component $\mathcal{L}_{\frac{T}{2}}$, we get $\mu(c)=\mu(\overline{c})$.
\end{proof}

\subsection{A criterion for Morse-Bott type Reeb chords}

In this section, we give a criterion to check Reeb chords are of Morse-Bott type where the contact form is already of Morse-Bott type.
 
\begin{Def}\label{def: mbreebchord}
A contact form $\alpha$ on $\Sigma$ is of {\it Morse-Bott type} if the following holds.
\begin{enumerate}[label=(\arabic*)]
	\item The spectrum of periodic Reeb orbits $\Spec(\Sigma,\alpha)$ is discrete.
	\item For $T\in \Spec(\Sigma,\alpha)$, the set $\mathcal{N}_T:=\{z\in \Sigma\ |\ Fl_T^{R_\alpha}(z)=z\}$ is a (possibly disconnected) closed submanifold of $\Sigma$ such that $T_z\mathcal{N}_T=\ker(T_zFl_T^{R_\alpha}-\id)$ for $z\in \mathcal{N}_T$.
\end{enumerate}	
Here, $\Spec(\Sigma,\alpha)$ denotes the set of all periods of periodic Reeb orbits.
\end{Def}
\begin{Rem}
We do not assume that rank $d\alpha|_{\mathcal{N}_T}$ is locally constant, which is a part of definition in \cite[Definition 1.7]{Bou}.
\end{Rem}
Let $(\Sigma,\alpha,\rho)$ be a real contact manifold with a Morse-Bott type contact form $\alpha$. Since each Reeb chord gives rise to a symmetric periodic Reeb orbit as in Section \ref{sec: halfreeb}, we deduce that
$$
\Spec(\Sigma,\alpha,\mathcal{L})\subset \frac{1}{2}\Spec(\Sigma,\alpha).
$$
In particular, $\Spec(\Sigma,\alpha,\mathcal{L})$ is discrete. Using the identity \eqref{eq: anticont}, one can check that
\begin{equation}\label{eq: mbc}
\mathcal{L}_T=\mathcal{N}_{2T}\cap \mathcal{L}\quad \text{for $T\in \Spec(\Sigma,\alpha,\mathcal{L})$}.
\end{equation}

We now derive a criterion for Morse-Bott type Reeb chords.
\begin{prop}\label{prop: mbcrit}
	Let $(\Sigma,\alpha,\rho)$ be a closed real contact manifold with a Morse-Bott type contact form $\alpha$ and let $\mathcal{L}=\Fix(\rho)$ be a nonempty Legendrian. Suppose that $\mathcal{N}_{2T}$ and $\mathcal{L}$ intersect cleanly along a closed submanifold $\mathcal{L}_T$ for all $T\in \Spec(\Sigma,\alpha,\mathcal{L})$. Then, Reeb chords are of Morse-Bott type.
\end{prop}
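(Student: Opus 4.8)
The plan is to verify the two conditions of Definition~\ref{def: morsebott} one at a time. Condition~(1), discreteness of $\Spec(\Sigma,\alpha,\mathcal{L})$, is already in hand: we observed above that $\Spec(\Sigma,\alpha,\mathcal{L})\subset\frac12\Spec(\Sigma,\alpha)$, and $\Spec(\Sigma,\alpha)$ is discrete because $\alpha$ is of Morse--Bott type. For condition~\ref{msReebchord} we fix $T\in\Spec(\Sigma,\alpha,\mathcal{L})$ and invoke \eqref{eq: mbc}, which identifies $\mathcal{L}_T=\{z\in\mathcal{L}\mid Fl_T^{R_\alpha}(z)\in\mathcal{L}\}$ with $\mathcal{N}_{2T}\cap\mathcal{L}$; note also that $\mathcal{L}_T=Fl_{-T}^{R_\alpha}(\mathcal{L})\cap\mathcal{L}$ straight from its definition. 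Hence the clean-intersection hypothesis on the pair $(\mathcal{N}_{2T},\mathcal{L})$ already provides the part of condition~\ref{msReebchord} asserting that $\mathcal{L}_T$ is a closed submanifold of $\Sigma$---and therefore of $\mathcal{L}$, being a closed subset contained in the embedded submanifold $\mathcal{L}$---together with the identity $T_z\mathcal{N}_{2T}\cap T_z\mathcal{L}=T_z\mathcal{L}_T$ for every $z\in\mathcal{L}_T$. What is left is to match the tangential clean-intersection data of $(Fl_{-T}^{R_\alpha}(\mathcal{L}),\mathcal{L})$ with that of $(\mathcal{N}_{2T},\mathcal{L})$; i.e., to show
$$
TFl_{-T}^{R_\alpha}(T_w\mathcal{L})\cap T_z\mathcal{L}=T_z\mathcal{N}_{2T}\cap T_z\mathcal{L}\qquad\text{for }z\in\mathcal{L}_T,\ w:=Fl_T^{R_\alpha}(z).
$$

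This should be a short piece of linear algebra fed by the anti-contact identity \eqref{eq: anticont}. Since $z\in\mathcal{L}_T\subset\Fix(\rho)$ and, by \eqref{eq: mbc}, also $w\in\mathcal{L}=\Fix(\rho)$, the involutions $P:=T\rho|_z$ and $Q:=T\rho|_w$ satisfy $T_z\mathcal{L}=\ker(P-\id)$ and $T_w\mathcal{L}=\ker(Q-\id)$. From $z\in\mathcal{N}_{2T}$ we get $Fl_{2T}^{R_\alpha}(z)=z$, hence $Fl_T^{R_\alpha}(w)=z$ and $TFl_{-T}^{R_\alpha}|_w=(TFl_T^{R_\alpha}|_z)^{-1}=:A^{-1}$; moreover the subspace $TFl_{-T}^{R_\alpha}(T_w\mathcal{L})$ occurring in condition~\ref{msReebchord} is precisely $A^{-1}(T_w\mathcal{L})$, because $w$ is the unique $Fl_{-T}^{R_\alpha}$-preimage of $z$ lying on $\mathcal{L}$, so $T_z(Fl_{-T}^{R_\alpha}(\mathcal{L}))=A^{-1}(T_w\mathcal{L})$. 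Differentiating $Fl_T^{R_\alpha}=\rho\circ Fl_{-T}^{R_\alpha}\circ\rho$, a rewriting of \eqref{eq: anticont}, at the point $w$ and combining with $TFl_{2T}^{R_\alpha}|_z=TFl_T^{R_\alpha}|_w\circ TFl_T^{R_\alpha}|_z$, I obtain the key relation $TFl_{2T}^{R_\alpha}|_z=P\,A^{-1}\,Q\,A$.

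With this relation in hand, for any $v\in T_z\mathcal{L}$ (so that $Pv=v$) one has the chain of equivalences
$$
v\in TFl_{-T}^{R_\alpha}(T_w\mathcal{L})\iff QAv=Av\iff PA^{-1}QAv=v\iff v\in\ker\big(TFl_{2T}^{R_\alpha}|_z-\id\big),
$$
where the first equivalence uses $T_w\mathcal{L}=\ker(Q-\id)$ and $v=A^{-1}(Av)$, the middle one follows by applying $P$ (and $P^2=\id$), and the last uses the Morse--Bott property of $\alpha$, i.e.\ $T_z\mathcal{N}_{2T}=\ker(TFl_{2T}^{R_\alpha}|_z-\id)$. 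Intersecting with $T_z\mathcal{L}$ then gives $TFl_{-T}^{R_\alpha}(T_w\mathcal{L})\cap T_z\mathcal{L}=T_z\mathcal{N}_{2T}\cap T_z\mathcal{L}=T_z\mathcal{L}_T$, which is exactly condition~\ref{msReebchord}. I do not expect a genuine obstacle here: the argument is elementary once \eqref{eq: anticont} is available, and the only point demanding care is the base-point bookkeeping for the differentials above---in particular reading off $T_z(Fl_{-T}^{R_\alpha}(\mathcal{L}))$ correctly and recording that $2T$ does lie in $\Spec(\Sigma,\alpha)$ (via the reflected periodic Reeb orbit attached to a Reeb chord), which is what makes the Morse--Bott hypothesis on $\alpha$ applicable to $\mathcal{N}_{2T}$.
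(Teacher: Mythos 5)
Your proof is correct and takes essentially the same route as the paper's: both reduce the statement to the tangential clean-intersection condition at $z\in\mathcal{L}_T$ and settle it by linearizing the identity $\rho\circ Fl_t^{R_\alpha}\circ\rho=Fl_{-t}^{R_\alpha}$, so that a vector in $TFl_{-T}^{R_\alpha}(T_w\mathcal{L})\cap T_z\mathcal{L}$ is seen to be fixed by $TFl_{2T}^{R_\alpha}|_z$ and hence lies in $T_z\mathcal{N}_{2T}\cap T_z\mathcal{L}=T_z\mathcal{L}_T$. The only cosmetic difference is that you phrase the linearized identity as $TFl_{2T}^{R_\alpha}|_z=P A^{-1}QA$ and run a two-way equivalence giving both inclusions at once, while the paper handles the easy inclusion $T_z\mathcal{L}_T\subset TFl_{-T}^{R_\alpha}(T_w\mathcal{L})\cap T_z\mathcal{L}$ separately.
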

\begin{proof}
Fix a Morse-Bott component $\mathcal{L}_T$. For simplicity, we use the following notation
$$
\Phi_t=TFl_t^{R_\alpha},\quad \varrho=T\rho.
$$
From the above discussion, it remains to show that $Fl_{-T}^{R_\alpha}(\mathcal{L})$ and $\mathcal{L}$ intersect cleanly along $\mathcal{L}_T$, i.e., 
$$
T_z\mathcal{L}_T=\Phi_{-T}(T_w\mathcal{L})\cap T_z\mathcal{L}\quad \text{for $z\in \mathcal{L}_T$ and $w=Fl_T^{R_\alpha}(z)$}.
$$
By \eqref{eq: mbcp}, we see that $T_z\mathcal{L}_T\subset \Phi_{-T}(T_w\mathcal{L})\cap T_z\mathcal{L}$. This means that $\Phi_TX\in T\mathcal{L}$ for $X\in T\mathcal{L}_T$. Note that $T\mathcal{L}=\ker(\varrho-\id)$ and the identity \eqref{eq: anticont} implies that $\varrho\circ \Phi_t\circ \varrho=\Phi_{-t}$ for all $t\in \R$.
We shall show that $T_z\mathcal{L}_T\supset \Phi_{-T}(T_w\mathcal{L})\cap T_z\mathcal{L}$. We check that for $X\in\ \Phi_{-T}(T_w\mathcal{L})\cap T_z\mathcal{L}$,
$$
\Phi_{2T}X=\Phi_T\circ \Phi_T X=\varrho \circ \Phi_{-T}\circ \varrho \circ \Phi_T X=\varrho\circ \Phi_{-T} \circ\Phi_TX=\varrho X=X.
$$
This shows that $X\in T_z\mathcal{N}_{2T}\cap T_z\mathcal{L}=T_z\mathcal{L}_T$.
\end{proof}
We obtain an immediate corollary that will be used in Section \ref{sec: A_k} and \ref{sec: chyper}. 
\begin{Cor}\label{cor: mbtype}
Let $(\Sigma,\alpha,\rho)$ and $\mathcal{L}$ be as in Proposition \ref{prop: mbcrit}. If $\mathcal{N}_{2T}=\Sigma$ holds for all $T\in \Spec(\Sigma,\alpha,\mathcal{L})$, then Reeb chords are of Morse-Bott type.
\end{Cor}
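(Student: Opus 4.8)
The plan is to deduce the statement directly from Proposition \ref{prop: mbcrit}, by checking that its clean-intersection hypothesis is automatically met once one strengthens the assumption to $\mathcal{N}_{2T} = \Sigma$.

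First I would isolate what remains to be verified. By Proposition \ref{prop: mbcrit} it suffices to show that for every $T \in \Spec(\Sigma,\alpha,\mathcal{L})$ the submanifold $\mathcal{N}_{2T}$ and the Legendrian $\mathcal{L}$ intersect cleanly along a closed submanifold; by \eqref{eq: mbc} that submanifold is forced to be $\mathcal{L}_T = \mathcal{N}_{2T}\cap\mathcal{L}$. Discreteness of $\Spec(\Sigma,\alpha,\mathcal{L})$ is already in hand, since it sits inside $\frac{1}{2}\Spec(\Sigma,\alpha)$ and $\alpha$ is of Morse-Bott type, so there is nothing further to do on that point.

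Next, under the hypothesis $\mathcal{N}_{2T} = \Sigma$ the required clean intersection is immediate: $\mathcal{N}_{2T}\cap\mathcal{L} = \mathcal{L}$, which is a closed submanifold of $\Sigma$ because $\mathcal{L}$ is a Legendrian in the closed manifold $\Sigma$ and hence compact; and the tangential condition $T_z(\mathcal{N}_{2T}\cap\mathcal{L}) = T_z\mathcal{N}_{2T}\cap T_z\mathcal{L}$ for all $z$ reduces to $T_z\mathcal{L} = T_z\Sigma \cap T_z\mathcal{L}$, which holds trivially. Thus $\mathcal{L}_T = \mathcal{L}$ for every relevant $T$, the hypotheses of Proposition \ref{prop: mbcrit} are satisfied, and it applies verbatim to conclude that Reeb chords are of Morse-Bott type.

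I do not expect a genuine obstacle here: the corollary is a direct specialization of Proposition \ref{prop: mbcrit}. The only things to be careful about are bookkeeping, namely confirming that $\mathcal{L}_T$ as defined in \eqref{eq: mbcp} coincides with $\mathcal{N}_{2T}\cap\mathcal{L}$ (this is \eqref{eq: mbc}, which follows from the identity \eqref{eq: anticont}), and that the closedness requirement in Definition \ref{def: morsebott} is met, which is guaranteed by compactness of $\Sigma$.
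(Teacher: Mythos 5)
Your proposal is correct and is exactly the argument implicit in the paper, which states the corollary without proof as an immediate consequence of Proposition \ref{prop: mbcrit}: when $\mathcal{N}_{2T}=\Sigma$, the intersection with $\mathcal{L}$ is $\mathcal{L}$ itself, the clean-intersection and closedness conditions hold trivially, and the proposition applies.
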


\section{Fibered twists} \label{Fibered twist}
In this section, we define a special class of symplectomorphisms, called \emph{fibered twists}. Let $(W, \lambda)$ be a Liouville domain whose boundary admits a periodic Reeb flow, i.e., $Fl_T^{R_{\alpha}}= \id$ for some $T > 0$. For simplicity, we assume that the minimal period of the Reeb flow is $1$. Consider a Hamiltonian $H_\tau: \widehat W \rightarrow \R$ such that
\begin{itemize}
	\item $H_\tau$ vanishes on $W \setminus \left( [1-\epsilon, 1] \times \partial W \right)$, and
	\item $H_\tau(r, x)=h(r)$ on $[1-\epsilon, +\infty) \times \partial W$ where $h'(r)$, $h''(r) >0$ for all $r \in (1-\epsilon, 1)$ and $h'(r)=1$ for all $r \in [1, \infty)$. 
\end{itemize}
\begin{Def}
The time 1-map of the Hamiltonian diffeomorphism generated by $H_\tau$ is called a \textit{fibered twist}, denoted by $\tau: \widehat{W} \rightarrow \widehat{W}$. 
\end{Def}
By the construction, $\tau$ is the identity on $W \setminus \left([1-\epsilon, 1] \times \partial W \right)$ and $[1, +\infty) \times \partial W$. In particular, we have $\tau\in \symp^c(\widehat{W})$. A priori the definition of fibered twists depends on the choice of a Hamiltonian $H_\tau$. However, its connected component $[\tau]$ in $\pi_0(\symp^c(\widehat W))$ is well-defined. This can be shown by interpolating defining Hamiltonians.

Fibered twists have played an important role in a study of symplectic mapping class groups, see \cite{Sei}, \cite{CDvK} and \cite{Igo}. For example, $[\tau]$ has infinite order in $\pi_0(\symp^c(T^*S^n))$, whereas its order in $\pi_0(\Diff^c(T^*S^n))$ is finite for $n$ is even, see \cite[Corollary 4.5]{Sei}, \cite[Proposition 2.23]{FS05}. In \cite{FS05}, the fact that $[\tau]$ has infinite order in $\pi_0(\symp^c(T^*S^n))$ actually relies on existence of a Lagrangian, namely, a cotangent fiber. Theorem \ref{ThmB} can be regarded as a  generalized version in terms of the wrapped Floer homology. 

\section{Proof of main theorems}
\subsection{Theorem \ref{ThmA}} \label{proof}
We first need the following lemma. In the below, we denote by $D^n_\delta=\{p\in \R^n\ |\ |p|\le \delta\}$ the closed disk of radius $\delta$ in $\R^n$.

\begin{Lem} \label{family of Lag}
Let $(W, \lambda)$ be a Liouville domain and $L$ be  an admissible Lagrangian ball in $W$. Then there exists a $D_\delta^n$-family of pairwise disjoint Lagrangian balls $\{L_z\}_{z \in D^n_{\delta}}$ such that each Lagrangian $L_z$ is admissible and is Hamiltonian isotopic to $L$.
\end{Lem}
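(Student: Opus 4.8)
\emph{Strategy.} I would realize the whole family inside a Weinstein-type neighbourhood of the completed Lagrangian $\widehat L$. Since $L$ is a ball, $\widehat L$ is diffeomorphic to $\R^n$ and is cylindrical at infinity over the Legendrian $\partial L\cong S^{n-1}$. The key point is that a \emph{compactly supported} isotopy can never move the connected Lagrangian $L$ off itself, so the deformations we use must be nontrivial near infinity: concretely, translating the zero section to nearby ``horizontal'' Lagrangians in the cotangent model.

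\emph{Step 1 (adapted model neighbourhood).} First I would invoke a version of the Weinstein Lagrangian neighbourhood theorem adapted to admissible Lagrangians: there is a neighbourhood $U$ of $\widehat L$ in $\widehat W$ and a symplectomorphism $\Phi\colon U\to V$ onto a neighbourhood $V$ of the zero section in $T^*\widehat L$ (with its canonical symplectic form) such that $\Phi(\widehat L)$ is the zero section and, near infinity, $\Phi$ intertwines the cylindrical structures, i.e. it carries the Liouville vector field of $\widehat W$ to the vector field that simultaneously rescales the symplectization coordinate $r$ on the end of $\widehat L$ and the cotangent fibre coordinate over $\partial L$. A short local computation then shows that near the end a graph $\mathrm{graph}(\beta)$ of a $1$-form $\beta$ on $\widehat L$ is invariant under this field --- equivalently, corresponds under $\Phi^{-1}$ to an \emph{admissible} Lagrangian in $W$ --- exactly when $\beta=d(r f)$ for some $f\in C^{\infty}(\partial L)$. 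One also checks that $V$ widens linearly in $r$ near infinity.

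\emph{Step 2 (the family).} Fix a diffeomorphism $\iota\colon\widehat L\to\R^n$ which on the end $[1,\infty)\times\partial L$ equals $(r,q)\mapsto rq$ for the standard embedding $\partial L=S^{n-1}\hookrightarrow\R^n$, and for $z\in D^n_\delta$ set $G_z:=-\langle z,\iota\rangle\in C^\infty(\widehat L)$ and
$$
\widehat{L_z}:=\Phi^{-1}\bigl(\mathrm{graph}(dG_z)\bigr),\qquad L_z:=\widehat{L_z}\cap W .
$$
Under $\iota$, $\mathrm{graph}(dG_z)$ becomes the horizontal Lagrangian $\R^n\times\{-z\}\subset T^*\R^n$. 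Hence for $z\neq z'$ the function $G_z-G_{z'}=\langle z'-z,\iota\rangle$ has no critical point (as $d\iota$ is everywhere an isomorphism), so the graphs, and therefore the $\widehat{L_z}\subset U$, are pairwise disjoint; the same holds for the compact pieces $L_z\subset W$. On the end $G_z=-r\langle z,q\rangle=r f_z(q)$ with $f_z=-\langle z,\cdot\rangle$, so Step 1 gives that each $\widehat{L_z}$ is conical near $\partial W$, meets $\partial W$ transversally in a Legendrian sphere, and is exact; thus $L_z$ is admissible, and it is a ball (it is $\widehat{L_z}\cong\R^n$ with its single cylindrical end removed). Taking $\delta$ small keeps $\mathrm{graph}(dG_z)$ inside $V$, so everything is well defined, and $L_0=L$.

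\emph{Step 3 (Hamiltonian isotopy), and the main obstacle.} For fixed $z$ the path $t\mapsto\widehat{L_{tz}}=\Phi^{-1}(\R^n\times\{-tz\})$ is an exact Lagrangian isotopy from $\widehat L$ to $\widehat{L_z}$ through admissible Lagrangians, generated in the model by the $r$-independent Hamiltonian $(y,p)\mapsto\langle z,y\rangle$, which on the end of $\widehat W$ is linear in $r$. Multiplying $\langle z,y\rangle\circ\Phi$ by a cutoff in the fibre directions of $U$ (but not in $r$) yields a Hamiltonian on all of $\widehat W$ whose time-$t$ flow sends $\widehat L$ to $\widehat{L_{tz}}$; hence $L_z$ is Hamiltonian isotopic to $L$ (by an isotopy that need not be compactly supported, which is all that is required). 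I expect the delicate point to be Step 1 --- getting the neighbourhood $\Phi$ to respect the Liouville/cylindrical structure near $\partial W$, so admissibility can be read off inside the model --- together with the matching point in Step 3 that the generating Hamiltonian really extends over $\widehat W$ without destroying the conical behaviour of the $\widehat{L_{tz}}$; the interior bookkeeping (smallness of $\delta$, diffeomorphism type, exactness) is routine.
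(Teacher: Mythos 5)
Your proposal is correct in substance and is essentially the paper's own argument, written in the transposed model. Where you identify a neighbourhood of the completed Lagrangian $\widehat L$ with a neighbourhood of the zero section of $T^*\widehat L$ and take the graphs of $dG_z$, the paper (Lemma \ref{neighborhood of Lag}) identifies a neighbourhood of the compact $L$ with $(D^n_\delta\times D^n,\,dp\wedge dq)$ with $\Phi^*\lambda=p\,dq$ near $\partial W$, so that $L$ becomes the ``cotangent fibre'' $\{0\}\times D^n$ of the transverse disk and the family is the parallel fibres $L_z=\Phi(\{z\}\times D^n)$; the two pictures are exchanged by the base--fibre swap, and the remaining steps coincide: pairwise disjointness, admissibility near $\partial W$ (your computation that the conical exact graphs are exactly $\mathrm{graph}\,d(rf)$ with $f\in C^\infty(\partial L)$ is consistent with the paper's $p\,dq$ normal form, under which the parallel Lagrangians have Legendrian boundary and tangent Liouville field), and the Hamiltonian isotopies generated by cut-off Hamiltonians that are linear in the model coordinates and need not be compactly supported. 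The one genuine difference is where the work lies: the statement you assume in Step 1 --- a Lagrangian neighbourhood theorem adapted to the Liouville structure near $\partial W$, equivalently cylindrical at infinity --- is precisely the content of the paper's Lemma \ref{neighborhood of Lag}, and almost all of the paper's proof is devoted to it: one starts from the standard-form neighbourhood of the Legendrian sphere $\partial L\subset(\partial W,\alpha)$ with $\Psi^*\alpha=p\,dq$, extends it over the collar, extends over the interior of $L$ by an exponential-map construction, and then corrects the symplectic form by a Moser isotopy that is the identity near $\partial W$, so the primitive $p\,dq$ is preserved there. So your outline identifies the right delicate point but leaves it as an unproved normal-form assertion; supplying that argument (or a citable reference for the cylindrical Weinstein theorem) is what is needed to make the proof complete, after which your Steps 2--3 go through as described.
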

Note that this obviously holds for cotangent fibers in a disk cotangent bundle over a smooth manifold. A proof follows from a suitable Lagrangian neighborhood theorem which is a typical application of the Moser trick:
\begin{Lem} \label{neighborhood of Lag}
Let $(W^{2n}, \lambda)$ be a Liouville domain and let $L$ be an admissible Lagrangian ball in $W$. Then there exist a tubular neighborhood $\nu_W (L)$ of $L$ in $W$ and a symplectomorphism
$$
\Phi: (D_\delta^n \times D^n, dp \wedge dq) \longrightarrow (\nu_W (L), d\lambda|_{\nu_W(L)})
$$
such that $\Phi(\{0\} \times D^n)=L$ and $\Phi^*\lambda=pdq$ near $\p W$. 
\end{Lem}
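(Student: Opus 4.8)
The statement is a relative version of Weinstein's Lagrangian neighborhood theorem, and the plan is to prove it by the Moser deformation argument, the extra care being needed near $\p W$ so that $\lambda$ itself --- not merely $d\lambda$ --- is in normal form there. Two geometric inputs drive the proof. First, since $X_\lambda$ is tangent to $L$ near $\p L$ and $\p L$ is Legendrian, identifying a collar of $\p W$ with $([1-\epsilon,1]\times\p W, r\alpha)$ via the Liouville flow forces $L\cap([1-\epsilon,1]\times\p W) = [1-\epsilon,1]\times\p L$ and $\lambda|_L \equiv 0$ on this collar; in particular a primitive $f$ of $\lambda|_L$ can be chosen to vanish near $\p L$. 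Second, $L\cong D^n$ is a ball, so $T^*L$ is trivial and the bundle over $L$ whose fiber is the (affine, hence contractible) space of Lagrangian complements to $TL$ in $TW|_L$ is trivial over the contractible base $L$, hence admits a global section.

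First I would construct the underlying diffeomorphism. Applying the Legendrian neighborhood theorem to $\p L\subset(\p W,\alpha)$ --- no obstruction arises, since $\p L\cong S^{n-1}$ has $H^1(\p L;\R)=0$ --- and passing to symplectizations, one gets a symplectic embedding $\Phi_{\p}$ of a neighborhood of $\{0\}\times(\text{collar of }\p D^n)$ in $(D^n_\delta\times D^n, dp\wedge dq)$ onto a neighborhood of $L\cap([1-\epsilon,1]\times\p W)$ in $\widehat W$, carrying $\{0\}\times(\text{collar of }\p D^n)$ to $L$ and satisfying $\Phi_{\p}^*\lambda = pdq$ there. Using a fiberwise-linear trivialization of a neighborhood of the zero section $0_L\subset T^*L$ together with the global Lagrangian complement of the first step (extending the one used near $\p L$), I would then extend $\Phi_{\p}$ to a diffeomorphism $\phi_0\colon D^n_\delta\times D^n\to\nu_W(L)$ with $\phi_0 = \Phi_{\p}$ near $\p W$, $\phi_0(\{0\}\times D^n) = L$, and $d\phi_0$ a symplectic isomorphism along $\{0\}\times D^n$.

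The Moser trick then finishes it. Put $\omega_0 = dp\wedge dq$ and $\omega_1 = \phi_0^*d\lambda$; these coincide along $\{0\}\times D^n$ and on a neighborhood of $\p W$. The relative Poincar\'e lemma gives $\omega_1 - \omega_0 = d\beta$ with $\beta$ vanishing to first order along $\{0\}\times D^n$ and $\beta\equiv 0$ near $\p W$. For $\delta$ small the forms $\omega_t := (1-t)\omega_0 + t\omega_1$ are symplectic for all $t\in[0,1]$, and the time-dependent vector field $Y_t$ defined by $\iota_{Y_t}\omega_t = -\beta$ vanishes along $\{0\}\times D^n$ and near $\p W$; let $\psi_t$ be its flow, defined on a possibly smaller neighborhood of $\{0\}\times D^n$. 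Then $\Phi := \phi_0\circ\psi_1$ satisfies $\Phi^*d\lambda = dp\wedge dq$ and $\Phi(\{0\}\times D^n) = L$, and since $\psi_1 = \id$ near $\p W$ we have $\Phi = \Phi_{\p}$ there, so $\Phi^*\lambda = pdq$ near $\p W$. Shrinking $\delta$ once more makes $\Phi$ a genuine symplectic embedding of $D^n_\delta\times D^n$, which is the assertion.

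The main obstacle is precisely this boundary normalization: the textbook Weinstein theorem only yields $\Phi^*d\lambda = dp\wedge dq$ and says nothing about $\lambda$ near $\p W$. Getting $\Phi^*\lambda = pdq$ near $\p W$ requires both the identification of $(L,\lambda)$ near $\p L$ with the symplectization model via the Legendrian neighborhood theorem (using that $\lambda|_L$ vanishes there, not just that it is exact) and the bookkeeping that lets one take $\beta$ --- and hence the Moser vector field --- to vanish identically near $\p W$, so that the deformation does not disturb the normal form already in place. The remaining ingredients (triviality of $T^*L$, contractibility of the space of Lagrangian complements, the relative Poincar\'e lemma, and existence of the Moser flow after shrinking $\delta$) are routine.
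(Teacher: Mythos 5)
Your proposal is correct and follows essentially the same route as the paper's proof: a strict Legendrian neighborhood theorem for $\partial L\cong S^{n-1}$, extended over the symplectization collar so that $\lambda$ is normalized to $pdq$ near $\partial W$, then an extension to a diffeomorphism of $D^n_\delta\times D^n$ taking the zero section to $L$, symplectic along it, and agreeing with the boundary model, followed by a Moser deformation whose vector field vanishes near $\partial W$. The only deviations are implementational: the paper builds the extension via an adapted almost complex structure and its exponential map rather than a global Lagrangian complement and a trivialization of $T^*L$, and it uses the explicit Moser primitive $pdq-\tilde\phi^*\lambda$ (which vanishes near $\partial W$) instead of a relative Poincar\'e lemma primitive vanishing along the zero section --- your choice has the mild advantage that the Moser flow fixes the zero section pointwise, which is the cleaner bookkeeping when $\lambda|_L$ is only exact rather than identically zero.
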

\begin{proof}
Consider a symplectic manifold $(\mathbb{R}^n \times D^n, dp \wedge dq=\sum_jdp_j\wedge dq_j)$ where $(q_1,\cdots,q_n,p_1,\cdots,p_n)$ are  coordinates on $\mathbb{R}^n\times D^n$. Then $(\mathbb{R}^n \times S^{n-1}, pdq)$ is a contact manifold and $\{0\} \times S^{n-1}$ is a Legendrian sphere. Since $\partial L$ is a Legendrian sphere of a contact manifold $(\p W, \alpha=\lambda|_{\p W})$, there exists $\delta_0>0$ and a contactomorphism
$$\Psi: D^n_{\delta_0} \times S^{n-1} \rightarrow \nu_{\p W}(\partial L)$$
such that $\Psi^*\alpha=pdq$, see \cite[Theorem 6.2.2]{Gei}. We extend this map to each tubular neighborhood in the following way:
$$\tilde{\Psi}: D^n_{\delta_0} \times [1-\epsilon, 1] \times S^{n-1} \longrightarrow [1-\epsilon, 1] \times \nu_{\p W}(\partial L), \quad (q, r, p) \longmapsto (r, \Psi(q, p)).$$
We denote $A^n(1-\epsilon,1):=\{p \in D^n \ | \ 1-\epsilon \le |p| \le 1\}$ and identify $A^n(1-\epsilon,1)$ with $[1-\epsilon,1]\times S^{n-1}$ via the map $p \mapsto (|p|,\frac{p}{|p|})$. Under this identification it satisfies $\tilde{\Psi}(r\alpha)=pdq$. Let $g_0$ be a standard Euclidean metric on $\mathbb{R}^n \times D^n$. This defines a metric $g=\tilde{\Psi}_* g_0$ on $[1-\epsilon, 1] \times \nu_{\p W}(\partial L)$. Let $J$ be the almost complex structure on $[1-\epsilon, 1] \times \nu_{\p W}(\partial L)$ corresponding to $g$. Note that $J=\tilde{\Psi}_* J_0$ where $J_0$ is the standard complex structure on $\mathbb{R}^n \times D^n$. We extend $J$ to a almost complex structure $\tilde{J}$ on a neighborhood $\nu_W (L)$ of $L$. We take a diffeomorphism
$$\phi: D^n \rightarrow L$$
satisfying $\phi(p)=\tilde{\Psi}(0, |p|, {p \over |p|})$ for all $p$ with $|p|\ge 1-\epsilon$. We define a bundle map $I_g: \mathbb{R}^n \times D^n \rightarrow TL$ over $\phi$ such that the diagram
\begin{gather*}
\xymatrix
{
\mathbb{R}^n \times D^n \ar[rr]^{I_g} \ar[d] & & TL \ar[d]
\\ 
D^n \ar[rr]^{\phi} & & L
}
\end{gather*}
commutes and $g_{\phi(p)}(I_g(p)[q], v)=\left<q, (T_p\phi)^{-1}(v)\right>$ for all $p \in D^n$ and $v \in T_{\phi(p)} L$. We construct a map
$$
\tilde{\phi}: D_\delta^n \times D^n \longrightarrow \nu_W (L), \quad (q, p) \longmapsto \exp_{\phi(p)}\left(\tilde{J}I_g(p) [ q ] \right)
$$ 
where the exponential corresponds to the metric induced by the almost complex structure $\tilde{J}$. This map is diffeomorphism by choosing sufficiently small $0<\delta \le \delta_0$ and sufficiently small neighborhood $\nu_W (L)$ of $L$. Note that the differential of $\tilde{\phi}$ at $(0, p)$ is given by
$$d\tilde{\phi}(0, p) \left[ a{\partial \over \partial q}+b{\partial \over \partial p} \right]=d\phi(p)\left[ b{\partial \over \partial p} \right]+\tilde{J} I_g(p) \left[ a \right]$$
It follows that
\begin{eqnarray*}
&& \tilde{\phi}^* \omega_{(0, p)} \left( a_1{\partial \over \partial q}+b_1{\partial \over \partial p}, a_2{\partial \over \partial q}+b_2{\partial \over \partial p} \right) \\
&=& \omega_{\phi(p)}\left( d\phi(p)\left[ b_1{\partial \over \partial p} \right]+\tilde{J}I_g (p)\left[ a_1 \right], d\phi(p)\left[ b_2{\partial \over \partial p} \right]+\tilde{J}I_g(p) \left[ a_2 \right]\right) \\
&=& \omega_{\phi(p)}\left( d\phi(p)\left[ b_1{\partial \over \partial p} \right], \tilde{J}I_g(p) \left[ a_2 \right] \right)-\omega_{\phi(p)}\left( d\phi(p)\left[ b_2{\partial \over \partial p} \right], \tilde{J}I_g(p) \left[ a_1 \right] \right) \\
&=& g_{\phi(p)}\left( d\phi(p)\left[ b_1{\partial \over \partial p} \right], I_g(p) \left[ a_2 \right] \right)-g_{\phi(p)}\left( d\phi(p)\left[ b_2{\partial \over \partial p} \right], I_g(p) \left[ a_1 \right] \right) \\
&=& \left<a_2, b_1 \right>-\left<a_1, b_2 \right>=dp \wedge dq \left( a_1{\partial \over \partial q}+b_1{\partial \over \partial p}, a_2{\partial \over \partial q}+b_2{\partial \over \partial p} \right).
\end{eqnarray*}
Thus $\tilde{\phi}^* \omega$ and $dp \wedge dq$ agree at the zero section $\{0\} \times D^n$. Since $\tilde{\phi}^* \omega-dp \wedge dq=d(\tilde{\phi}^* \lambda-pdq)$, we can apply Moser trick to $\omega_t=t \tilde{\phi}^* \omega+ (1-t) dp \wedge dq$. We want to have a family of diffeomorphism $\rho_t : D^n_\delta \times D^n  \rightarrow D^n_\delta \times D^n$ such that $\rho_t^* \omega_t=\omega_0=dp \wedge dq$. It is enough to find a family of vector field $v_t$ generating $\rho_t$ and this condition is written as
$$0={d \over dt}(\rho_t^* \omega_t)=\rho_t^* (L_{v_t}\omega_t+{d \over dt}\omega_t).$$
This is satisfied if the identity $\iota_{v_t} \omega_t=pdq-\tilde{\phi}^* \lambda$ holds. Since $pdq-\tilde{\phi}^* \lambda=0$ near the boundary, we have $v_t=0$ near the boundary and so $\rho_t$ is identity near the boundary. Thus $\Phi=\tilde{\phi} \circ \rho_1$ is a desired map.
\end{proof}

We are ready to prove Lemma \ref{family of Lag}.
\begin{proof}[Proof of Lemma \ref{family of Lag}]
By Lemma \ref{neighborhood of Lag}, we have a symplectomorphism
$$
\Phi: (D_\delta^n \times D^n, dp \wedge dq) \rightarrow (\nu_W (L), d\lambda|_{\nu_W(L)})
$$
satisfying $\Phi(\{0\} \times D^n)=L$ and $\Phi^*\lambda=pdq$ near $\p W$. Consider a family of Lagrangian $\{\{ z \} \times D^n\}_{z\in D_\delta^n}$ in $(D_\delta^n \times D^n, dp \wedge dq)$. Then, each Lagrangian $\{z\}\times D^n$ is Hamiltonian isotopic to $\{0\} \times D^n$ by the Hamiltonian flow of $H_z(q, p)=z \cdot p$. We define a family of Lagrangians by
$$
L_z :=\Phi(\{ z \} \times D^n)\quad \text{for $z\in D_{\delta}^n$}.
$$
Each Lagrangian $L_z$ is Hamiltonian isotopic to $L_0$ in $W$ by considering a suitable cut-off function with a neighborhood of $\nu_W (L)$. Since $\Phi$ preserves primitive 1-forms near the boundary and the boundary of $\{z\} \times D^n$ is a Legendrian sphere with tangential Liouville vector field, the boundary of $L_z$ is also a Legendrian sphere with tangential Liouville vector field near the boundary for every $z \in D^n_\delta$.
\end{proof}
We define the notion of a linear growth of the wrapped Floer homology.
\begin{Def}\label{def: lineargrowth}
For an admissible Lagrangian $L$ in a Liouville domain $(W,\lambda)$, we say that its wrapped Floer homology $\WFH_*(L;W)$ has a \textit{linear growth} if the limit
\begin{equation}\label{eq: growth}
\liminf_{c\rightarrow \infty} {\dim \WFH^{< c}_*(L; W) \over c}
\end{equation}\label{growthrateintro}
is positive.
\end{Def}
\begin{Rem}\
\begin{itemize}
	\item The limit \eqref{eq: growth} measures a growth of the dimension of $\WFH_*(L;W)$ along the action filtration. 
	\item If the contact boundary $(\p W,\lambda|_{\p W})$ admits a periodic Reeb flow, then the limit \eqref{eq: growth} is always finite by Theorem \ref{thm: ss}
	\item If $\WFH_*(L;W)$ has a linear growth, then we  obtain $\dim \WFH_*(L;W)=\infty$.
\end{itemize}
\end{Rem}

We now give a proof of Theorem \ref{ThmA}.

\begin{proof}[Proof of Theorem \ref{ThmA}]
Without loss of generality, we may assume that the minimal period of the Reeb flow is 1. The condition $H^1_c(W;\R)\cong H^1_c(\widehat{W}; \mathbb{R})=0$ imposes that $\text{Ham}^c(\widehat{W})=\symp_0^c(\widehat{W})$. Let $\phi$ be a compactly supported symplectomorphism satisfying $[\phi]=[\tau^k] \in \pi_0(\symp^c(\widehat{W}))$ for some $k \ne 0$. We may assume that $\phi$ has its support in $W$ by rescaling, so we have $\tau^{mk}\phi^{-m}\in \text{Ham}^c(W)$. Let $\{ L_z \}_{z \in D^n_\delta}$ be a family of pairwise disjoint Lagrangians which are Hamiltonian isotopic to each other as in Lemma \ref{family of Lag}. We verify that
\begin{eqnarray*}
\LFH_*(L_z, \phi^m(L)) &\cong& \LFH_*(L,\phi^m(L)) \quad \text{by Lemma \ref{lem: secondinv}}\\
&\cong& \LFH_* (L, \tau^{mk}(L)) \quad \text{by Lemma \ref{lem: firstinv}}. 
\end{eqnarray*}
Choose $\epsilon>0$ small enough so that $l+\epsilon\notin \text{Spec}(\p W,\alpha,\p L)$ for all $l\in \N\cup \{0\}$. Combining with the properties \ref{p1} and \ref{p2}, we obtain an isomorphism
$$
\LFH_*(L_z,\phi^m(L))\cong \WFH_*^{<mk+\epsilon}(L;W)
$$
for all $m\in \N$ and $z\in D_\delta^n$. Let
$$
b:=\liminf_{c\rightarrow \infty} {\dim \WFH^{< c}_*(L; W) \over c}>0.
$$
Then, we have
$$
\dim \WFH_*^{mk+\epsilon}(L;W)\ge \frac{b}{2} (mk+\epsilon)
$$
for each $m\in \N$ by choosing a subsequence if necessary. We assume that $L_z$ and $\phi^m(L)$ intersect transversally. By the property \ref{p3}, the manifold with boundary $\phi^m(L)$ intersects $L_z$ at least $\lfloor {b \over 2} (mk+\epsilon) \rfloor$-times for every $z \in D^n_{\delta}$ and $m \in \mathbb{N}$. We consider the standard metric $g_0$ on $D_\delta^n \times D^n$. Its push-forward $\Phi_* g_0$ defines a metric on a neighborhood $\nu_W (L)$ of $L$. This extends to a metric $g$ on $W$. With this metric, the $n$-dimensional measure $\mu_g(\phi^m(L))$ of $\phi^m(L)$ is greater than $\lfloor {b \over 2} (mk+\epsilon) \rfloor \cdot \mu_{g_0}(D_\delta^n)$. Therefore, we have that
$$\mu_g(\phi^m(L)) \ge {b \over 2} (mk+\epsilon) \Delta -\Delta$$
where $\Delta=\mu_{g_0}(D_\delta^n)>0$. 

If $L_z$ and $\phi^m(L)$ are not transverse, then we follow the idea in \cite[Section 2.3]{FS05}. Choose a sequence $\phi_i\in \symp^c(W)$ such that
\begin{itemize}
	\item $\phi_i^m(L)$ and $L_z$ are transverse for all $i$, and
	\item $\phi_i\to \phi$ in the $C^\infty$-topology.
\end{itemize}
We then verify that $[\phi_i]=[\phi]\in \pi_0(\symp^c(W))$ for large $i$, and
$$
\mu_g(\phi^m(L))=\lim_{i\to \infty}\mu_g(\phi_i^m(L))\ge \frac{b}{2}(mk+\epsilon)\Delta-\Delta.
$$
Consider a smooth embedding $\sigma: Q^n \rightarrow \widehat{W}$ such that $\sigma(Q^n)$ contains the Lagrangian ball $L$. By definition of the slow volume growth $s_n$, we have 
$$s_n(\phi) \ge \liminf_{m \rightarrow +\infty}{\log \mu_g(\phi^m (\sigma)) \over \log m} \ge \liminf_{m\rightarrow +\infty} {\log \mu_g(\phi^m (L)) \over \log m}.$$
Since $b>0$, the $n$-dimensional slow volume growth $s_n(\phi)$ satisfies the inequality
$$\liminf_{m\rightarrow +\infty} {\log \mu_g(\phi^m (L)) \over \log m} \ge \liminf_{m\rightarrow +\infty} {\log ({b \over 2} (mk+\epsilon) \Delta -\Delta) \over \log m} =1.$$
This completes the proof of Theorem \ref{main thm}.
\end{proof}

\subsection{Theorem \ref{ThmB}}
In fact, we prove the following refined statement. Theorem \ref{ThmB} follows immediately.
\begin{Thm} \label{WFH and fib}
Let $(W,\lambda)$ be a Liouville domain with a periodic Reeb flow on the boundary and assume that $H^1_c(W,\R)=0$. If a class $[\tau]$ of a fibered twist $\tau:\widehat{W}\to \widehat{W}$ has a finite order in $\pi_0(\symp^c(\widehat{W}))$, then we have
$$
\dim \WFH_* (L; W) \le \dim H_*(L,\p L)<\infty
$$
for any admissible Lagrangian $L$ in $W$.
\end{Thm}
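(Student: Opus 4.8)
The plan is to turn the finite-order hypothesis into the statement that, along a cofinal sequence of action levels, the truncated wrapped Floer homology $\WFH_*^{<c}(L;W)$ agrees with the low-energy part $\cong H_{*+n}(L,\partial L)$, and then to pass to the direct limit.

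First I would fix $N\ge 1$ with $[\tau]^N=[\id]$ in $\pi_0(\symp^c(\widehat W))$ and, after conjugating $\tau$ by a Liouville scaling (which does not change its class in $\pi_0(\symp^c(\widehat W))$), assume that $\tau$ is supported in a compact subset of the interior of $W$. Since $H^1_c(W;\R)=H^1_c(\widehat W;\R)=0$, the flux homomorphism vanishes, so $\symp^c_0(\widehat W)=\ham^c(\widehat W)$; hence $\tau^N\in\ham^c(W)$, and therefore $\tau^{jN}=(\tau^N)^j\in\ham^c(W)$ for every $j\in\N$. As in the proof of Theorem~\ref{ThmA}, I would then pick $\epsilon>0$ small enough that $l+\epsilon\notin\Spec(\partial W,\alpha,\partial L)$ for all $l\in\N\cup\{0\}$, so that $(jN+\epsilon)_{j\in\N}$ is a cofinal sequence of regular truncation levels.

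Next, for each $j$ I would reuse the geometric identification already carried out in the proof of Theorem~\ref{ThmA}: combining the properties \ref{p1} and \ref{p2} with the fact that the time-$1$ map of a nondegenerate admissible Hamiltonian of slope $jN+\epsilon$ carries $L$ to a Lagrangian exact isotopic to $\tau^{jN}(L)$ rel $\partial W$, one obtains
$$\WFH_*^{<jN+\epsilon}(L;W)\;\cong\;\LFH_*\bigl(L,\tau^{jN}(L)\bigr).$$
Since $\tau^{jN}\in\ham^c(W)$, the invariance property \ref{p4} (Lemma~\ref{lem: firstinv}) gives $\LFH_*(L,\tau^{jN}(L))\cong\LFH_*(L,L)$ up to a degree shift. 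Finally, by the definition of $\LFH_*(L,L)$ through a small positive isotopy, together with \ref{p1}, \ref{p2} and the grading normalization recorded in Remark~\ref{rem: mbss}, $\LFH_*(L,L)\cong\WFH_*^{<\epsilon'}(L;W)\cong H_{*+n}(L,\partial L)$ for $\epsilon'>0$ small. Writing $d:=\dim H_*(L,\partial L)<\infty$, this yields $\dim\WFH_*^{<jN+\epsilon}(L;W)=d$ for every $j$.

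It then remains to pass to the limit. Since $\WFH_*(L;W)=\varinjlim_c\WFH_*^{<c}(L;W)=\varinjlim_j\WFH_*^{<jN+\epsilon}(L;W)$, and a direct limit of vector spaces of dimension at most $d$ has dimension at most $d$ — the images of the $\WFH_*^{<jN+\epsilon}(L;W)$ in the limit form an increasing chain of subspaces of dimension $\le d$, which therefore stabilizes — we conclude $\dim\WFH_*(L;W)\le d=\dim H_*(L,\partial L)<\infty$, as desired. The one step requiring genuine care is the isomorphism $\WFH_*^{<jN+\epsilon}(L;W)\cong\LFH_*(L,\tau^{jN}(L))$, which identifies the action-truncated wrapped Floer homology with the Lagrangian Floer homology of $L$ and its iterated fibered-twist image; but this is exactly the identification established in the proof of Theorem~\ref{ThmA}, so the present argument is essentially its combination with the Hamiltonian invariance \ref{p4} and the low-energy computation of $\WFH$.
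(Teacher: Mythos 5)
Your proof is correct and follows essentially the same route as the paper: finite order together with $H^1_c(W;\R)=0$ gives $\tau^{N}\in\ham^c(W)$, each stage of a cofinal family is identified with $H_*(L,\p L)$ via \ref{p2} and the Hamiltonian invariance \ref{p4}, and the direct limit is then bounded by $\dim H_*(L,\p L)$. The only difference is presentational: the paper works with the explicit cofinal Hamiltonians $H_N=N\cdot H_\tau+D$ and their continuation maps, rather than with the action-truncated groups $\WFH_*^{<jN+\epsilon}(L;W)$, which by \ref{p1} amounts to the same thing.
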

\begin{proof}
As in the proof of Theorem \ref{ThmA}, we know $\text{Ham}^c(\widehat{W})=\symp_0^c(\widehat{W})$ due to $H^1_c(W;\R)=0$.
Without loss of generality, we may assume that $[\tau]=[\id]\in \pi_0(\symp^c(\widehat{W}))$. Choose an admissible Hamiltonian $D: \widehat{W} \rightarrow \mathbb{R}$ of slope $\epsilon<\text{min} (\text{Spec}(\p W,\alpha, \p L))$ such that $D$ is $C^2$-small on $W$ and $D(r,x)=d(r)$ on $[1-\epsilon,1]\times \p W$ for some smooth function $d$. Let $H_N=N\cdot H_\tau+D$ for $N\in \N\cup\{0\}$, where $H_\tau$ is a Hamiltonian that generates a fibered twist $\tau$, see Section \ref{Fibered twist}. One can check that $$Fl_1^{X_{H_N}} = Fl_1^{X_{N \cdot H}} \circ Fl_1^{X_D}=\tau^N\circ Fl_1^{X_D}.$$ 
By \ref{p2} and Floer \cite{Floer}, we obtain
$$
\WFH_*(L;H_0) \cong \LFH_*(L, Fl_1^{X_D}(L)) \cong H_*(L,\p L),
$$
possibly up to a grading shift. From now on, all isomorphisms below will be understood up to grading shift. We now verify that for $N\ge 0$,
\begin{eqnarray*}
\WFH_*(L; H_0) &\cong& \LFH_*(L, Fl_1^{X_D}(L)) \ \quad\quad\quad \text{by \ref{p2}}\\
&\cong& \LFH_*(L, \tau^N \circ Fl_1^{X_D}(L))	\quad \text{by \ref{p4}}\\
&\cong& \LFH_*(L, Fl^{X_{H_N}}_1(L)) \\
&\cong& \WFH_*(L; H_N) \quad\quad\quad\qquad \text{by \ref{p2}}.
\end{eqnarray*}
Hence, we see that 
$\WFH_*(L; H_N) \cong H_*(L,\p L)$
for all $N \in \mathbb{N}$. This fits into the following sequence of maps
$$
\xymatrix{
\WFH_*(L; H_0) \ar[r]^{\Phi_{01}} \ar[d]^{\cong} & \WFH_*(L; H_1) \ar[r]^{\Phi_{12}} \ar[d]^{\cong} & \WFH_*(L; H_2) \ar[r]^{\Phi_{23}} \ar[d]^{\cong} & \WFH_*(L; H_3)  \ar[r] \ar[d]^{\cong} & \cdots
\\ H_*(L,\p L)  & H_*(L,\p L)  & H_*(L,\p L)  & H_*(L,\p L)  &
}
$$
where $\Phi_{N,N+1}$'s are the continuation maps. Since the family $\{H_N \}_{N\ge 0}$ is cofinal in the set of admissible Hamiltonians, we have
$$\WFH_*(L; W)= \lim_{\underset{N}{\longrightarrow}} {\WFH_*(L; H_N)}.$$
This implies the desired inequality $\dim \WFH_*(L; W) \le \dim H_*(L,\p L)$.
\end{proof}
A simple example which fits the above theorem is the following.
\begin{Ex}
The fibered twists on $\C^n$ with the standard Liouville structure are compactly supported symplectically isotopic to the identity. Note that $\WFH_*(\R^n; \C^n)$ vanishes, where $\R^n$ denote the real Lagrangian, see \cite[Proposition 2.9]{Iri13} for computation. In particular $\WFH_*(\R^n; \C^n)$ is finite dimensional.
\end{Ex}



\section{Cotangent bundles over CROSSes} \label{example}
Theorem \ref{main thm} can be applied to the cotangent bundle $(T^* B, d \lambda_\text{can})$ over a compact rank one symmetric space $B$ (shortly we write CROSS), where $\lambda_\text{can}$ denotes a canonical 1-form. The CROSSes are closed manifolds
$$
S^n, \quad \R P^n, \quad \C P^n, \quad \mathbb{H} P^n,\quad C a P^n
$$
with their canonical metrics whose geodesics are all periodic with the same period. By Wadsley's theorem, geodesics of a CROSS admit a common period and we normalized the metric such that the minimal common period is 1. We refer to \cite[Section 2.2]{FS05} for more details on CROSSes. Since all geodesics on $B$ are periodic and the Reeb flow on the unit cotangent bundle ($ST^*B,\alpha:=\lambda_\text{can}|_{ST^*B})$ is given by the geodesic flow, the Reeb flow on the boundary of the disk cotangent bundle over $B$ is periodic. Denote by $W:=\{(q, p) \in T^* B \min |p|_g \le 1\}$ the disk cotangent bundle over $B$, where $|\cdot |_g$ denotes the norm  induced by the canonical metric $g$ on $B$. Then, $(W,\lambda_\text{can})$ is a Liouville domain with a periodic Reeb flow on the boundary. Let $L$ be a disk cotangent fiber at $x\in B$ and $\p L=ST_x^*B$ a Legendrian in the unit cotangent bundle $\p W=ST^*B$. 
\begin{Lem}
On $ST^*B$, Reeb chords are of Morse-Bott type. More precisely, we have
\begin{enumerate}[label=(\arabic*)]
	\item $\Spec(\p W,\alpha,\p L)=T_0\cdot \N$ where $T_0$ is the minimal period of the Reeb flow.
	\item For $T\in \Spec(\p W,\alpha,\p L)$, a Morse-Bott component $\mathcal{L}_T$ is diffeomorphic to $\p L\cong S^{n-1}$.
\end{enumerate}
\end{Lem}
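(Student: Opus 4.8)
The plan is to translate the statement into facts about closed geodesics on $B$ and then to exploit that the Reeb flow on $ST^*B$ is \emph{strictly} periodic, which trivializes the Morse-Bott conditions.

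First I would use the bundle isomorphism $T^*B\cong TB$ induced by the metric to conjugate the Reeb flow of $\alpha=\lambda_{\mathrm{can}}|_{ST^*B}$ to the unit-speed geodesic flow on the unit tangent bundle. Under this identification a Reeb chord $c:[0,T]\to ST^*B$ with $c(0),c(T)\in\partial L=ST_x^*B$ is exactly a unit-speed geodesic loop $\gamma:[0,T]\to B$ based at $x$ (with no constraint on the endpoint velocities). Hence $\Spec(\partial W,\alpha,\partial L)$ is precisely the set of lengths of geodesic loops based at $x$, and for such a length $T$ the Morse-Bott set $\mathcal{L}_T$ of \eqref{eq: mbcp} is the set of initial unit covectors in $ST_x^*B$ whose geodesic returns to $x$ at time $T$.

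Next I would compute this loop spectrum. Since $B$ is a CROSS, all of its geodesics are closed with common minimal period $T_0$, and $B$ is a Blaschke manifold, so $\mathrm{injrad}(B)=T_0/2$ (see \cite[Section 2.2]{FS05}); thus every geodesic loop based at $x$ has length $\ge 2\,\mathrm{injrad}(B)=T_0$. Given a geodesic loop of length $T$, write $T=kT_0+r$ with $k\ge 0$ and $0\le r<T_0$; since the loop extends to a $T_0$-periodic geodesic, $\gamma(r)=\gamma(T)=x$, so $\gamma|_{[0,r]}$ would be a geodesic loop of length $r\in(0,T_0)$ unless $r=0$; since there are no such short loops, $r=0$ and $T\in T_0\,\N$. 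Conversely any geodesic through $x$ returns to $x$ at every multiple of $T_0$, so $\Spec(\partial W,\alpha,\partial L)=T_0\,\N$; in particular this spectrum is discrete.

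Finally, the Morse-Bott type property would follow from periodicity alone. For $T=kT_0$ in the spectrum, $Fl_T^{R_\alpha}=\id$ on all of $\partial W$, so by \eqref{eq: mbcp}
\[
\mathcal{L}_T=\{z\in\partial L:\ Fl_T^{R_\alpha}(z)\in\partial L\}=\partial L=ST_x^*B\cong S^{n-1},
\]
which is assertion (2). Moreover $Fl_{-T}^{R_\alpha}(\partial L)=\partial L$, so this submanifold meets $\partial L$ cleanly along $\mathcal{L}_T=\partial L$ in the trivial way, and together with the discreteness established above this verifies Definition \ref{def: morsebott}. The only step with real content is the computation of the loop spectrum: the Morse-Bott verification is automatic once the Reeb flow is known to be $T_0$-periodic, so the crux is ruling out geodesic loops shorter than $T_0$, which rests on the Blaschke property of CROSSes.
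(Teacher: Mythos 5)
Your proof is correct, and its overall shape coincides with the paper's: identify Reeb chords with endpoints on $\partial L=ST^*_xB$ with unit-speed geodesic loops based at $x$, show the spectrum is $T_0\cdot\N$, and then observe that $Fl^{R_\alpha}_{T_0}=\id$ makes both the description $\mathcal{L}_T=\partial L\cong S^{n-1}$ and the clean-intersection condition immediate. The only difference lies in how the spectrum is pinned down. The paper uses the fact that geodesics of a CROSS are \emph{embedded circles} of equal length: a geodesic loop at $x$ then necessarily closes up smoothly, so every Reeb chord is itself a closed Reeb orbit and its length is a multiple of $T_0$. You instead invoke the Blaschke property $\mathrm{injrad}(B)=T_0/2$ (equivalently, the Klingenberg-type bound that a geodesic loop at $x$ has length at least $2\,\mathrm{injrad}$) to exclude loops of length in $(0,T_0)$, and then reduce modulo $T_0$ using periodicity of the extended geodesic. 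Both inputs are part of the standard structure theory of CROSSes cited from \cite{FS05}; the paper's route has the small bonus of exhibiting each chord directly as a multiply covered closed orbit (which your argument also recovers a posteriori, since $T\in T_0\N$ and $Fl^{R_\alpha}_T=\id$ force $c(T)=c(0)$), while yours is marginally more robust in that it needs only the injectivity-radius identity rather than simplicity of the geodesics.
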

\begin{proof}
Since CROSSes have the property that all geodesics are embedded circles of equal period \cite[Section 2.2]{FS05}, every Reeb chord (starting and ending at $\p L$) is given by a multiple cover of a periodic Reeb orbits. Indeed, let $x:[0,T]\to \p W$ be a Reeb chord. We denote by $\pi:ST^*B\to B$ the canonical projection. Since a geodesic $\gamma(t):=\pi(x(t))$ is closed, i.e., $\gamma(0)=\gamma(T)$, we obtain $\gamma'(0)=\gamma'(T)$. This implies that $x(0)=x(T)$. Hence, we know that $\Spec(\p W,\alpha,\p L)=T_0\cdot \N$ where $T_0$ is the minimal period of the Reeb flow. Since $Fl_{T_0}^{R_\alpha}=\id$, the conclusion follows immediately.
\end{proof}

With this lemma, we re-prove the results in \cite[Theorem 1]{FS05} using Theorem \ref{ThmA} and \ref{ThmB}. 

\begin{Thm} \label{cot over CROSS}
Let $B$ be an $n$-dimensional CROSS and let $\tau: T^* B \rightarrow T^* B$ be a twist. 
\begin{itemize}
	\item If $\phi \in \symp^c(T^* B)$ is such that $[\phi]=[\tau^k] \in \pi_0(\symp^c(T^* B))$ for some $k \in \mathbb{Z} \setminus \{0\}$, then we have $s_n(\phi) \ge 1$.
	\item a class $[\tau]$ has an infinite order in $\pi_0(\symp^c(T^*B))$.
\end{itemize}
\end{Thm}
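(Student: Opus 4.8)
The plan is to deduce Theorem \ref{cot over CROSS} from Theorem \ref{ThmA} and Theorem \ref{ThmB}, applied to the disk cotangent bundle $W=\{(q,p)\in T^*B:|p|_g\le 1\}$ (so $\widehat W=T^*B$ and $\symp^c(\widehat W)=\symp^c(T^*B)$) and the disk cotangent fiber $L=DT^*_xB$ at an arbitrary $x\in B$. We have already recorded that $(W,\lambda_{\mathrm{can}})$ is a Liouville domain whose boundary $ST^*B$ carries a periodic Reeb flow (the geodesic flow, with minimal common period normalized to $1$), and that the Reeb chords for $\partial L=ST^*_xB$ are of Morse--Bott type, with the set of periods of contractible chords an arithmetic progression and each Morse--Bott component a copy of $ST^*_xB\cong S^{n-1}$. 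It thus remains to verify the outstanding hypotheses of the two theorems and invoke them. Throughout we assume $n\ge 2$; the case $n=1$ ($B=S^1$) is classical.

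First I would dispatch the two easy hypotheses. The topological condition holds since, by Poincaré--Lefschetz duality and the homotopy equivalence $W\simeq B$, $H^1_c(W;\R)\cong H_{2n-1}(B;\R)=0$ because $\dim B=n<2n-1$. And $L=DT^*_xB$ is an admissible Lagrangian ball: it is diffeomorphic to $D^n$; $\lambda_{\mathrm{can}}|_L=(p\,dq)|_{\{q=x\}}\equiv 0$ is exact; $\partial L=ST^*_xB$ is Legendrian; and the Liouville field $p\,\partial_p$ is tangent to every cotangent fiber, hence to $L$.

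The substance of the proof is the linear growth of $\WFH_*(L;W)$, for which I would run the Morse--Bott spectral sequence of Theorem \ref{thm: ss}. With the data above, $E^1_{0q}=H_{q+n}(D^n,S^{n-1};\Z_2)$ has total rank $1$, while for $p\ge 1$
\[
E^1_{pq}=H_{\,p+q-\shift(\mathcal L_{T_p})+\frac n2}(S^{n-1};\Z_2)
\]
has total rank $2$, occupying the two total degrees $\shift(\mathcal L_{T_p})-\tfrac n2$ and $\shift(\mathcal L_{T_p})+\tfrac n2-1$. So each column $p\ge 1$ contributes exactly $2$, the truncated spectral sequence for $\WFH^{<c}_*(L;W)$ retains the columns with $T_p<c$, and in particular $\dim\WFH^{<c}_*(L;W)$ is bounded above linearly in $c$. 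For the matching lower bound I would compute $\shift(\mathcal L_{T_p})=\mu(\mathcal L_{T_p})-\tfrac12(n-2)$ using that the chords in $\mathcal L_{T_p}$ are iterates of the prime closed geodesic through $x$: by the Morse index theorem (equivalently the index iteration formula for iterated closed geodesics on a CROSS), $\shift(\mathcal L_{T_{p+1}})-\shift(\mathcal L_{T_p})$ is a fixed integer $\ge 2$ --- equal to $2(n-1)$ for $B=S^n$. Hence consecutive columns occupy disjoint total-degree windows separated by a gap; since each differential $d^r\colon E^r_{pq}\to E^r_{p-r,q+r-1}$ lowers the total degree by exactly $1$ while lowering the column by $r\ge 1$, for $n\ge 3$ no differential can be nonzero, the spectral sequence degenerates at $E^1$, and $\dim\WFH^{<c}_*(L;W)$ grows exactly linearly in $c$. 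For $n=2$ one either inspects the finitely many candidate first differentials directly, or cross-checks against the classical computation $\WFH_*(T^*_xB;T^*B)\cong H_*(\Omega_xB;\Z_2)$, whose Betti numbers are bounded with one new class per unit of length. In all cases $\liminf_{c\to\infty}\dim\WFH^{<c}_*(L;W)/c>0$, so $\WFH_*(L;W)$ has a linear growth (and is, in particular, infinite-dimensional).

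With all hypotheses verified, the first assertion of Theorem \ref{cot over CROSS} follows from Theorem \ref{ThmA} and the second from Theorem \ref{ThmB}. I expect the main obstacle to be the Maslov-index bookkeeping in the growth step: each iterated closed geodesic on a CROSS has a conjugate point of multiplicity $n-1$ exactly at its endpoint, so the Lagrangian loop whose Robbin--Salamon index gives $\mu(\mathcal L_{T_p})$ is degenerate there, and one must track the half-integer boundary terms of $\mu_{RS}$ rather than merely counting interior conjugate points. This is also what makes the low-dimensional cases delicate --- for $n=2$ one additionally wants the $\Z$-grading of $\WFH_*$ to be well defined despite $H_1(S^{n-1})\ne 0$ --- and the Wadsley normalization is precisely what keeps the index windows of all columns in lockstep so that the degeneration argument closes.
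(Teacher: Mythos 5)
Your reduction to Theorems \ref{ThmA} and \ref{ThmB} and your verification of the soft hypotheses ($H^1_c(W;\R)=0$ for $n\ge 2$, admissibility of the disk fiber, Morse--Bott structure of the chords with each component a copy of $S^{n-1}$) are fine, but the degeneration step contains a genuine gap. From an increment $\Delta:=\shift(\mathcal L_{T_{p+1}})-\shift(\mathcal L_{T_p})\ge 2$ you infer that consecutive columns occupy disjoint total-degree windows; this is not valid, since column $p$ occupies the two total degrees $\mu(\mathcal L_{T_p})-n+1$ and $\mu(\mathcal L_{T_p})$, a window of width $n-1$, so disjointness (let alone a gap) requires $\Delta>n$. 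That threshold does hold for $S^n$ and for the contractible chords on $\R P^n$ (where $\Delta=2(n-1)$), and for $\mathbb{H}P^m$ and $CaP^2$, but it fails for $B=\C P^m$: there the increment is exactly $n$, adjacent columns abut in degree, and there are candidate differentials $d^1$ from the bottom generator of column $p+1$ (total degree $pn+1$) to the top generator of column $p$ (total degree $pn$), as well as from the bottom generator of column $1$ (degree $1$) into the $p=0$ column (degree $0$). None of these is excluded by degree reasons, so the claim ``for $n\ge 3$ no differential can be nonzero'' is unjustified for all CROSSes; ruling them out requires extra input (e.g.\ comparison with $H_*(\Omega_x\C P^m;\Z_2)$), at which point you are no longer arguing from the spectral sequence alone. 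In addition, for $n=2$ the hypotheses of Theorem \ref{thm: ss} fail ($H_1(\mathcal L)\ne 0$), and your fallback --- $\WFH_*(T^*_xB;T^*B)\cong H_*(\Omega_xB;\Z_2)$ with ``one new class per unit of length'' --- is asserted rather than proved; proving that $H_*(\Omega_xB;\Z_2)$ is large is precisely the nontrivial content one must supply, and your claimed index increments for the non-spherical CROSSes are likewise stated without computation.

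For comparison, the paper's proof avoids all index computations for CROSSes: it invokes the Abbondandolo--Portaluri--Schwarz isomorphism $\WFH_*(L;W)\cong H_*(\Omega_xB;\Z_2)$, proves infinite dimensionality of $H_*(\Omega_xB)$ by the Leray--Serre spectral sequence of the path--loop fibration when $\pi_1(B)=0$ and by a direct Morse--Bott argument for $\R P^n$, and then uses the Morse--Bott spectral sequence only through the fact that each column has total rank at most $2$ and the columns recur periodically in the action, which converts infinite dimensionality into the linear growth \eqref{eq: growth}; Theorems \ref{ThmA} and \ref{ThmB} then finish the proof. If you wish to keep your route, restrict the gap argument to the cases $\Delta>n$ and treat $\C P^m$ (and $n=2$) by the loop-space comparison, i.e.\ by the argument the paper actually gives.
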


\begin{proof}
Since $H^1_c(W;\R) \cong H_{2n-1}(T^* B;\R) \cong H_{2n-1}(B;\R)$, the condition $H^1_c(W;\R)=0$ holds except for $B=S^1$. The case for $B = S^1$ can be shown by the topological argument given in \cite[Section 2.1]{FS05}, so we now consider $B\ne S^1$. Choose a point $x \in B$ and a Lagrangian $L=W \cap T^*_x B$, i.e., a disk cotangent fiber at $x$. Clearly, the Lagrangian $L$ is admissible and is diffeomorphic to the ball. 

We claim that the wrapped Floer homology $\WFH_*(L; W)$ is infinite dimensional. From now on, the coefficient group of singular homology is understood as $\Z_2$. By Abbondandolo-Portaluri-Schwarz \cite{APS}, there exists an isomorphism
$$
\WFH_*(L; W) \cong H_*(\Omega_x B),
$$
where $\Omega_x B$ denotes the based loop space of contractible loops in $B$ with a base point $x$. It thus suffices to show that the homology $H_*(\Omega_x B)$ is infinite dimensional.\\\\
{\it Case 1.} $\pi_1(B)=0$.

Suppose on the contrary that the homology $H_*(\Omega_x B)$ is finite dimensional. Consider the fibration
$$
\Omega_x B \rightarrow \mathcal{P}_x B \xrightarrow{ev} B$$
where $\mathcal{P}_x B$ is the space of paths starting at $x$ and $ev$ is the evaluation map given by the end point of a path. By Leray-Serre theorem for this fibration, there exists a spectral sequence $E_{pq}^r$ converging to $H_*(\mathcal{P}_xB)$ whose $E^2$-page is given by
$$
E_{pq}^2=H_p(B) \otimes H_q(\Omega_x B).
$$
In particular, we find that $E^2_{p_0q_0} \cong \Z_2$ if $p_0 = \dim B$ and $q_0$ is the largest degree such that $H_*(\Omega_x B)$ does not vanish. Since  $H_{p_0 + q_0}(\mathcal{P}_x B)\cong E^2_{p_0q_0} \cong \Z_2$, this contradicts to the fact that the path space $\mathcal{P}_x B$ is contractible. Therefore, $H_*(\Omega_x B)$ is infinite dimensional.\\\\
{\it Case 2.} $\pi_1(B)\ne 0$.

It suffices to consider $B=\mathbb{R}P^n$. In this case, we directly consider the Morse-Bott homology $H_*(\Omega_x B)$, using the energy functional
$$\mathcal{E}: \Omega_x B \longrightarrow \mathbb{R}, \quad \mathcal{E}(\gamma)=\int_{S^1} {|\dot{\gamma}(t)|^2} dt.
$$
Note that the Morse-Bott indices of periodic geodesics in $\mathbb{R}P^n$ are all zero \cite[Proposition 2.1]{FS05}, and by the periodicity of the geodesic flow, we should have that $H_*(\Omega_x B)$ is infinite dimensional.

Now, we have seen that $\WFH_*(L; W)$ is infinite dimensional. It follows from the spectral sequence \eqref{eq: MBss} that the dimension of the filtered homology $\dim \WFH_*^{< c}(L, W)$ increases linearly along $c$, i.e., the limit \eqref{eq: growth} is positive. By Theorem \ref{ThmA} and \ref{ThmB}, we complete the proof.\end{proof}



\section{Milnor fibers of $A_k$-type singularities} \label{sec: A_k}
\subsection{Definition}
Let $f: \C^{n+1} \rightarrow \C$ be a polynomial with an isolated critical point at the origin. Consider its affine variety $f^{-1}(0)$. We define 
$$
W := \{z \in \C^{n+1}\;|\; f(z) = \epsilon \cdot \zeta(|z|)\} \cap B^{2n+2}
$$
for sufficiently small $\epsilon > 0$, where $\zeta \in C^{\infty}(\R)$ is a monotone increasing cut-off function such that $\zeta(x) = 1$ for $x \leq 1/4$ and $\zeta(x) = 0$ for $x \geq 3/4$. The domain $W$ is called a \textit{Milnor fiber}. The restriction of the standard Liouville form of $\C^{n+1}$ to $W$, denoted by $
\lda : = \lda_{\text{std}}|_W$, defines a Stein structure and hence a Liouville domain $(W, \lda)$. Its contact type boundary is called a \textit{link} of a singularity $f$.

In particular, a polynomial $f: \C^{n+1} \rightarrow \C$ is called \textit{$A_k$-type} for $k\in \N$ if it has the form
$$
f(z) = z_0^{k+1} + z_1^{2} + \cdots + z_n^2.
$$
For an $A_k$-type singularity $f$, we denote the corresponding Liouville domain and its boundary by $W_k$ and $\Sigma_ k$ respectively. The boundary $\Sigma_k$ is also called a \emph{Brieskorn manifold}.

\subsection{Periodic Reeb flow}
The $A_k$-type polynomials are weighted homogeneous of weights $(k+1, 2, \dots, 2)$. More precisely, 
$$
f(\eta^{1/(k+1)} z_0, \eta^{1/2} z_1, \dots, \eta^{1/2} z_n) = \eta \cdot f(z)
$$
for all $\eta \in \C^*$. In general, links of weighted homogeneous polynomials admit a contact form whose Reeb flow is \emph{periodic}. For $A_k$-type, we in particular take the following contact form.
$$
\alpha : = \left.\frac{i}{4}\left( (k+1)(z_0 d\overline z_0 - \overline z_0 d z_0)  + \sum_{j=1}^n 2 (z_jd\overline z_j - \overline z_j d z_j) \right)  \right|_{\Sigma_{k}}.
$$  
The contact structure $\ker \alpha$ on $\Sigma_{k}$ is contactomorphic to the contact structure $\ker \lda|_{\Sigma_{k}}$, see \cite[Section 2]{KvK} for more details.

Its Reeb flow is given by
\begin{equation}\label{eq: Reeb A_k}
Fl_t^{R_{\alpha}}(z) = (e^{it/(k+1)}z_0, e^{it/2}z_1, \dots, e^{it/2}z_n).
\end{equation}
This is indeed periodic since $Fl_T^{R_{\alpha}}  =  \id$ for $T =  2 \pi \cdot \lcm(k+1, 2)$. We therefore have a well-defined fibered twist $\tau : \widehat W_k \rightarrow \widehat W_k$ following the recipe in Section \ref{Fibered twist}.

\subsection{Real Lagrangians}

We take a specific Lagrangian in Milnor fibers as a real Lagrangian. Consider the following map 
\begin{equation}\label{eq: conjmap}
\rho: W_{k} \rightarrow W_{k}, \quad z \mapsto (\overline z_0, -\overline z_1, \cdots, -\overline z_{n-1}, - \overline z_n),
\end{equation}
and denote its fixed point set by $ \Fix(\rho)$. Note that $\rho$ is an exact anti-symplectic involution so that the fixed point set $\Fix(\rho)$ forms (possibly disconnected) Lagrangian in $W_{k}$ (unless it is empty). It also follows that the intersection $\Fix(\rho) \cap \Sigma_{k}$ forms a Legendrian in $\Sigma_{k}$.

\begin{Rem}
One reason why we put the artificial minus signs in (\ref{eq: conjmap}) is chosen so that $\Fix(\rho)$ is not empty. For example, for $k=2$, the fixed point set of the conjugate map $z \mapsto \overline z$ restricted to $\widehat W_{2}$ is empty. 
\end{Rem}

The anti-symplectic involution (\ref{eq: conjmap}) restricts to an anti-contact involution on the boundary $\Sigma_{k}$. Its fixed point set is given by the intersection $\Fix(\rho) \cap \Sigma_{k}$. In other words,
$$
\Fix(\rho) \cap \Sigma_{k} = \{z \in \Sigma_{k} \;|\; y_0 =x_1= \cdots =x_n = 0\}
$$
where $z_j = x_j + i y_j$. We investigate topology of the real Lagrangians as follows.

\begin{Lem}\label{lem: topofLag}
The real Lagrangian $\Fix(\rho)$ is diffeomorphic to the ball $B^n$ if $k$ is even and is diffeomorphic to the disjoint union of the balls $B^n \sqcup B^n$ if $k$ is odd. 
\end{Lem}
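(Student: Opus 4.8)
The plan is to describe $\Fix(\rho)$ explicitly in real coordinates and realize it as the graph of a function over a round ball. On the fixed locus of the involution $\rho$ in (\ref{eq: conjmap}) one has $z_0 = x_0 \in \R$ and $z_j = i y_j$ with $y_j \in \R$ for $1 \le j \le n$; writing $y = (y_1,\dots,y_n)$ and $r = \sqrt{x_0^2 + |y|^2}$, this gives
\[
\Fix(\rho) = \bigl\{\, (x_0, y) \in \R^{n+1} \;\big|\; r \le 1,\ x_0^{k+1} - |y|^2 = \epsilon\, \zeta(r) \,\bigr\}.
\]
Being the fixed locus of the smooth involution $\rho$ on the manifold with boundary $W_k$, this is a smooth compact manifold with boundary $\Fix(\rho) \cap \Sigma_k$. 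The first thing I would record is that $x_0 \neq 0$ everywhere on $\Fix(\rho)$: at $x_0 = 0$ the defining equation reads $-|y|^2 = \epsilon\,\zeta(|y|)$, whose left-hand side is $\le 0$ while the right-hand side is $> 0$ unless $|y| = 0$ and $\zeta(0) = 0$, which is impossible since $\zeta(0) = 1$.

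Next I would analyze the slice $\Fix(\rho) \cap \{x_0 > 0\}$. For fixed $y$ set $G_y(x_0) := x_0^{k+1} - |y|^2 - \epsilon\,\zeta(\sqrt{x_0^2 + |y|^2})$; a direct computation gives $G_y'(x_0) = (k+1) x_0^{k} - \epsilon\,\zeta'(r)\, x_0 / r$, which is strictly positive for $x_0 > 0$ because $\zeta' \le 0$. Hence $G_y$ is strictly increasing on $(0,\infty)$, so at most one $x_0 > 0$ satisfies $(x_0, y) \in \Fix(\rho)$; moreover $\partial_{x_0} G \neq 0$ shows that the linear projection $(x_0, y) \mapsto y$ restricts to an injective immersion on $\Fix(\rho) \cap \{x_0 > 0\}$, hence, the domain being compact, to a diffeomorphism onto its image $U^+ \subset \R^n$. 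To identify $U^+$ I would track the endpoints of the range $x_0 \in (0, \sqrt{1-|y|^2}\,]$: one always has $G_y(0^+) = -|y|^2 - \epsilon\,\zeta(|y|) < 0$, while $G_y(\sqrt{1-|y|^2}) = (1-|y|^2)^{(k+1)/2} - |y|^2$ since $\zeta(1) = 0$. As the function $s \mapsto (1-s)^{(k+1)/2} - s$ decreases strictly from $1$ to $-1$ on $[0,1]$, it has a unique zero $s_0$, and the intermediate value theorem together with the monotonicity of $G_y$ yields $U^+ = \{\, |y|^2 \le s_0 \,\} = \overline{B^n_{\sqrt{s_0}}}$. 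Thus $\Fix(\rho) \cap \{x_0 > 0\}$ is the graph of a function over a round closed ball, hence diffeomorphic to $B^n$.

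It remains to combine the two cases. If $k$ is even then $k+1$ is odd, so $x_0^{k+1} = |y|^2 + \epsilon\,\zeta(r) \ge 0$ forces $x_0 \ge 0$; together with $x_0 \neq 0$ this gives $\Fix(\rho) = \Fix(\rho) \cap \{x_0 > 0\} \cong B^n$. If $k$ is odd then $k+1$ is even, so $x_0^{k+1} - |y|^2$ is even in $x_0$ and $r$ is unchanged under $x_0 \mapsto -x_0$; hence the reflection $(x_0,y) \mapsto (-x_0, y)$ preserves $\Fix(\rho)$, and since $\Fix(\rho)$ avoids $\{x_0 = 0\}$ it splits as $\Fix(\rho) = \bigl( \Fix(\rho) \cap \{x_0 > 0\} \bigr) \sqcup \bigl( \Fix(\rho) \cap \{x_0 < 0\} \bigr)$, the two pieces being interchanged by the reflection and each diffeomorphic to $B^n$ by the previous paragraph. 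Therefore $\Fix(\rho) \cong B^n \sqcup B^n$.

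The step I expect to be the main obstacle is pinning down $U^+$ as a genuine round ball rather than merely some contractible codimension-$0$ domain: this is where the interplay of the inner region $\{r \le 1/4\}$, the transition region, the conical region $\{r \ge 3/4\}$, and the truncating ball $\{r \le 1\}$ must be controlled, and where one must use that $\{x_0^{k+1} - |y|^2 = \epsilon\,\zeta(r)\}$ meets the sphere $\{r = 1\}$ transversally (part of the standard Milnor-fiber setup), so that $\Fix(\rho)$ is indeed a smooth manifold with boundary and the graphing function extends smoothly to the boundary. The other ingredients are elementary one-variable calculus.
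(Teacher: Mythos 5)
Your proposal is correct, and at the decisive step it takes a genuinely different route from the paper. Both arguments begin with the same coordinate description of $\Fix(\rho)$ and both reduce the odd-$k$ case to the half $\{x_0>0\}$ via the reflection $x_0\mapsto -x_0$; but the paper then argues Morse-theoretically, taking $h=x_0^2+|y|^2$ as a Morse function on the hypersurface $\{x_0^{k+1}-|y|^2=\epsilon\zeta\}$, showing each half carries exactly one critical point (a minimum), and concluding that the sublevel set $\{h\le 1\}$ is a ball. You instead use strict monotonicity of $G_y(x_0)=x_0^{k+1}-|y|^2-\epsilon\zeta(r)$ in $x_0>0$ (from $\zeta'\le 0$) to present each half as a graph over its $y$-projection, and you identify that projection, via $\zeta(0)=1$, $\zeta(1)=0$ and the intermediate value theorem, as the round ball $\{|y|^2\le s_0\}$. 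Your version produces an explicit diffeomorphism and makes the boundary behaviour transparent ($\mathcal{L}\cap\{x_0>0\}$ sits exactly over the sphere $\{|y|^2=s_0\}$), at the cost of tracking the cut-off in detail; the paper's Morse argument is shorter and insensitive to the precise shape of $\zeta$, but it leaves implicit both the uniqueness of its critical point (which likewise uses that $\zeta$ is non-increasing — the paper's ``monotone increasing'' is evidently a slip, given $\zeta(0)=1$ and $\zeta(1)=0$, and your reading is the intended one) and the fact that $1$ is a regular value of $h$ on the hypersurface, i.e. the same transversality with $\{r=1\}$ that you flag at the end. That transversality is a one-line check at points of $\mathcal{L}$ (there $\zeta\equiv 0$, so the gradients $((k+1)x_0^k,-2y)$ and $(x_0,y)$ are independent once $x_0\neq 0$), so flagging it rather than proving it is not a gap.
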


\begin{proof}
Using the coordinate $z_j=x_j+iy_j$, a point $z=(z_0, \cdots, z_n) \in W_{k}$ is in $L$ if and only if $y_0=x_1= \cdots =x_n=0$. The Lagrangian $L$ is given by
$$L=\{(x_0, iy_1, \cdots, iy_n) \in \mathbb{C}^{n+1} | \quad x_0^{k+1}-|y|^2=\epsilon \cdot \zeta(x_0^2+|y|^2),\quad x_0^2+|y|^2 \le 1\}$$
where $y=(y_1, \cdots, y_n)$. Moreover, the Legendrian submanifold $\mathcal{L}$ can be written as
\begin{equation}
\label{eq: realLeg}\mathcal{L}=\{(x_0, iy_1, \cdots, iy_n) \in \mathbb{C}^{n+1} | \quad x_0^{k+1}-|y|^2=0,\quad x_0^2+|y|^2 =1\}	
\end{equation}
and thus $\mathcal{L}$ is diffeomorphic to $S^{n-1}$ if $k$ is even and diffeomorphic to $S^{n-1} \sqcup S^{n-1}$ if $k$ is odd; in the latter case, each copy of $S^{n-1}$ corresponds to positive $x_0$ and negative $x_0$ respectively. 

We consider $h(x_0, y)=x_0^2+|y|^2$ as a Morse function on the set 
$$
\{(x_0, y) \in \R^{n+1} \;|\; d(x_0, y):=x_0^{k+1}-|y|^2-\epsilon \cdot \zeta(x_0^2+|y|^2)=0 \}.
$$ 
Observe that $L$ is then the 1-sublevel set $\{h \le 1\}$. The critical point of $h$ on the hypersurface is given by
\begin{align*}
(x_0, y) \in \crit(h) &\iff \nabla h(x_0, y) \textrm{ and } \nabla d(x_0, y) \textrm{ are linearly dependent.}\\
&\iff (x_0, y) \textrm{ and } ((k+1)x_0^k, -2y) \textrm{ are linearly dependent}.
\end{align*}

First, we consider the critical point when $k$ is even. If $y \ne 0$, we have $(k+1)x_0^k=-2x_0$, so $x_0=0$ or $({-2 \over k+1})^{1 \over k-1}$. Both cases are impossible on the hypersurface $\{d(x_0, y)=0\}$ since $(x_0, y)$ satisfies $x_0^{k+1}=|y|^2+\epsilon \cdot \zeta(x_0^2+|y|^2)>0$. If $y=0$, then $x_0^{k+1}=\epsilon \cdot \zeta(x_0^2)$ and this equation has the unique solution, say $x_0^*$. Therefore we have $\crit(h)=\{(x_0^*, 0)\}$. This implies that $L$ is diffeomorphic to a ball $B^n$ if $k$ is even.

Second, let us see when $k$ is odd. It is easy to see that $L$ consists of two connected component $L \cap \{x_0>0\}$ and $L \cap \{ x_0 < 0\}$. We can show that each connected component is diffeomorphic to $B^n$ following the above argument.
\end{proof}
From now on, we set a Lagrangian $L$ in $W_k$ to be a connected component of $\Fix(\rho)$, so $L$ is diffeomorphic to the ball $B^n$. We denote its boundary by $\mathcal{L}$, which is diffeomorphic to the sphere $S^{n-1}$.

\subsection{Uniform lower bounds} 
We want to prove the following uniform lower bounds.

\begin{Thm}\label{main thm for a_k}
Let $\tau : \widehat W_k \rightarrow \widehat W_k$ be the fibered twist corresponding to a $A_k$-type singularity. If a compactly supported symplectomorphism $\phi$ satisfies $[\phi] =  [\tau^l] \in \pi_0(\symp^c(\widehat W_k))$ for some $l \neq 0$, then $s_n(\phi) \geq 1$.
\end{Thm}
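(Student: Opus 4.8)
The plan is to obtain Theorem \ref{main thm for a_k} as a direct application of Theorem \ref{ThmA}, using the real Lagrangian $L$ --- a connected component of $\Fix(\rho)$ for the exact anti-symplectic involution \eqref{eq: conjmap} --- as the required admissible Lagrangian ball. From Section \ref{sec: A_k} we already have that $(W_k,\lambda)$ is a Liouville domain whose boundary $\Sigma_k$ carries the periodic Reeb flow \eqref{eq: Reeb A_k}, that $\rho$ is an exact anti-symplectic involution, and that $L$ is diffeomorphic to $B^n$ by Lemma \ref{lem: topofLag}; since $L=\Fix(\rho)$, it is admissible by the lemma in Section \ref{def:real}. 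Hence the remaining tasks are: (i) verify the cohomological hypothesis $H^1_c(W_k;\R)=0$ and the topological conditions \ref{ms1}--\ref{ms3}; (ii) show that the Reeb chords of $\mathcal{L}=\p L$ are of Morse-Bott type and compute $\WFH_*(L;W_k)$; and (iii) deduce from the computation that $\WFH_*(L;W_k)$ has a linear growth. Then Theorem \ref{ThmA} gives $s_n(\phi)\ge 1$ for every $\phi$ with $[\phi]=[\tau^l]$, $l\ne 0$.

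For (i), recall that $W_k$ is Stein deformation equivalent to the $k$-fold linear plumbing of $T^*S^n$'s, hence homotopy equivalent to a wedge of $k$ copies of $S^n$. For $n\ge 2$, Lefschetz duality gives $H^1_c(W_k;\R)\cong H_{2n-1}(W_k;\R)=0$, and $c_1(W_k)$ vanishes on $\pi_2(W_k)$. Since $L\cong B^n$ and $\mathcal{L}\cong S^{n-1}$ are simply connected (for $n$ large enough), conditions \ref{ms1}--\ref{ms2} hold, and \ref{ms3} follows from simple connectivity of $\Sigma_k$ and $W_k$; the low-dimensional cases can be treated by the direct topological arguments of \cite{FS05}.

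The substantive step is (ii)--(iii). Because the Reeb flow \eqref{eq: Reeb A_k} is periodic, each submanifold $\mathcal{N}_{2T}$ occurring in Definition \ref{def: mbreebchord} is either all of $\Sigma_k$ (when $2T$ is a common period) or a clean Brieskorn-type locus on which some coordinates vanish; applying Corollary \ref{cor: mbtype} and Proposition \ref{prop: mbcrit}, the Reeb chords of $\mathcal{L}$ are of Morse-Bott type, and \eqref{eq: mbc} identifies the Morse-Bott components as $\mathcal{L}_{T_p}=\mathcal{N}_{2T_p}\cap\mathcal{L}$. I would then run the Morse-Bott spectral sequence of Theorem \ref{thm: ss}. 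The one genuinely technical ingredient is the Maslov index $\mu(\mathcal{L}_{T_p})$: compute the Robbin--Salamon indices of the periodic Reeb orbit families of $\Sigma_k$ (the standard Brieskorn index computations, cf. \cite{KvK}), and convert them to the indices of the corresponding half Reeb chords via Proposition \ref{lem: indrel}, $\mu(\gamma)=2\mu(c)$, being careful about the short orbit families. Feeding the resulting $\shift(\mathcal{L}_{T_p})=\mu(\mathcal{L}_{T_p})-\tfrac12(\dim\mathcal{L}_{T_p}-1)$ into \eqref{eq: MBss} produces a first page that is eventually periodic with period the minimal common Reeb period: each ``layer'' contributes a fixed amount of $\Z_2$-homology coming from $H_*(\mathcal{L}_{T_p};\Z_2)$ with $\mathcal{L}_{T_p}\cong S^{n-1}$ (together with the $p=0$ term $H_*(L,\mathcal{L})$). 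Since the differentials $d^r$ in \eqref{eq: MBss} shift the $p$-grading by a bounded amount and all strata have finite-dimensional homology, enough classes survive that $\dim\WFH_*^{<c}(L;W_k)$ grows at least linearly in $c$; this is the content of Corollary \ref{cor: WFHcomp}, and it yields the linear growth required in Theorem \ref{ThmA}.

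The main obstacle is the index bookkeeping in step (ii): one must pin down the indices of \emph{all} Morse-Bott orbit families on $\Sigma_k$, including the short ones supported where $z_0=0$ or $z_1=\dots=z_n=0$, keep track of the periodicity pattern on the first page, and then argue that the higher differentials cannot kill the periodic contribution --- equivalently, that the net homology produced per period is strictly positive, so that the limit \eqref{eq: growth} is positive. Once this is established, Theorem \ref{ThmA} applies verbatim and completes the proof.
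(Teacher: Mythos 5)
Your overall route is the paper's route: take $L$ to be a connected component of $\Fix(\rho)$, which is an admissible Lagrangian ball by Lemma \ref{lem: topofLag}, check $H^1_c(W_k;\R)=0$ and the topological conditions, establish Morse--Bott type Reeb chords, compute indices via Proposition \ref{lem: indrel}, run the spectral sequence of Theorem \ref{thm: ss}, and feed the resulting linear growth into Theorem \ref{ThmA}. Two small remarks on step (ii): since every chord period $T$ of $\mathcal{L}$ has the property that $2T$ is a common period of the whole Reeb flow, one has $\mathcal{N}_{2T}=\Sigma_k$ and Corollary \ref{cor: mbtype} applies directly, so there is no clean-intersection issue to check and the ``short'' orbit loci (where $z_0=0$) never meet $\mathcal{L}$; in particular all Morse--Bott components are iterates $N\cdot\mathcal{L}\cong S^{n-1}$.

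The genuine gap is in step (iii). Your justification that ``the differentials $d^r$ shift the $p$-grading by a bounded amount and all strata have finite-dimensional homology, [so] enough classes survive'' is not a valid argument: $d^r$ maps $E^r_{pq}\to E^r_{p-r,q+r-1}$, so the shift in $p$ is $r$ and is unbounded, and more importantly an infinite $E^1$-page can be killed entirely by the differentials. The paper's own example $\WFH_*(\R^n;\C^n)=0$ is exactly such a case: the boundary is a prequantized sphere, the first page has infinitely many generators coming from the Morse--Bott components $N\cdot S^{n-1}$, and nothing survives. So survival of the periodic contribution cannot be a soft consequence of finiteness of the strata; it must come from the index data, which you defer rather than carry out. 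The paper closes this by computing $\mu(N\cdot\mathcal{L})=N\{2+(n-2)(k+1)\}$ (Lemma \ref{lem: indcpt}, via the Brieskorn orbit index formula and Proposition \ref{lem: indrel}) and then proving, for $n\ge 3$, the degree gap $q_{top}<q_{bot}$ between consecutive columns of \eqref{eq: MBss} (Lemma \ref{lem: ssterm}); this forces $E^1=E^\infty$, yields the explicit groups of Corollary \ref{cor: WFHcomp}, and, combined with the action filtration (the action of $N\cdot\mathcal{L}$ grows linearly in $N$), gives the positivity of the limit \eqref{eq: growth} needed in Theorem \ref{ThmA}. Without this explicit degeneration argument (or some substitute lower bound showing strictly positive net homology per period), your proposal does not establish the linear-growth hypothesis, which is the decisive point of the proof.
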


The strategy is to fit our situation into Theorem \ref{main thm} with the real Lagrangians, and we will show a linear growth of wrapped Floer homology groups using the Morse-Bott spectral sequence. It will turn out that we can compute the wrapped Floer homology $\WFH_*(W, L )$ completely, and the action filtration for the spectral sequence gives a linear growth as we want.

\begin{Rem}\
\begin{itemize}
\item It is worth pointing out that of power of the twist $\tau$ is smoothly isotopic to the identity if $n$ is even. This can be shown by considering the variation operator of $\tau$ as \cite[Proposition 2.23]{FS05} and references therein. Therefore, in such cases, we are looking at a ``symplectic phenomenon'' via the slow volume growth.

\item For $k=1$, the Milnor fiber $W_{1}$ is symplectomorphic to the disk cotangent bundle over $S^n$. In this case, our Lagrangian $L$ is exactly a cotangent fiber. In general, $W_k$ can be seen as a linear plumbing of $k-1$ cotangent bundles over $S^n$. In this sense, Theorem \ref{main thm for a_k} partially extends the result in \cite{FS05}. 
\end{itemize}
\end{Rem}

\subsubsection{Morse-Bott type Reeb chords and indices}
For $A_k$-type singularities, considering the Reeb flow (\ref{eq: Reeb A_k}) and the definition of $\mathcal{L}$ in the equation (\ref{eq: realLeg}), the minimal period of Reeb chords which start and end in $\mathcal{L}$ is given by
$$
T_k  = \begin{cases}  (k+1) \pi & \text{$k$ is even,} \\   2(k+1) \pi & \text{$k$ is odd.} \end{cases}
$$ 
Note that the period $T_k$ is realized as the half Reeb chords of the periodic Reeb orbits with the minimal common period for $k$ is even, and the periodic Reeb orbit itself for $k$ is odd. By Corollary \ref{cor: mbtype}, Reeb chords are of Morse-Bott type, and Morse-Bott components for $A_k$-type singularities are precisely given by $\mathcal{L} : = \mathcal{L}_{T_k}$ and its iterates, denoted by $N \cdot \mathcal{L}$. 

In general, the Maslov indices of periodic Reeb orbits with the minimal common period for polynomials of the form $z_0^{a_0} + \cdots + z_n^{a_n}$ is given by the following formula 
$$
\mu(\gamma) = 2 (\lcm_{j}a_j)\left(\sum_j \frac{1}{a_j}-1 \right),
$$
see \cite[Proposition 5.9]{KvK}. By Proposition \ref{lem: indrel}, the Maslov index of the corresponding half Reeb chord is then
\begin{equation}\label{eq: indform}
\mu(x) = (\lcm_{j}a_j)\left(\sum_j \frac{1}{a_j}-1 \right).
\end{equation}
Therefore we find the following.
\begin{Lem} \label{lem: indcpt}
The Maslov indices of components are given by
$$
\mu(N \cdot \mathcal{L}) = N \cdot \{2 + (n-2)(k+1)\}. 
$$
\end{Lem}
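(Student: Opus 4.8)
The plan is to read off the case $N=1$ from the index formula for Brieskorn manifolds, and then to propagate it to all iterates by an additivity argument for the Robbin--Salamon index.

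First I would compute $\mu(\mathcal{L})=\mu(\mathcal{L}_{T_k})$. The $A_k$-polynomial has the form $z_0^{a_0}+\cdots+z_n^{a_n}$ with $a_0=k+1$ and $a_1=\cdots=a_n=2$, so $\sum_j\frac1{a_j}=\frac1{k+1}+\frac n2$. If $k$ is even, the primitive chord is the half Reeb chord of the periodic Reeb orbit of the minimal common period, so \eqref{eq: indform} applies directly and, since $\lcm(k+1,2)=2(k+1)$,
$$
\mu(\mathcal{L})=2(k+1)\Bigl(\frac1{k+1}+\frac n2-1\Bigr)=2+(n-2)(k+1).
$$
If $k$ is odd, the primitive chord joining a connected component of $\Fix(\rho)$ to itself is itself a periodic Reeb orbit $\gamma$ of the minimal common period, and since $Fl_{T_k}^{R_\alpha}=\id$ in this case I would use Lemma \ref{lem: hor} to identify its chord Maslov index $\mu_{RS}(\Psi_c\Lambda_\text{hor}^{n-1},\Lambda_\text{hor}^{n-1})$ with the orbit index $\mu_{RS}(\text{Gr}(\Psi_c),\Delta)=\mu(\gamma)$; then \cite[Proposition 5.9]{KvK}, now with $\lcm(k+1,2)=k+1$, again yields $\mu(\gamma)=2(k+1)\bigl(\frac1{k+1}+\frac n2-1\bigr)=2+(n-2)(k+1)$. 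The change of $\lcm(k+1,2)$ by a factor two with the parity of $k$ is compensated by the passage from ``half chord'' to ``full orbit'', so both cases land on the same value.

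Second, to pass from $\mathcal{L}$ to $N\cdot\mathcal{L}=\mathcal{L}_{NT_k}$ I would establish $\mu(N\cdot\mathcal{L})=N\,\mu(\mathcal{L})$. Writing $P$ for the minimal period with $Fl_P^{R_\alpha}=\id$, in a symmetric/adapted trivialisation the linearised Reeb flow along the chord representing $N\cdot\mathcal{L}$ is the $N$-fold concatenation of the linearised flow along the primitive chord; this is precisely where the periodicity of the Reeb flow is used. By the catenation property of the Robbin--Salamon index \cite[Theorem 2.3]{RS}, $\mu(N\cdot\mathcal{L})$ is the sum of the $N$ primitive contributions plus the crossing corrections at the $N-1$ interior junctions, and since the linearised generator is periodic the crossing forms at the two ends of the primitive path agree; hence the half-weight endpoint crossing at the end of one copy together with the half-weight endpoint crossing at the start of the next assemble into exactly the full-weight crossing carried by an interior junction. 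Tracking these half-integer terms, they telescope and give $\mu(N\cdot\mathcal{L})=N\{2+(n-2)(k+1)\}$. (Alternatively, for $k$ even the $N$-fold chord is the half Reeb chord of the $N$-th iterate $\gamma^N$ of the primitive symmetric orbit, and one may combine Proposition \ref{lem: indrel} with $\mu(\gamma^N)=N\mu(\gamma)$; similarly for $k$ odd through Lemma \ref{lem: hor}.)

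The step I expect to be the main obstacle is this iteration: in general the Conley--Zehnder / Robbin--Salamon index of an $N$-fold iterate is \emph{not} $N$ times the primitive one, because of the mean-index defect. What makes the computation work here is the periodicity of the Reeb flow, together with the cleanness of the intersections $\mathcal{L}_{NT_k}$, which force the half-weight endpoint crossings to fit together with no residual term; making this cancellation rigorous, uniformly in $N$, is the only genuinely delicate point, whereas the first step is a direct substitution into formulas already in hand.
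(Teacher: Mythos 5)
Your proposal is correct and follows essentially the paper's (very terse) derivation: the $N=1$ value is exactly the substitution of $(a_0,\dots,a_n)=(k+1,2,\dots,2)$ into the formula of \cite[Proposition 5.9]{KvK} combined with Proposition \ref{lem: indrel} for $k$ even (resp.\ Lemma \ref{lem: hor} for $k$ odd, where the minimal chord is itself the principal orbit), and the linearity in $N$ is the iteration step the paper leaves implicit. Note only that the Robbin--Salamon catenation property is plain additivity, so there are no junction ``crossing corrections'' to track; your parenthetical route --- $\mu(\gamma^N)=N\mu(\gamma)$ for the periodic flow, then halving via Proposition \ref{lem: indrel} (whose proof only needs $Fl^{R_\alpha}_{NT}=\id$) --- is the cleaner version of this and is what the paper intends.
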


\begin{Ex}
Consider the $A_k$-singularity with $k=2$ and $n=3$. Then the situation is as follows.
\begin{itemize}
\item Morse-Bott components are given by $\mathcal{L}$ of period $6 \pi$ and its iterates $N \cdot \mathcal{L}$ of period $6N\pi$;
\item Topologically, $N \cdot \mathcal{L} \cong S^2$ for all $N \geq 1$, by Lemma \ref{lem: topofLag};
\item $\mu(N \cdot \mathcal{L}) = 5N.$
\end{itemize}
\end{Ex}

\begin{Rem}
For $A_k$-type singularities with $k \geq 2$ and $n \geq 3$, it is well known that $\pi_1(\Sigma) =0$ and $H^2(\Sigma) = 0$. Since $\mathcal{L} \cong S^{n-1}$, it also follows that $\pi_1(\Sigma, \mathcal{L}) = 0$, and the first Chern class $c_1(\xi)$ vanishes. So topological conditions for well-definedness of Maslov indices are all satisfied, see Remark \ref{rem: topcondmasReeb}. It is also known that $\pi_1(W_k,L)=0$ and $c_1(W_k)$ vanishes, so Theorem \ref{thm: ss} applies.
\end{Rem}

\subsubsection{Computations of the $E^1$-page}

Now, we have all necessary ingredients for computing the $E^1$-page of a spectral sequence in \eqref{eq: MBss}; indices and homology groups of Morse-Bott components. It turns out that the spectral sequences terminate from the $E^1$-page, that is, $d^r \equiv 0$ for $r \geq 1$:

\begin{Lem}\label{lem: ssterm}
Let $n \geq 3$ as we assumed. The spectral sequences (\ref{eq: MBss}) terminate from the $E^1$-page.
\end{Lem}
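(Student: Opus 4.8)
The plan is to show that the differentials $d^r \colon E^r_{pq} \to E^r_{p-r,q+r-1}$ all vanish for $r \geq 1$ by a degree-counting argument: I want to check that for fixed $p+q$, the $E^1$-page is concentrated in a single value of $p$ (or, more precisely, that no two nonzero entries $E^1_{pq}$ and $E^1_{p',q'}$ with $p - p' \geq 1$ and $q' - q = (p - p') - 1$ can simultaneously be nonzero). First I would unwind the formula in Theorem \ref{thm: ss}: for $p = N \geq 1$ the entry $E^1_{pq}$ is $H_{p+q - \shift(N\cdot\mathcal{L}) + \frac{1}{2}\dim L}(N\cdot\mathcal{L};\Z_2)$, where $\dim L = n$, $\dim(N\cdot\mathcal{L}) = n-1$, and by Lemma \ref{lem: indcpt} we have $\mu(N\cdot\mathcal{L}) = N(2 + (n-2)(k+1))$, so
$$
\shift(N\cdot\mathcal{L}) = N(2 + (n-2)(k+1)) - \tfrac{1}{2}(n-2).
$$
Since $N\cdot\mathcal{L} \cong S^{n-1}$ (Lemma \ref{lem: topofLag}), its $\Z_2$-homology is nonzero only in degrees $0$ and $n-1$. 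Thus for each $p = N$, the nonzero entries $E^1_{N,q}$ occur exactly at the two values of $q$ making $p + q - \shift(N\cdot\mathcal{L}) + \frac{n}{2} \in \{0, n-1\}$; and for $p = 0$, the entries live in degrees $0$ through $n$ of $H_*(L,\mathcal{L}) = H_*(B^n, S^{n-1})$, which is $\Z_2$ in degree $n$ only.

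The key step is then to compare the total degrees $p + q$ at which the bands for consecutive $p = N$ and $p = N+1$ (and $p = 0$) sit, and observe they are separated by enough to prevent any differential. Concretely, for $p = N$ the two bands of nonzero entries lie at total degree $p + q \in \{\,\shift(N\cdot\mathcal{L}) - \tfrac{n}{2},\ \shift(N\cdot\mathcal{L}) - \tfrac{n}{2} + n - 1\,\}$. A differential $d^r$ starting from an entry at $(p,q)$ lands at $(p - r, q + r - 1)$, which has total degree $p + q - 1$. So I need: whenever $E^1_{N,q} \neq 0$, the total degree $N + q - 1$ is not the total degree of any nonzero $E^1_{N-r, q'}$ for $1 \leq r \leq N$. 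This reduces to an arithmetic comparison of the numbers $\shift(N\cdot\mathcal{L}) - \frac{n}{2}$ and $\shift((N-r)\cdot\mathcal{L}) - \frac{n}{2} + \{0, n-1\}$, using that $\shift(N\cdot\mathcal{L})$ grows linearly in $N$ with slope $2 + (n-2)(k+1)$, which for $n \geq 3$ and $k \geq 1$ is at least $n + 1$. Since the width of each two-band cluster is $n - 1$ and the slope of $\shift$ exceeds $n$, consecutive clusters are disjoint in total degree and in fact separated by a gap, so a degree-dropping-by-one differential cannot connect them; I would also separately check the $p = 0$ column against the $p = 1$ cluster using the same estimate.

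The main obstacle I anticipate is the bookkeeping at small $N$ and the boundary column $p = 0$: the linear-growth estimate on $\shift$ is comfortable asymptotically, but one must verify the gap inequality holds already for $N = 1$ versus $N = 0$, and handle the case $n = 3$ where the sphere homology degrees $0$ and $n - 1 = 2$ are closest together, so the clusters are widest relative to the slope. I would dispose of this by writing out the inequality
$$
\shift((N+1)\cdot\mathcal{L}) - \tfrac{n}{2} > \bigl(\shift(N\cdot\mathcal{L}) - \tfrac{n}{2} + (n-1)\bigr) + 1,
$$
i.e. $2 + (n-2)(k+1) > n$, which holds for all $k \geq 1$, $n \geq 3$ (indeed $2 + (n-2)\cdot 2 = 2n - 2 \geq n + 1 > n$), and then noting that the differential only decreases total degree by exactly $1$ while distinct clusters are separated by at least $2$ in total degree. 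A secondary point to be careful about is that $\shift(N\cdot\mathcal{L}) - \frac{n}{2}$ need only be an integer after the $+\tfrac12\dim L$ correction combines with the half-integers; I would remark that the grading conventions in Remark \ref{rem: mbss} guarantee all the relevant homological degrees are integers, so the comparisons are between integers and the strict gap of $\geq 2$ is meaningful.
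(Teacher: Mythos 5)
Your proposal is correct and takes essentially the same route as the paper: both are lacunarity arguments that use Lemma \ref{lem: indcpt} and $\mathcal{L}\cong S^{n-1}$ to locate the nonzero entries of the $E^1$-page and then observe that consecutive clusters are too far apart for any $d^r$ to connect them. The paper tracks the $q$-coordinate (the differential never decreases $q$, so it suffices to check that the top generator of the $p=1$ column lies strictly below the bottom generator of the $p=2$ column, the other columns following by linearity of the shift), while you track the total degree $p+q$ (which the differential drops by exactly $1$), and both reduce to the same inequality $2+(n-2)(k+1)>n$ valid for $n\ge 3$, $k\ge 1$.
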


\begin{proof}
Denote the $q$-coordinate of the top generator of $H(\mathcal{L})$ by $q_{top}$ and the $q$-coordinate of the bottom generator of $H(2 \cdot \mathcal{L})$ by $q_{bot}$. It is enough to show that
$$
q_{top} < q_{bot}.
$$
Note that we can explicitly compute both of $q_{top}$ and $q_{bot}$, using Lemma \ref{lem: indcpt}. One finds
$$
q_{top} = (n-2)(k+1) + 1, \quad q_{bot} = 2(n-2)(k+1)-n+3.
$$
Therefore, under our assumption that $n \geq 3$, we always have $q_{top} < q_{bot}$.
\end{proof}

\begin{figure}
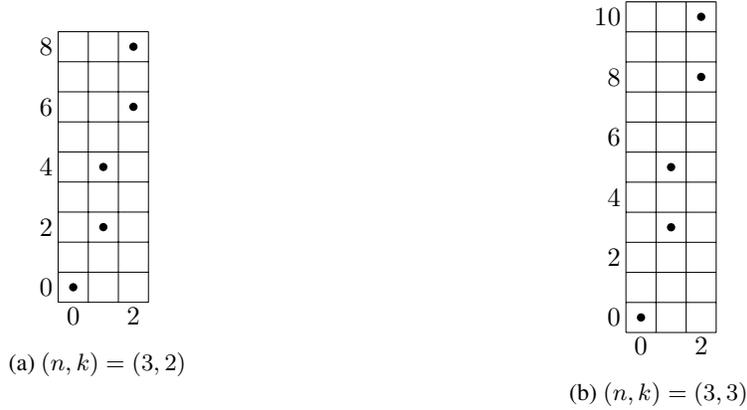

\centering

\begin{subfigure}{.5\textwidth}
  \centering
  \begin{sseq}{0...2}
{0...8}
\ssmoveto 0 0
\ssdropbull

\ssmoveto 1 2
\ssdropbull
\ssmove 0 2
\ssdropbull

\ssmoveto 2 6
\ssdropbull
\ssmove 0 2
\ssdropbull

\end{sseq}
  
    \caption{$(n,k)=(3,2)$}
  \label{fig:n=3}
\end{subfigure}%
\begin{subfigure}{.5\textwidth}
  \centering
 
 \begin{sseq}{0...2}
{0...10}
\ssmoveto 0 0
\ssdropbull

\ssmoveto 1 3
\ssdropbull
\ssmove 0 2
\ssdropbull

\ssmoveto 2 8
\ssdropbull
\ssmove 0 2
\ssdropbull

\end{sseq}

  \caption{$(n,k)=(3,3)$}
  \label{fig:n=4}
\end{subfigure}
\caption{$E^1$-pages of Morse-Bott spectral sequence}
\label{fig: ss_3222}
\end{figure}

\begin{Ex}
In Figure \ref{fig: ss_3222}, the left is the $E^1$-page for the case when $(n, k) = (3, 2)$. Each dot denotes a generator. The dot on the column $p=0$ comes from the relative homology $H_*(L, \mathcal{L})$ and the column $p=1$ comes from $H_*(L)$ and the column $p=2$ comes from $H_*(2 \cdot L)$, and so on. Observe that the differential $d^1$ of $E^1$-page vanishes identically just by the index reason; the spectral sequence terminates from the $E^1$-page. Therefore we get $\WFH_*(L;W_{2} )$ explicitly as follows.
$$
\WFH_*(L;W_{2}) = \begin{cases} \Z_2 & * = 0, 5l-2, 5l; l\in \N \\ 0 & \text{otherwise.} \end{cases}
$$
\end{Ex}

As in the above example, we get explicit formula of wrapped Floer homology groups of $A_k$-varieties with the real Lagrangians $\WFH_*(W_{k}, L)$: 

\begin{Cor}\label{cor: WFHcomp}
Let $k \geq 1$ and $n \geq 3$. The wrapped Floer homology group of $L$ in $W_k$ is given by
$$
\WFH_*(L;W_{k}) = \begin{cases} \Z_2 & *=0, \{(n-2)(k+1) +2\}l -n +1, \{(n-2)(k+1) +2\}l; l\in \N ; \\ 0 & \text{otherwise}.  \end{cases}
$$

\end{Cor}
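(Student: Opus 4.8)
The plan is to compute $\WFH_*(L;W_k)$ directly from the Morse-Bott spectral sequence of Theorem~\ref{thm: ss}, all of whose input data has been assembled above. By Corollary~\ref{cor: mbtype} the Reeb chords on $\Sigma_k$ are of Morse-Bott type, and for $n\ge 3$ (and $k\ge 1$) the topological conditions \ref{ms1}, \ref{ms2}, \ref{ms3} hold, so there is a spectral sequence $(E^r_{pq},d^r)$ converging to $\WFH_*(L;W_k)$ whose first page is given by \eqref{eq: MBss}. The relevant inputs are: the Morse-Bott components, which by Lemma~\ref{lem: topofLag} are $\mathcal{L}_{T_p}=p\cdot\mathcal{L}\cong S^{n-1}$ for $p\ge 1$; their Maslov indices $\mu(p\cdot\mathcal{L})=p\{2+(n-2)(k+1)\}$ from Lemma~\ref{lem: indcpt}; and the dimensions $\dim L=n$, $\dim\mathcal{L}_{T_p}=n-1$.

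First I would assemble the $E^1$-page. The shift is $\shift(p\cdot\mathcal{L})=\mu(p\cdot\mathcal{L})-\frac12(\dim\mathcal{L}_{T_p}-1)=p\{2+(n-2)(k+1)\}-\frac{n-2}{2}$. Plugging into \eqref{eq: MBss}: the column $p=0$ is $E^1_{0q}=H_{q+n}(L,\mathcal{L};\Z_2)=H_{q+n}(B^n,S^{n-1};\Z_2)$, which is $\Z_2$ for $q=0$ and $0$ otherwise, hence contributes one generator in total degree $0$. For $p\ge 1$, $E^1_{pq}=H_{p+q-\shift(p\cdot\mathcal{L})+n/2}(S^{n-1};\Z_2)$, which is $\Z_2$ exactly when the subscript equals $0$ or $n-1$ and $0$ otherwise. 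Writing $P:=(n-2)(k+1)+2$ and solving the two equations $p+q-\shift(p\cdot\mathcal{L})+n/2\in\{0,n-1\}$ for the total degree $*=p+q$, one finds generators in degrees $*=pP-n+1$ and $*=pP$, one of each for every $p\ge 1$.

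Next I would invoke Lemma~\ref{lem: ssterm}: for $n\ge3$ the spectral sequence degenerates at the $E^1$-page, so $\WFH_*(L;W_k)\cong\bigoplus_{p+q=*}E^1_{pq}$. It then suffices to check that the degrees $0$, $\{pP-n+1\}_{p\ge1}$ and $\{pP\}_{p\ge1}$ are pairwise distinct; this follows from $P\ge 2n-2>n-1>0$, which forces the families to interleave as $0<P-n+1<P<2P-n+1<2P<\cdots$, without collision. Hence each nonvanishing group is exactly $\Z_2$, and relabelling $p=l$ gives precisely the asserted formula.

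The genuine content of the statement has already been spent in the preparatory results --- the Morse-Bott property (Corollary~\ref{cor: mbtype}), the index formula (Lemma~\ref{lem: indcpt}, which in turn rests on Proposition~\ref{lem: indrel}), and the $E^1$-degeneration (Lemma~\ref{lem: ssterm}) --- so for the corollary itself the only delicate point is the degree bookkeeping: one must correctly combine the normalization $|x|=\mu(x)-n/2$, the half-integer $\shift$, the $+\frac12\dim L$ in \eqref{eq: MBss}, and the convention $\WFH_*^{<\epsilon}(L;W)\cong H_{*+n}(L,\mathcal{L})$, so that the $p=0$ generator really lands in degree $0$ and the two families of chord-generators land exactly where claimed.
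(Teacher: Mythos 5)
Your proposal is correct and follows essentially the same route as the paper: the corollary there is obtained exactly by assembling the $E^1$-page of the Morse--Bott spectral sequence from Lemma \ref{lem: indcpt} and the components $N\cdot\mathcal{L}\cong S^{n-1}$, invoking the $E^1$-degeneration of Lemma \ref{lem: ssterm}, and reading off the total degrees (the paper just displays the case $(n,k)=(3,2)$ and states the general formula). Your explicit degree bookkeeping, including the check that the degrees $0$, $pP-n+1$, $pP$ are pairwise distinct, is a welcome detail the paper leaves implicit.
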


Combining the action filtration on the $E^1$-page and Corollary \ref{cor: WFHcomp}, we conclude the following.

\begin{Cor}
For $n \geq 3$ and $k \geq 1$, the wrapped Floer homology groups $\WFH_*(L;W_{k})$ admits a linear growth. 
\end{Cor}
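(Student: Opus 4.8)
The plan is to extract the linear growth directly from Corollary~\ref{cor: WFHcomp} together with the degeneration of the Morse-Bott spectral sequence at the $E^1$-page established in Lemma~\ref{lem: ssterm}. Recall that in the spectral sequence of Theorem~\ref{thm: ss} the column $p=0$ contributes $H_*(L,\mathcal{L};\Z_2)$, which is finite-dimensional since $L\cong B^n$ and $\mathcal{L}\cong S^{n-1}$, while for $p\ge 1$ the column contributes $H_*(\mathcal{L}_{T_p};\Z_2)$; for $A_k$-type singularities $\mathcal{L}_{T_p}=p\cdot\mathcal{L}\cong S^{n-1}$, so $\dim H_*(\mathcal{L}_{T_p};\Z_2)=2$. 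Since $d^r\equiv 0$ for all $r\ge 1$, there is no cancellation between columns, and each Morse-Bott component $N\cdot\mathcal{L}$ contributes exactly $2$ to $\dim\WFH_*(L;W_k)$, matching the explicit formula in Corollary~\ref{cor: WFHcomp}.

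Next I would locate these contributions in the action filtration. The spectral sequence is built from the action-filtered complex, and the column indexed by a period $T_p\in\text{Spec}_0(\Sigma_k,\alpha,\mathcal{L})$ sits at filtration level essentially $T_p$: by property~\ref{p1}, for $c\notin\text{Spec}(\Sigma_k,\alpha,\mathcal{L})$ the group $\WFH^{<c}_*(L;W_k)$ is the wrapped Floer homology of an admissible Hamiltonian of slope $c$, whose Hamiltonian chords correspond to the Reeb chords of period $<c$, and the action of the chords produced near a Morse-Bott component of period $T$ differs from $-T$ by an amount bounded uniformly in $T$ (coming from the primitive of $\widehat{\lambda}|_{\widehat L}$ and the $C^2$-small perturbation). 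Since $\text{Spec}_0(\Sigma_k,\alpha,\mathcal{L})=\{T_k,2T_k,3T_k,\dots\}$ with $T_k$ the minimal period, the number of columns with $T_p<c$ is $\lfloor c/T_k\rfloor$ up to an additive constant. The complex computing $\WFH^{<c}_*(L;W_k)$ retains only these columns and the $p=0$ column, and its spectral sequence is the corresponding truncation of the full one, hence again degenerates at $E^1$. Therefore
$$
\dim\WFH^{<c}_*(L;W_k)\;\ge\;2\left\lfloor\frac{c}{T_k}\right\rfloor-C
$$
for a constant $C$ independent of $c$.

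Dividing by $c$ and letting $c\to\infty$ gives
$$
\liminf_{c\to\infty}\frac{\dim\WFH^{<c}_*(L;W_k)}{c}\;\ge\;\frac{2}{T_k}\;>\;0,
$$
which is precisely the defining property of a linear growth (Definition~\ref{def: lineargrowth}). The only step needing care is the bookkeeping above — matching, uniformly in the period $T$, the action window at which a Morse-Bott component of Reeb chords of period $T$ contributes to the filtered wrapped Floer homology — but this is a standard feature of the action-filtered Morse-Bott construction recalled in Section~\ref{sec: MBss} (compare \cite[Theorem~B.11]{KvK}) and needs no new estimates; everything else is immediate from Corollary~\ref{cor: WFHcomp}.
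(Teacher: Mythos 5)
Your argument is correct and follows exactly the route the paper takes (the paper's proof is the one-line remark that one combines the action filtration on the $E^1$-page with Corollary~\ref{cor: WFHcomp}): the degeneration at $E^1$ from Lemma~\ref{lem: ssterm} plus the fact that the period of $N\cdot\mathcal{L}$ grows linearly in $N$ yields $\dim\WFH^{<c}_*(L;W_k)\gtrsim 2c/T_k$, hence a positive limit in Definition~\ref{def: lineargrowth}. Your extra bookkeeping about the action window of each Morse-Bott column is just an explicit spelling-out of what the paper leaves implicit in the filtered construction of Theorem~\ref{thm: ss}.
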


This completes a proof of the uniform lower bounds in Theorem \ref{main thm for a_k}. The condition $H_c^1(W;\R) = 0$ holds since the Milnor fibers are $(n-1)$-connected in general.

In addition, note that the wrapped Floer homology group $\WFH(L;W_k)$ are infinite-dimensional. By applying Theorem \ref{ThmB}, we have the following corollary, which was already proven by Seidel in \cite{Sei}

\begin{Cor}
The component of fibered twists $[\tau]$ in the group $\pi_0(\symp^c(\widehat W_k))$ has infinite order for $k \geq 1$ and $n \geq 3$.
\end{Cor}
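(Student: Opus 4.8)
The plan is to obtain this corollary as a direct application of Theorem~\ref{ThmB}, whose hypotheses have essentially all been verified over the course of Section~\ref{sec: A_k}. Recall that Theorem~\ref{ThmB} asks for a Liouville domain with periodic Reeb flow on the boundary, the vanishing $H^1_c(W;\R)=0$, and an admissible Lagrangian $L$ with $\WFH_*(L;W)$ infinite dimensional; under these it concludes that $[\tau]$ has infinite order in $\pi_0(\symp^c(\widehat W))$. So it suffices to check each of these three inputs for $W_k$.

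The periodic Reeb flow on $\Sigma_k$ is the one built from the weighted homogeneity of the $A_k$-polynomial, with explicit flow \eqref{eq: Reeb A_k} of period $2\pi\lcm(k+1,2)$; this is precisely what produces the fibered twist $\tau:\widehat W_k\to\widehat W_k$ of Section~\ref{Fibered twist}. For the topological condition, Milnor fibers of isolated hypersurface singularities are $(n-1)$-connected, so by Poincar\'e--Lefschetz duality $H^1_c(W_k;\R)\cong H_{2n-1}(W_k;\R)=0$, which certainly holds in our range $n\geq 3$. Finally, I would take $L$ to be a connected component of the real Lagrangian $\Fix(\rho)$ for the anti-symplectic involution \eqref{eq: conjmap}: it is admissible by the lemma in Section~\ref{def:real}, and by Lemma~\ref{lem: topofLag} it is diffeomorphic to the ball $B^n$ with $\p L\cong S^{n-1}$, so the topological conditions \ref{ms1}--\ref{ms3} required for the Morse--Bott spectral sequence hold for the pair $(W_k,L)$. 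Corollary~\ref{cor: WFHcomp} then computes $\WFH_*(L;W_k)$ to be $\Z_2$ in degree $0$ and in the two infinite families of degrees $\{(n-2)(k+1)+2\}l$ and $\{(n-2)(k+1)+2\}l-n+1$ with $l\in\N$; in particular it is infinite dimensional.

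With the three hypotheses in hand, Theorem~\ref{ThmB} immediately yields that $[\tau]\in\pi_0(\symp^c(\widehat W_k))$ has infinite order for all $k\geq 1$ and $n\geq 3$. I do not expect any genuine obstacle here: the only point requiring a little care is that the real Lagrangian really satisfies every standing assumption entering Corollary~\ref{cor: WFHcomp} (admissibility together with the topological conditions \ref{ms1}--\ref{ms3}), but all of these have already been established in Section~\ref{sec: A_k}, and the substantive work --- the periodicity of the first page of the spectral sequence and the resulting infinite-dimensionality --- is exactly what Corollary~\ref{cor: WFHcomp} packages.
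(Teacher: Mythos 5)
Your proposal is correct and follows exactly the paper's route: the paper likewise deduces the corollary by feeding the infinite dimensionality of $\WFH_*(L;W_k)$ from Corollary \ref{cor: WFHcomp} (with $L$ a ball component of the real Lagrangian) and the vanishing $H^1_c(W_k;\R)=0$ coming from the high connectivity of the Milnor fiber into Theorem \ref{ThmB}. No gaps beyond the routine checks you already flag, all of which are established earlier in Section \ref{sec: A_k}.
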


\begin{Rem}
Computations here using the Morse-Bott spectral sequence apply to a broader class of weighted homogeneous polynomials. As a simple example, one takes a homogeneous polynomial $z_0^k + \cdots +z_n^k$, with the anti-symplectic involution given by the standard complex conjugate on the Milnor fiber. Then a connected component of the fixed point set is still a Lagrangian ball for $k$ odd, and we can compute the $E^1$-page of the spectral sequence. 

In particular, for $k = n-1$, where this is exactly the case when the index (\ref{eq: indform}) vanishes, we directly see from the $E^1$-page that the wrapped Floer homology is infinite dimensional. Therefore the corresponding component $[\tau]$ of the fibered twists has infinite order by Theorem \ref{ThmB}. 
\end{Rem}

\section{Complements of a symplectic hypersurface in a real symplectic manifold} \label{sec: chyper}
\subsection{Construction}
A triple $(M,\ow,\rho)$ is called a \emph{real} symplectic manifold if $(M,\ow)$ is a symplectic manifold and $\rho\in \Diff(M)$ is an anti-symplectic involution, i.e., $\rho^2=\id$ and $\rho^*\ow=-\ow$. Let $(M,\ow,\rho)$ be a closed, real, symplectic manifold with $[\omega]\in H^2(M;\Z)$. A symplectic submanifold $Q\subset M$ of codimension 2 is called a {\it symplectic hypersurface of degree} $k\in \N$ in $(M,\ow)$ if $[Q]\in H_{2n-2}(M;\mathbb{Z})$ is Poincar\'{e} dual to $k[\omega]$. A symplectic hypersurface $Q\subset M$ is called {\it $\rho$-invariant} if it satisfies $\rho(Q)\subset Q$. We shall prove that the complement of a $\rho$-invariant symplectic hypersurface in a real symplectic manifold is a real Liouville domain whose boundary has a periodic Reeb flow. See Proposition \ref{complement} for the precise statement.

	\begin{Rem}
The degree of a symplectic hypersurface depends on the choice of a symplectic form. Since we will consider a primitive cohomology class of a symplectic form (we can always assume this by rescaling the symplectic form), it does not cause any  ambiguity.
\end{Rem}
Let $Q$ be a $\rho$-invariant symplectic hypersurface in $M$ of degree $k\in \N$. Note that a symplectic hypersurface $(Q,k\ow|_Q,\rho|_Q)$ is an integral, real,  symplectic manifold. We construct a standard symplectic disk bundle over $(Q,k\ow|_Q)$ which admits an anti-symplectic involution induced by the push-forward of the involution $\rho|_Q$. We refer to \cite[Section 2.1]{Bir1} for details on the standard symplectic disk bundles.

Consider the symplectic normal bundle of $Q$ defined by
$$NQ:=\{v\in TM|_Q \ |\ \omega(v,w)=0 \text{ for all $w\in TQ$} \},$$
which is a symplectic vector bundle of rank 2 over $Q$. Choosing any complex structure on $NQ$, the symplectic normal bundle $NQ$ can be seen as a complex line bundle over $Q$ with $c_1(NQ)=k[\omega|_Q]$. For a fixed hermitian metric $|\cdot |$ on $NQ$, choose a connection $\nabla$ on $NQ$ with curvature $R^\nabla=2\pi ik\omega|_Q$. Denote $H^\nabla$ the horizontal distribution and $\alpha^\nabla$ the global angular 1-form on $NQ\setminus 0$ associated to $\nabla$, i.e.,
\begin{eqnarray*}
	\alpha^\nabla_u(u)=0,\quad \alpha^\nabla_u(iu)=\frac{1}{2\pi},\quad \alpha^\nabla|_{H^\nabla}=0,
\end{eqnarray*}
where $r$ is a radial coordinate on fibers induced by the metric. Let $\pi:NQ\to Q$ be the projection map. In this convention, we have $d\alpha^\nabla=-k\pi^*\omega|_Q$. Fix $0<\epsilon<1$. We call
$$N^\epsilon Q:=\{v\in NQ\ | \ |v|<\epsilon\}$$ a {\it standard symplectic disk bundle over $(Q,k\omega|_Q)$}. The total space $N^\epsilon Q$ carries a  symplectic form 
$$\Omega:=\pi^*(k\omega|_Q)+d(r^2\alpha^\nabla).$$
Note that there is a Liouville form on $N^\epsilon Q\setminus 0$, namely, 
$\lambda_0=(r^2-1)\alpha^\nabla$. The Liouville vector field $X_{\lambda_0}$, which is the solution to $\Omega(X_{\lambda_0},\cdot)=\lambda_0$, is given by
$\displaystyle X_{\lambda_0} = \frac{r^2-1}{2r}\frac{\partial}{\partial r}.$
Since $X_{\lambda_0}$ points inward to the zero section and is transverse to a circle bundle $P_\delta:=\{|v|=\delta \}$ for any $0<\delta<\epsilon$, the Liouville form $\lambda_0$ restricts to a contact form ${\lambda_0}|_{P_\delta}$ on $P_\delta$.  Note that a global angular 1-form $\alpha^\nabla$ also restricts to a contact form on $P_\delta$. In this case, a contact manifold $(P_\delta,\xi=\ker \alpha^\nabla)$ is called a {\it prequantization bundle }(or Boothby-Wang bundle) over $(Q,k\omega|_Q)$. A prequantization bundle has a periodic Reeb flow with the same period for all simple periodic orbits, and simple periodic orbits are given by fiber circles. See \cite{BW}, \cite[Section 7.2]{Gei} for details on prequantization bundles.

Since a symplectic hypersurface $Q$ is $\rho$-invariant, the push-forward $\rho_*:TM\to TM$ of $\rho$ descends to an involution $\mathcal{R}:=\rho_*: NQ \to NQ$ that preserves  fibers, i.e., for $x\in Q$, we have $\mathcal{R}(N_xQ)=N_{\rho(x)}Q$. Assume that the hermitian metric on $NQ$ is chosen to be $\rho$-invariant, so $\mathcal{R}$ restricts to an involution on $N^\epsilon Q$.

\begin{Lem}\label{comp_anti}
The involution $\mathcal{R}:N^\epsilon Q\to N^\epsilon Q$ is anti-symplectic with respect to the symplectic form $\Omega$, i.e., $\mathcal{R}^*\Omega=-\Omega$.
\end{Lem}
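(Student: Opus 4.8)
The plan is to split $\Omega = \pi^*(k\omega|_Q) + d(r^2\alpha^\nabla)$ into its two summands and show each changes sign under $\mathcal{R}$. First I would collect the equivariance properties of $\mathcal{R}$ that I need. By construction $\mathcal{R}$ is a fibrewise $\R$-linear bundle map covering $\rho|_Q\colon Q\to Q$, so $\pi\circ\mathcal{R} = (\rho|_Q)\circ\pi$; since the hermitian metric is assumed $\rho$-invariant, $\mathcal{R}$ is a fibrewise isometry, hence $\mathcal{R}^*r = r$ and $\mathcal{R}$ maps $N^\epsilon Q$ to itself; and since $\rho^*\omega = -\omega$, the restriction of $\rho_*$ to each fibre $(N_xQ,\omega|_{N_xQ})$ is anti-symplectic, hence reverses the fibre orientation. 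I would also point out that we may additionally take the complex structure on $NQ$ to be $\rho$-anti-invariant and the connection $\nabla$ to be $\rho$-invariant: both spaces of choices are nonempty and convex, the induced $\mathcal{R}$-action on them is an involution, and the normalization $R^\nabla = 2\pi i k\omega|_Q$ is preserved by this action because conjugate-linearity of $\mathcal{R}$ on fibres cancels the sign coming from $(\rho|_Q)^*(\omega|_Q) = -\omega|_Q$. With such a $\nabla$, $\mathcal{R}$ preserves the horizontal distribution $H^\nabla$.

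For the first summand, combining $\pi\circ\mathcal{R} = (\rho|_Q)\circ\pi$ with $(\rho|_Q)^*(\omega|_Q) = -\omega|_Q$ gives directly
$$
\mathcal{R}^*\pi^*(k\omega|_Q) = \pi^*(\rho|_Q)^*(k\omega|_Q) = -\,\pi^*(k\omega|_Q).
$$
For the second summand it is enough to prove $\mathcal{R}^*\alpha^\nabla = -\alpha^\nabla$, since then $\mathcal{R}^*(r^2\alpha^\nabla) = (\mathcal{R}^*r)^2\,\mathcal{R}^*\alpha^\nabla = -\,r^2\alpha^\nabla$ on $N^\epsilon Q\setminus 0$, hence everywhere by continuity, and $d$ commutes with pullback. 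To establish $\mathcal{R}^*\alpha^\nabla = -\alpha^\nabla$ I would evaluate both sides on the three kinds of tangent vectors that together span $T(N^\epsilon Q\setminus 0)$ and that characterize $\alpha^\nabla$: horizontal vectors, the radial direction $u$, and the angular direction $iu$. On $H^\nabla$ both sides vanish because $\mathcal{R}$ preserves $H^\nabla$; on the radial direction both sides vanish because $\mathcal{R}_*u = \mathcal{R}(u)$ is again radial and $\alpha^\nabla$ kills radial directions; on the angular direction $\mathcal{R}_*(iu) = -\,i\,\mathcal{R}(u)$ by $\rho$-anti-invariance of the complex structure (equivalently, because $\mathcal{R}$ reverses the fibre orientation), so $\alpha^\nabla_{\mathcal{R}(u)}(\mathcal{R}_*(iu)) = -\tfrac1{2\pi} = -\,\alpha^\nabla_u(iu)$. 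This gives $\mathcal{R}^*\alpha^\nabla = -\alpha^\nabla$.

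Putting the two computations together yields
$$
\mathcal{R}^*\Omega = \mathcal{R}^*\pi^*(k\omega|_Q) + d\big(\mathcal{R}^*(r^2\alpha^\nabla)\big) = -\,\pi^*(k\omega|_Q) - d(r^2\alpha^\nabla) = -\,\Omega,
$$
which is the claim. The step I expect to be the main obstacle is not the bookkeeping above but the preliminary remark that the auxiliary data (complex structure, connection) can be chosen $\rho$-equivariantly while keeping the curvature normalization; once that is in place, everything else is a direct check against the defining properties of $\alpha^\nabla$ recorded just before the lemma.
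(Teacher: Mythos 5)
Your argument is correct, and at its core it follows the same strategy as the paper: split $\Omega$ into summands and check that each pulls back to minus itself, with the base term handled identically via $\pi\circ\mathcal{R}=\rho|_Q\circ\pi$ and $(\rho|_Q)^*\omega|_Q=-\omega|_Q$. The difference lies in the fiber term. The paper rewrites $\Omega=k(1-r^2)\pi^*\omega|_Q+2r\,dr\wedge\alpha^\nabla$ and disposes of the second summand in one line, citing the fiberwise fact that a linear anti-symplectic involution of a two-dimensional symplectic vector space is a reflection; you instead keep the original decomposition and prove the stronger identity $\mathcal{R}^*\alpha^\nabla=-\alpha^\nabla$ by testing on horizontal, radial and angular vectors. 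What your route buys is precision about the auxiliary choices: the identity $\mathcal{R}^*(r\,dr\wedge\alpha^\nabla)=-r\,dr\wedge\alpha^\nabla$ is not a purely fiberwise statement, because the kernel of $r\,dr\wedge\alpha^\nabla$ is exactly $H^\nabla$, so one needs $\mathcal{R}$ to preserve the horizontal distribution; if $\nabla$ is not chosen $\rho$-equivariantly, then $\mathcal{R}^*\alpha^\nabla$ differs from $-\alpha^\nabla$ by the pullback of a closed one-form on $Q$, and $\mathcal{R}^*\Omega+\Omega$ picks up a term of the form $r\,dr\wedge\pi^*\beta$ which need not vanish. Your preliminary remark that the connection can be averaged to an $\mathcal{R}$-invariant one with the prescribed curvature supplies exactly the hypothesis that the paper's one-line reflection argument uses implicitly, at the cost of a slightly longer proof. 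One small point: the ``convexity'' phrasing for the space of complex structures is loose, and in fact the anti-invariance of $J$ is automatic here, since a fiberwise norm-preserving, orientation-reversing map between hermitian lines is conjugate-linear (this is essentially your parenthetical remark), so the only genuine extra choice is the equivariant connection.
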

\begin{proof}
Note that $\Omega=k(1-r^2)\pi^*\ow|_Q+2rdr\wedge \alpha^\nabla$. Since every linear anti-symplectic involution on a 2-dimensional symplectic vector space is a reflection, it follows that $\mathcal{R}^*(rdr\wedge\alpha^\nabla)=-rdr\wedge \alpha^\nabla$. Using the identity $\pi\circ \mathcal{R}=\rho|_Q\circ \pi$, we see that $\mathcal{R}^*(\pi^*\omega|_Q)=\pi^*(\rho|_Q^*\omega|_Q)=-\pi^*\omega|_Q$. This proves that $\mathcal{R}^*\Omega=-\Omega$. 
\end{proof} 
The following lemma tells us that a standard symplectic disk bundle equipped with the anti-symplectic involution $\mathcal{R}$ is the standard model to a tubular neighborhood of a $\rho$-invariant symplectic hypersurface. 
\begin{Lem}\label{neighborhood of hyp}
	There is a $\rho$-invariant tubular neighborhood $\nu_M(Q)$ of $Q$ in $M$ and a symplectomorphism $\Phi:(\nu_M(Q),\omega)\to (N^{\epsilon} Q,\Omega)$ (with possibly smaller $\epsilon$) such that $\mathcal{R}\circ \Phi=\Phi\circ \rho$.
\end{Lem}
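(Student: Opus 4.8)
The plan is to prove this as an equivariant refinement of Weinstein's symplectic neighbourhood theorem for symplectic submanifolds, in the disk-bundle formulation of \cite[Section 2.1]{Bir1}. I would run the usual construction while keeping every auxiliary choice invariant under $\rho$ (and the induced involution $\mathcal R$), so that the resulting identification automatically intertwines the two involutions.

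First I would fix $\rho$-invariant geometric data along $Q$. Averaging an auxiliary Riemannian metric over the $\Z_2$-action generated by $\rho$ and passing to the associated $\omega$-compatible almost complex structure, one obtains an $\omega$-compatible $J$ with $d\rho\circ J=-J\circ d\rho$, hence a $\rho$-invariant metric $g:=\omega(\cdot,J\cdot)$. Using $g$ I would identify $NQ$ with the orthogonal complement $(TQ)^{\perp_g}\subset TM|_Q$; since $\rho|_Q$ is a $g$-isometry preserving $TQ$, it preserves this complement and restricts there to $\mathcal R$. The $g$-exponential map is $\rho$-equivariant, so it produces a $\rho$-invariant tubular neighbourhood of $Q$ together with a diffeomorphism $\Phi_0$ from it onto a disk bundle in $NQ$ satisfying $\mathcal R\circ\Phi_0=\Phi_0\circ\rho$. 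I would then equip $NQ$ with the hermitian structure induced by $(g,J)$, for which $\mathcal R$ is a fibrewise $\R$-linear isometry, and choose a hermitian connection $\nabla$ of curvature $2\pi i k\,\omega|_Q$; such $\nabla$ exists because $c_1(NQ)=k[\omega|_Q]$, and its connection form can be averaged to make $\nabla$ compatible with $\mathcal R$. After the standard radial reparametrisation of the fibres and the usual adjustment of the horizontal distribution, this arranges that $\Phi_0^*\Omega$ and $\omega$ agree on all of $TM|_Q$.

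The next step is an equivariant relative Moser argument on the disk bundle $N^{\epsilon'}Q$. Writing $\omega':=(\Phi_0)_*\omega$, which agrees with $\Omega$ along the zero section $Q$ and is $\mathcal R$-anti-invariant since $\rho^*\omega=-\omega$ and $\mathcal R^*\Omega=-\Omega$ (Lemma \ref{comp_anti}), the closed $2$-form $\beta:=\omega'-\Omega$ vanishes along $Q$. By the relative Poincaré lemma there is a $1$-form $\sigma$ with $d\sigma=\beta$ that vanishes along $Q$, and replacing $\sigma$ by $\tfrac12(\sigma-\mathcal R^*\sigma)$ — still a primitive of $\beta$ vanishing along $Q$, because $\mathcal R^*\beta=-\beta$ — I may assume $\mathcal R^*\sigma=-\sigma$. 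The path $\omega_t:=\Omega+t\beta$, $t\in[0,1]$, consists of symplectic forms on a possibly smaller neighbourhood of $Q$, all $\mathcal R$-anti-invariant; let $X_t$ be defined by $\iota_{X_t}\omega_t=-\sigma$. Then $X_t$ vanishes along $Q$, so after shrinking its flow $\psi_t$ exists for $t\in[0,1]$, and $\tfrac{d}{dt}(\psi_t^*\omega_t)=\psi_t^*(d\iota_{X_t}\omega_t+d\sigma)=0$, giving $\psi_1^*\omega'=\Omega$. Since $\omega_t$ and $\sigma$ transform the same way under $\mathcal R$, uniqueness of $X_t$ forces $\mathcal R_*X_t=X_t$, so each $\psi_t$ commutes with $\mathcal R$. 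Finally $\Phi:=\psi_1^{-1}\circ\Phi_0$, restricted to a suitable $\rho$-invariant neighbourhood $\nu_M(Q)$ with image some $N^\epsilon Q$, satisfies $\Phi^*\Omega=\Phi_0^*\big((\psi_1^{-1})^*\Omega\big)=\Phi_0^*\omega'=\omega$ and $\mathcal R\circ\Phi=\Phi\circ\rho$.

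I expect the main obstacle to be the first step: producing $\Phi_0$ that is simultaneously a first-order symplectic identification along $Q$ and strictly $\mathcal R$-equivariant, i.e.\ carrying out the choice of the hermitian connection with the prescribed curvature and all fibrewise normalisations within the $\rho$-invariant category. Once all the data carry the $\Z_2$-symmetry, the relative Poincaré lemma and the Moser flow go through essentially verbatim, and the equivariance of $\Phi$ comes for free.
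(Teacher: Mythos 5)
Your overall strategy is the right one and is essentially what the paper has in mind: the paper does not write the proof out at all, but says it is an application of the Moser argument and points to \cite[Theorem 2]{Mey}, which treats the analogous equivariant statement for real Lagrangians and cotangent bundles. You carry out the corresponding equivariant Weinstein neighbourhood theorem for a $\rho$-invariant symplectic hypersurface, and your equivariant bookkeeping is correct: an averaged metric gives an $\omega$-compatible $J$ anticommuting with $d\rho$ and hence an invariant $g$ and an equivariant exponential map; $\mathcal{R}^*\beta=-\beta$ lets you average the relative primitive so that $\mathcal{R}^*\sigma=-\sigma$; and then $\iota_{X_t}\omega_t=-\sigma$ together with nondegeneracy forces $X_t$ to be $\mathcal{R}$-invariant, so the Moser flow commutes with $\mathcal{R}$ and $\Phi=\psi_1^{-1}\circ\Phi_0$ intertwines $\rho$ and $\mathcal{R}$. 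One small point you gloss over in the step you yourself flag: to identify the symplectic normal bundle $TQ^{\omega}$ with $(TQ)^{\perp_g}$ and to get a fibrewise unitary, $\mathcal{R}$-compatible identification, you should also arrange $J(TQ)=TQ$ along $Q$ (construct the invariant compatible structure on $TQ\oplus TQ^{\omega}$ first and extend equivariantly); this is standard and does not change the architecture of your argument.

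There is, however, one step that cannot hold verbatim, namely the claim that after the usual adjustments $\Phi_0^*\Omega$ and $\omega$ agree on all of $TM|_Q$. With the paper's model form $\Omega=\pi^*(k\omega|_Q)+d(r^2\alpha^\nabla)$ the restriction of $\Omega$ to the zero section is $k\,\omega|_Q$, whereas $\omega$ restricts to $\omega|_Q$ on $TQ$, so for $k\ge 2$ no such zeroth-order matching is possible; in fact no symplectomorphism $(\nu_M(Q),\omega)\cong(N^{\epsilon}Q,\Omega)$ can exist at all in the situations the paper later considers, since under the retractions to $Q$ the class $[\Omega]$ corresponds to $k[\omega|_Q]$ while $[\omega]$ corresponds to the primitive class $[\omega|_Q]$ (cf.\ Lemma \ref{comp_subtop}), and an induced automorphism of $H^2(Q;\Z)$ cannot carry a class divisible by $k$ to a primitive one. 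This is a normalization issue inherited from the statement rather than a flaw in your method: the model form should restrict to $\omega|_Q$ on the zero section, i.e.\ one should take $\pi^*(\omega|_Q)+d(r^2\alpha^\nabla)$ with the same angular form $\alpha^\nabla$ (equivalently $\tfrac{1}{k}\Omega$ after a radial rescaling of the fibres), with Liouville primitive $(r^2-\tfrac{1}{k})\alpha^\nabla$ on the complement of the zero section, and the subsequent constructions in the paper go through with the periods rescaled accordingly. With that reading, your first-order matching along $Q$ is the standard normal form and the rest of your equivariant Moser argument is complete.
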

A proof is an application of the Moser argument. We refer to \cite[Theorem 2]{Mey} for a proof. In there, it is proven that a neighborhood of a real Lagrangian $L$ in a real symplectic manifold is identified with a neighborhood of the zero section in  the cotangent bundle $T^*L$ of the real Lagrangian with an anti-symplectic involution defined by $v\mapsto -v$ for $v\in T^*Q$. We now prove the following proposition. See also \cite[Lemma 3.1]{FvK}.
\begin{prop}\label{complement}
		Let $(M,\omega,\rho)$ be a closed, real symplectic manifold with $[\ow]\in H^2(M;\Z)$ and let $Q\subset M$ be a $\rho$-invariant symplectic hypersurface in $M$ of degree $k$.  Then, the complement of $Q$ in $M$,	$$(W:=M\setminus \nu_M(Q),\ \omega|_W=d\lambda,\ \rho|_W),$$ is a real Liouville domain whose contact boundary $(\Sigma, \xi:=\ker\lambda|_\Sigma)$ is a prequantization bundle over $(Q,k\omega|_Q)$, where $\nu_M(Q)$ is a $\rho$-invariant tubular neighborhood of $Q$ in $M$.
	\end{prop}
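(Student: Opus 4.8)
The plan is to pull everything back to the standard model on the symplectic disk bundle $(N^\epsilon Q,\Omega)$ and to arrange all choices $\rho$-equivariantly. The three ingredients are: the equivariant normal form of Lemma \ref{neighborhood of hyp}, the vanishing of $[\omega|_W]$, and a gluing step that makes a global primitive agree with the model Liouville form $\lambda_0=(r^2-1)\alpha^\nabla$ near the boundary.

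\textbf{Normal form and the definition of $W$.} First I would apply Lemma \ref{neighborhood of hyp} to get a $\rho$-invariant tubular neighborhood and an equivariant symplectomorphism $\Phi\colon(\nu_M(Q),\omega)\to(N^\epsilon Q,\Omega)$ with $\mathcal{R}\circ\Phi=\Phi\circ\rho$. Shrinking $\epsilon$ if necessary, fix $0<\delta<\epsilon<1$, redefine $\nu_M(Q):=\Phi^{-1}(\{r<\delta\})$ and set $W:=M\setminus\nu_M(Q)$, so that $\partial W=\Phi^{-1}(P_\delta)$. Since the Hermitian metric and connection on $NQ$ can be taken $\rho$-invariant, $\mathcal{R}$ is a fiberwise reflection with $\mathcal{R}^*r=r$ and $\mathcal{R}^*\alpha^\nabla=-\alpha^\nabla$; in particular $\mathcal{R}$ preserves $\{r=\delta\}$, hence $\rho(W)=W$ and $\rho$ restricts to an involution of $W$ preserving $\partial W$ (with fixed set $\Fix(\rho)\cap W$). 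Note also $\mathcal{R}^*\lambda_0=-\lambda_0$, consistent with Lemma \ref{comp_anti}.

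\textbf{Exactness and the Liouville form.} Next I would show $[\omega|_W]=0\in H^2(W;\R)$: the long exact sequence of the pair $(M,W)$ together with the Thom isomorphism $H^2(M,W;\R)\cong H^2(M,M\setminus Q;\R)\cong H^0(Q;\R)$ shows that the image of $H^2(M,W;\R)\to H^2(M;\R)$ is spanned by $\mathrm{PD}[Q]=k[\omega]$, so $k[\omega]$ — and hence $[\omega]$, since $k\neq0$ — dies in $H^2(W;\R)$. On the collar $C:=\Phi^{-1}(\{\delta\le r<\epsilon\})$ the model form $\Phi^*\lambda_0$ is a primitive of $\omega|_C$ whose Liouville field $\tfrac{r^2-1}{2r}\partial_r$ points towards $Q$, i.e.\ outward from $W$, and whose restriction $(\delta^2-1)\,\alpha^\nabla|_{P_\delta}$ is a contact form with kernel $\xi=\ker\alpha^\nabla$. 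Picking a global primitive $\beta$ of $\omega|_W$, I would correct it by a closed form on $W$ so that $[\beta|_C-\Phi^*\lambda_0]$ vanishes in $H^1(C;\R)$, write the difference as $dh$ on $C$, and put $\lambda':=\beta-d(\chi h)$ with $\chi$ equal to $1$ near $\partial W$ and supported in $C$; then $d\lambda'=\omega|_W$ and $\lambda'=\Phi^*\lambda_0$ near $\partial W$, so $(W,\lambda')$ is a Liouville domain with contact boundary the prequantization bundle $(P_\delta,\ker\alpha^\nabla)$ over $(Q,k\omega|_Q)$. Finally, averaging $\lambda:=\tfrac12(\lambda'-\rho^*\lambda')$ preserves $d\lambda=\omega|_W$ (because $\rho^*\omega=-\omega$) and gives $\rho^*\lambda=-\lambda$; since $\rho^*\lambda'=(\mathcal{R}\circ\Phi)^*\lambda_0=-\Phi^*\lambda_0$ near $\partial W$, the averaging does not disturb the normal form there, so $(W,\lambda,\rho|_W)$ is the desired real Liouville domain.

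\textbf{Main obstacle.} The delicate point is the gluing in the third step: one must know that the obstruction class $[\beta|_C-\Phi^*\lambda_0]\in H^1(\partial W;\R)$ — equivalently $(\delta^2-1)[\alpha^\nabla|_{P_\delta}]$ — lies in the image of $H^1(W;\R)\to H^1(\partial W;\R)$, so that a global primitive can be made to match the model collar. This is exactly where the prequantization structure of $\partial W$ over $Q$ is used, and I would either argue it directly via the Mayer--Vietoris/Gysin sequences for $M=W\cup\nu_M(Q)$ and the circle bundle $P_\delta\to Q$ (using that the Euler class is $k[\omega|_Q]\neq0$), or appeal to the analogous constructions in \cite[Section 2.1]{Bir1} and \cite[Lemma 3.1]{FvK}. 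The remaining steps — the equivariant normal form, the cohomological vanishing, and the averaging — are routine.
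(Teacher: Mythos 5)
Your outline parallels the paper in its first, second and last steps (equivariant normal form via Lemma \ref{neighborhood of hyp}, exactness of $\omega|_W$, and the antisymmetrization $\lambda=\tfrac12(\lambda'-\rho^*\lambda')$, which indeed preserves the collar behaviour because $\mathcal{R}^*\lambda_0=-\lambda_0$), but the step you yourself flag as the main obstacle is a genuine gap, and the justification you sketch does not close it. You need the class $\theta:=[\,\beta|_C-\Phi^*\lambda_0\,]\in H^1(\Sigma;\R)$ to lie in the image of the restriction map $H^1(W;\R)\to H^1(\Sigma;\R)$. (As an aside, $\theta$ is not ``$(\delta^2-1)[\alpha^\nabla|_{P_\delta}]$'': the form $\alpha^\nabla|_{P_\delta}$ is a contact form and is not closed; only the difference $\beta|_C-\Phi^*\lambda_0$ is closed.) The Gysin argument with Euler class $k[\omega|_Q]\neq0$ shows that $\pi^*\colon H^1(Q;\R)\to H^1(\Sigma;\R)$ is an isomorphism; this proves that the obstruction can be absorbed by adding the pullback $\pi^*\eta$ of a closed $1$-form on $Q$ to the \emph{model} form, but it says nothing about the image of $H^1(W;\R)$. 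In fact, by the Thom--Gysin sequence of the pair $(M,M\setminus Q)$ the map $H^1(M;\R)\to H^1(M\setminus Q;\R)\cong H^1(W;\R)$ is onto, so the image of $H^1(W;\R)$ in $H^1(\Sigma;\R)\cong H^1(Q;\R)$ is exactly the image of $H^1(M;\R)\to H^1(Q;\R)$, which is in general a proper subspace (for instance, a Donaldson hypersurface of large degree in a symplectic $4$-manifold with small $b_1$ has large genus, so $H^1(Q;\R)$ is much bigger than $H^1(M;\R)$). Proposition \ref{complement} is stated for an arbitrary $\rho$-invariant symplectic hypersurface; the Lefschetz-type hypotheses ($Q$ of Donaldson type, simply connected, $n\ge3$), which would force $H^1(\Sigma;\R)=0$ and make your matching automatic, appear only later in the setup of Section \ref{comp_setup}. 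Nothing in your argument places $\theta$ in this subspace, and there is no general reason for it; if it were automatic, the twist by $\eta$ in \cite[Lemma 2.1]{Op} would be superfluous.

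This is precisely why the paper's proof is organized differently: it invokes \cite[Lemma 2.1]{Op} to extend $\Phi^*(\lambda_0+\pi^*\eta)$, for a suitable closed $1$-form $\eta$ on $Q$, to a primitive $\tilde\lambda_\eta$ of $\omega$ on $M\setminus Q$; the induced contact form on $\Sigma$ is then $((\delta^2-1)\alpha^\nabla+\pi^*\eta)|_{P_\delta}$ rather than a constant multiple of the Boothby--Wang form, and the identification of $(\Sigma,\ker\tilde\lambda_\eta|_\Sigma)$ with the prequantization bundle is obtained afterwards by Gray stability, before antisymmetrizing. So either you relax your matching requirement to $\Phi^*(\lambda_0+\pi^*\eta)$ --- which your Gysin computation does justify --- and then add the missing Gray-stability (or equivalent) step to recover the prequantization statement, or you must supply a genuinely new argument showing $\theta\in\operatorname{im}\bigl(H^1(W;\R)\to H^1(\Sigma;\R)\bigr)$, which fails to be available in the generality of Proposition \ref{complement}.
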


	\begin{proof}
Let $\Phi:\nu_M(Q)\to N^\epsilon Q$ be a symplectomorphism as in Lemma \ref{neighborhood of hyp}. By abusing of the notation, let $\nu_M(Q)=\Phi^{-1}(N^\delta Q)$ be a smaller $\rho$-invariant tubular neighborhood where $\delta<\epsilon$ is fixed. A compact manifold $W=M\setminus \nu_M(Q)$ with boundary $\Sigma=\Phi^{-1}(P_\delta)$ admits a Liouville form as follows. Let $\pi:N^\delta Q\to Q$ be the projection. By \cite[Lemma 2.1]{Op}, choose a closed 1-form $\eta$ on $Q$ such that a Liouville form $\lambda_\eta:=\Phi^*(\lambda_0+\pi^*\eta)$ defined on $\nu(Q)\setminus Q$ can be extended to a Liouville form $\tilde{\lambda}_\eta$ on $M\setminus Q$. One can check that $\tilde{\lambda}_\eta$ restricts to a contact form on the boundary $\Sigma$, and a contact manifold $(\Sigma,\ker \tilde{\lambda}_\eta|_\Sigma)$ is contactomorphic to a prequantization bundle  by the Gray's stability theorem. Since the tubular neighborhood $\nu_M(Q)$ is $\rho$-invariant, the involution $\rho\in \Diff(M)$ restricts to an anti-symplectic involution on $W$. Finally, the Liouville form $\lambda:=\frac{1}{2}(\tilde{\lambda}_\eta-\rho^*\tilde{\lambda}_\eta)$ on $W$ satisfies $\rho^*\lambda=-\lambda$ as desired.
	\end{proof}
	Note that an anti-symplectic involution $\rho\in \Diff(M)$ restricts to (exact) anti-symplectic involutions on $(W,\omega)$ and $(Q,\omega|_Q)$  and to an anti-contact involution on $(\Sigma,\alpha=\lambda|_\Sigma)$. We will write $\rho$ as an involution in various situation when it does not cause confusion. Furthermore, we always assume that the fixed points sets, $\Fix(\rho|_W)$, $\Fix(\rho|_\Sigma)$ and $\Fix(\rho|_Q)$, are all non-empty.		
\subsection{Setup}\label{comp_setup}
From now on, we assume the following setup.
\begin{itemize}
	\item Let $(M^{2n},\omega,\rho)$ be a closed, real, symplectic manifold such that $[\omega]\in H^2(M;\Z)$ is primitive and $n\ge 3$. 
	\item There exists $c\in \Z$ such that $c_1(M)=c[\omega]$, i.e., $(M,\omega)$ is monotone.
	\item Let $Q\subset M$ be a $\rho$-invariant symplectic hypersurface of degree $k$ which is simply connected.
	\item A symplectic hypersurface $Q$ is of Donaldson type, see \cite{Don}, \cite[Section 2.3]{Bir2} and \cite[Section 10]{CDvK}.
\end{itemize}
By Proposition \ref{complement}, the complement $(W=M\setminus \nu_M(Q),\omega|_W=d\lambda,\rho|_W)$ is a real Liouville domain whose boundary $(\Sigma,\xi)$ is a prequantization bundle equipped with the anti-contact involution. Since $Q$ is a symplectic Donaldson hypersurface, $(W,\lambda)$ is a \emph{Weinstein domain} by \cite[Proposition 11]{Gir}. In this paper, we will not define the notion of Weinstein domains, but we only use the following fact: Weinstein domains have the homotopy type of a CW-complex whose indices are at most $n=\frac{1}{2}\dim W$, see \cite[Lemma 11.13]{CE}. In Section \ref{comp_exam}, we provide some explicit examples satisfying the assumptions in the setup.
\begin{Rem}\label{remark_assumption} \
\begin{itemize}
	\item If $(M, \ow,J)$ is a compact K\"{a}hler manifold with $[\ow]\in H^2(M;\Z)$, then any smooth complex hypersurface in $M$ whose homology class is Poincar\'{e} dual to $k[\omega]$ for $k\in \N$, is a symplectic Donaldson hypersurface in a sense that the complement $W=M\setminus \nu_M(Q)$ is a Weinstein domain, see \cite[Section 7]{Bir1}.
	\item Since Weinstein domains have a handle decomposition whose indices are at most $n={1 \over 2} \dim W$, it follows that $\pi_i(W,\Sigma)=0$ for $1 \le i\le n-1$ and $H^{2n-2}(W;\Z)=0$ for $n\ge 3$. Moreover, the topological assumption in Theorem \ref{main thm}, i.e., $H^1_c(W;\R)\cong H_{2n-1}(W;\R)=0$, trivially holds for $n\ge 2$.
	\item  If $\pi_1(M)=0$, then $\pi_1(Q)=0$ by an symplectic analogue of the Lefschetz hyperplane theorem \cite[Proposition 39]{Don}.
	\item It is worth noting that for any large enough degree $k$ there are symplectic Donaldson hypersurfaces in an integral symplectic manifold. This is due to Donaldson \cite{Don}.
	\end{itemize}
\end{Rem}\noindent
We give the topological properties on the complement $(W,\ow)$ and its boundary $(\Sigma,\xi)$ needed to apply the spectral sequence in Theorem \ref{thm: ss}.
\begin{Lem}\label{comp_subtop}
A symplectic hypersurface $(Q,\omega|_Q)$ is monotone with $c_1(Q)=(c-k)[\omega|_Q]$ and $[\omega|_Q]\in H^2(Q)$ is primitive.
\end{Lem}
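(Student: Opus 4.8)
The plan is to deduce both assertions from the adjunction formula for a symplectic hypersurface together with the symplectic Lefschetz hyperplane theorem for Donaldson hypersurfaces. Throughout, write $i\colon Q\hookrightarrow M$ for the inclusion.

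First I would compute $c_1(Q)$. Since $Q$ is a symplectic submanifold of codimension two, the symplectic vector bundle $TM|_Q$ splits as $TQ\oplus NQ$, where $NQ$ is the symplectic normal bundle viewed as a complex line bundle; hence $i^*c_1(M)=c_1(Q)+c_1(NQ)$ in $H^2(Q;\Z)$. By the self-intersection formula, the Euler class of $NQ$ equals the restriction to $Q$ of the Poincar\'e dual of $[Q]$ in $M$, so the degree hypothesis gives $c_1(NQ)=k\,i^*[\omega]=k[\omega|_Q]$, as already recorded in Section~\ref{sec: chyper}. Combining this with the monotonicity $c_1(M)=c[\omega]$ yields $c_1(Q)=(c-k)[\omega|_Q]$; in particular $(Q,\omega|_Q)$ is monotone with the stated constant.

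It remains to prove that $[\omega|_Q]\in H^2(Q;\Z)$ is primitive. Since $Q$ is simply connected, $H^2(Q;\Z)\cong \mathrm{Hom}(H_2(Q;\Z),\Z)$ is torsion-free, and $[\omega|_Q]$ is nonzero there because $[\omega|_Q]^{n-1}$ is the volume class; thus primitivity of $[\omega|_Q]$ is equivalent to the existence of $B\in H_2(Q;\Z)$ with $\langle [\omega|_Q],B\rangle=1$. Because $[\omega]$ is primitive (and non-torsion, as $[\omega]^n\neq 0$), there is $A\in H_2(M;\Z)$ with $\langle[\omega],A\rangle=1$. Now I would invoke that $Q$ is a symplectic Donaldson hypersurface: by the symplectic Lefschetz hyperplane theorem \cite[Proposition~39]{Don}, the map $i_*\colon H_2(Q;\Z)\to H_2(M;\Z)$ is surjective for $n\ge 3$ (and an isomorphism for $n\ge 4$). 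Choosing $B\in H_2(Q;\Z)$ with $i_*B=A$, we obtain $\langle[\omega|_Q],B\rangle=\langle i^*[\omega],B\rangle=\langle[\omega],i_*B\rangle=1$, so $[\omega|_Q]$ is primitive.

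I expect the appeal to Donaldson's theorem in the last paragraph to be the only point requiring care, since it is what makes the $n=3$ case go through (for $n\ge 4$ the map $i_*$ is an isomorphism, and the same argument even shows that $[\omega|_Q]$ is primitive if and only if $[\omega]$ is). By contrast, the monotonicity assertion is immediate once the normal bundle computation from Section~\ref{sec: chyper} is in hand.
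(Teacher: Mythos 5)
Your proposal is correct, and the monotonicity half is the same as the paper's: split $TM|_Q\cong TQ\oplus NQ$, use $c_1(NQ)=k[\omega|_Q]$ and $c_1(M)=c[\omega]$ to get $c_1(Q)=(c-k)[\omega|_Q]$. For primitivity you take a genuinely different route to the key input, namely the surjectivity of $i_*\colon H_2(Q)\to H_2(M)$. You quote Donaldson's symplectic Lefschetz hyperplane theorem \cite[Proposition 39]{Don}, which indeed gives surjectivity in degree $n-1$ and isomorphisms below, hence surjectivity on $H_2$ for $n\ge 3$; the rest of your argument (lift a class $A$ with $\langle[\omega],A\rangle=1$ to $B\in H_2(Q)$ and pair) is exactly the paper's. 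The paper instead extracts surjectivity from the only consequence of the Donaldson-type hypothesis it has committed to using, namely that the complement $W=M\setminus\nu_M(Q)$ is Weinstein (\cite[Proposition 11]{Gir}), so $H^{2n-2}(W)=0$ for $n\ge 3$ by the handle-index bound \cite[Lemma 11.13]{CE}, and then $H_2(M,Q)\cong H_2(W,\partial W)\cong H^{2n-2}(W)=0$ by excision and Lefschetz duality, giving $i_*$ surjective from the long exact sequence of the pair. The trade-off: your appeal to \cite[Proposition 39]{Don} is literally a statement about hypersurfaces arising from Donaldson's construction (high degree, approximately holomorphic), so within the abstract setup of Section \ref{comp_setup} it is a slightly stronger input than the paper needs, whereas the paper's argument works verbatim whenever the complement is Weinstein -- which is how the examples in Section \ref{comp_exam} (complex hypersurfaces in K\"ahler manifolds of any degree, cf.\ Remark \ref{remark_assumption}) are justified; in those examples your route also goes through via the classical Lefschetz hyperplane theorem for ample divisors. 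Your additional observations (torsion-freeness of $H^2(Q;\Z)$ from simple connectedness, the iso for $n\ge 4$) are correct but not needed.
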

\begin{proof}
	Since $TM|_Q\cong TQ\oplus NQ$ and $c_1(NQ)=k[\omega|_Q]$, it follows that $c_1(Q)=c_1(TM|_Q)-c_1(NQ)=(c-k)[\omega|_Q]$. We can verify that $H_2(M,Q)\cong H_2(W,\partial W)\cong H^{2n-2}(W)=0$, see Remark \ref{remark_assumption}. Here, we used the excision to obtain the first isomorphism. By a long exact sequence of a pair $(M,Q)$, it implies that the map $i_*\colon H_2(Q) \to H_2(M)$ induced by the inclusion is surjective. Since $[\omega]\in H^2(M)$ is primitive, choose $[\sigma]\in H_2(M)$ such that $\langle [\omega],[\sigma]\rangle=1$. Take $[\sigma']\in H_2(Q)$ with $i_*[\sigma']=[\sigma]$. Then, we see that $\langle [\omega|_Q],[\sigma']\rangle=1$. This shows that $[\omega|_Q]\in H^2(Q)$ is primitive.
\end{proof}

\begin{Lem}\label{comp_top} We have that
\begin{enumerate}[label=(\arabic*)]
	\item $\pi_1(\Sigma)\cong \pi_1(W)\cong \Z_k$.
	\item $c_1(W)$ vanishes on $\pi_2(W)$.
	\item $c_1(\xi)$ vanishes on $\pi_2(\Sigma)$.
\end{enumerate}
\end{Lem}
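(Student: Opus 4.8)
The plan is to extract all three statements from the topology of the prequantization bundle $S^1 \hookrightarrow \Sigma \to Q$ together with the Weinstein handle decomposition of $W$. For (1), I would first use the Weinstein structure: since $W$ has a handle decomposition with handles of index at most $n$, the inclusion $\Sigma \hookrightarrow W$ is $(n-1)$-connected, so in particular (as $n \geq 3$) it induces an isomorphism on $\pi_1$; this gives $\pi_1(\Sigma) \cong \pi_1(W)$. To identify this group with $\Z_k$, I would use the Gysin/homotopy long exact sequence of the circle bundle $\Sigma \to Q$. Since $Q$ is simply connected (part of the setup), the fibration sequence gives an exact sequence $\pi_1(S^1) \to \pi_1(\Sigma) \to \pi_1(Q) = 0$, so $\pi_1(\Sigma)$ is cyclic, generated by the fiber class; the order of the fiber is read off from the Euler class $e = c_1(NQ) = k[\omega|_Q] \in H^2(Q)$, which is $k$ times the primitive class $[\omega|_Q]$ (by Lemma \ref{comp_subtop}). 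Concretely, the boundary map $\pi_2(Q) \to \pi_1(S^1) = \Z$ in the long exact sequence is given by pairing with $e$, and since $[\omega|_Q]$ is primitive there is a spherical class on which it evaluates to $1$, so the image of this boundary map is exactly $k\Z$; hence $\pi_1(\Sigma) \cong \Z/k\Z = \Z_k$.

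For (2), I want to show $c_1(W)$ vanishes on $\pi_2(W)$, equivalently on the image of the Hurewicz map in $H_2(W;\Z)$. Because $\Sigma \hookrightarrow W$ is $(n-1)$-connected and $n \geq 3$, it is in particular $2$-connected enough to give a surjection $\pi_2(\Sigma) \to \pi_2(W)$; so it suffices to show that $c_1(TM)|_\Sigma$, pulled back to $\Sigma$, vanishes on spheres. Alternatively, and perhaps more cleanly, I would work with $H_2$: from the excision computation in the proof of Lemma \ref{comp_subtop} we have $H_2(M,Q) = 0$, so $i_* : H_2(Q) \to H_2(M)$ is surjective, and similarly the inclusion $\Sigma \to W \to M$ controls $H_2$. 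Using monotonicity $c_1(M) = c[\omega]$ and the fact that $[\omega]$ restricted to $W = M \setminus \nu_M(Q)$ is exact (it equals $d\lambda$, by Proposition \ref{complement}), the class $c_1(W) = c_1(TM)|_W = c[\omega]|_W$ is exact as well, hence vanishes on all of $H_2(W)$ — in particular on $\pi_2(W)$. This is the quickest route and avoids any delicate fibration bookkeeping.

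For (3), the contact structure $\xi = \ker \alpha^\nabla$ on $\Sigma$ fits into $T\Sigma = \xi \oplus \R\langle R_\alpha\rangle$ and, as the vertical tangent bundle of the circle bundle is trivial, $\xi \cong \pi^* TQ$ as complex vector bundles (where $\pi : \Sigma \to Q$ is the bundle projection). Hence $c_1(\xi) = \pi^* c_1(Q) = \pi^*\big((c-k)[\omega|_Q]\big)$ by Lemma \ref{comp_subtop}. But $\pi^*[\omega|_Q]$ vanishes on $\pi_2(\Sigma)$: for any sphere $u : S^2 \to \Sigma$, the composition $\pi \circ u : S^2 \to Q$ satisfies $\langle \pi^*[\omega|_Q], [u]\rangle = \langle [\omega|_Q], [\pi \circ u]\rangle$, and this vanishes because $[\omega|_Q] = \frac{1}{k} e$ where $e$ is the Euler class of the bundle $\Sigma \to Q$, and the Euler class always pulls back to zero on the total space of a sphere bundle's base composed with a section-free lift — more precisely, $\langle e, [\pi\circ u]\rangle$ equals the obstruction to lifting, which here is zero since $u$ itself is a lift; equivalently the Gysin sequence shows $\pi^* : H^2(Q) \to H^2(\Sigma)$ has kernel generated by $e$, and the relevant evaluations factor through $H^2(\Sigma)/(\text{torsion})$. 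So $c_1(\xi)$ evaluates to a multiple of $\langle e, [\pi \circ u]\rangle = 0$ on every spherical class.

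The main obstacle I anticipate is being careful about \emph{which} statements are needed on the nose versus up to torsion, and getting the direction of the Gysin/homotopy exact sequence arguments exactly right — in particular pinning down that the fiber of $\Sigma \to Q$ has order exactly $k$ in $\pi_1$, which requires knowing $[\omega|_Q]$ is a \emph{primitive} integral class (supplied by Lemma \ref{comp_subtop}) and not merely that $e = k[\omega|_Q]$. The vanishing statements (2) and (3) are comparatively soft once one exploits that $[\omega]|_W$ is exact and that $\xi$ pulls back from $Q$; the connectivity input from the Weinstein handle decomposition (handles of index $\leq n$, hence $\Sigma \hookrightarrow W$ is $(n-1)$-connected, $n \geq 3$) is what makes $\pi_1$ and $\pi_2$ of $W$ accessible through $\Sigma$ and $Q$.
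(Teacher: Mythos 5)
Your proposal is correct, and for parts (1) and (2) it runs essentially along the paper's lines: the paper also gets $\pi_1(\Sigma)\cong\pi_1(W)$ from the Weinstein handle decomposition (Remark \ref{remark_assumption}), and proves (2) exactly as you do, from monotonicity $c_1(M)=c[\omega]$ together with exactness of $\omega|_W$. For the identification $\pi_1(\Sigma)\cong\Z_k$ the paper argues homologically: the homotopy exact sequence is used only to see that $\pi_1(\Sigma)$ is abelian, and then the homological Gysin sequence $H_2(Q)\xrightarrow{\cap\, k[\omega|_Q]}H_0(Q)\to H_1(\Sigma)\to H_1(Q)=0$ with $[\omega|_Q]$ primitive gives $H_1(\Sigma)\cong\Z_k$; you instead read the order of the fiber off the boundary map $\pi_2(Q)\to\pi_1(S^1)$, which is fine, but note the small extra step your route needs: primitivity gives a class in $H_2(Q;\Z)$ pairing to $1$ with $[\omega|_Q]$, and to feed it into the homotopy sequence you must know it is spherical, which follows from Hurewicz since $Q$ is simply connected --- the paper's homological Gysin argument sidesteps this. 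For (3) you take a genuinely different route: the paper observes $TW|_\Sigma\cong\xi\oplus\langle R_\alpha,X\rangle$, so $c_1(\xi)=c_1(TW|_\Sigma)$ and (3) reduces to (2), whereas you use $\xi\cong\pi^*TQ$, so $c_1(\xi)=(c-k)\pi^*[\omega|_Q]$, together with $k\,\pi^*[\omega|_Q]=\pi^*e=0$ from the Gysin sequence, so $\pi^*[\omega|_Q]$ is torsion and pairs to zero with every class in $H_2(\Sigma;\Z)$; equivalently $\langle e,\pi_*[u]\rangle=0$ because the pullback bundle $(\pi\circ u)^*\Sigma\to S^2$ carries the tautological section given by $u$. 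Both arguments are sound; yours has the mild advantage of not passing through $W$ at all (and gives vanishing of $c_1(\xi)$ on all of $H_2(\Sigma;\Z)$, not just spherical classes), while the paper's is a one-line reduction to (2). The only thing to tighten is your sentence about the Euler class pulling back to zero ``on the total space of a sphere bundle's base composed with a section-free lift,'' which is garbled as written; the precise statement you give right after (obstruction to lifting vanishes since $u$ is itself a lift) is the correct one.
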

\begin{proof} The first is already known in \cite[Remark 4.17]{CDvK}. Here, we give the details. The homotopy exact sequence of a circle bundle $\Sigma\to Q$ implies that $\pi_1(\Sigma)$ is abelian, so $\pi_1(\Sigma)\cong H_1(\Sigma)$. Recall that $[\omega|_Q]$ is primitive by Lemma \ref{comp_subtop}. The relevant part of the (homological) Gysin sequence,
$$\begin{CD}
H_2(Q)@>\cap k[\omega|_Q]>>H_0(Q)\cong \mathbb{Z}@>>>H_1(\Sigma)@>>>H_1(Q)\cong 0
\end{CD}$$
tells us that $\pi_1(\Sigma)\cong H_1(\Sigma)\cong \mathbb{Z}_k$. By Remark \ref{remark_assumption}, we get $\pi_1(\Sigma)\cong \pi_1(W)$. Since $\omega|_W$ is exact, we verify that $\langle c_1(W),[\sigma] \rangle=\langle c[\omega|_W],[\sigma] \rangle=0$ for $[\sigma]\in \pi_2(W)$. Note that $TW|_\Sigma\cong \xi\oplus\langle R_\alpha, X \rangle$, where $R_\alpha$ is a Reeb vector field and $X$ is the Liouville vector field associated to $\lambda$. Hence, we obtain $c_1(TW|_\Sigma)=c_1(\xi)$ and that $c_1(\xi)$ vanishes on $\pi_2(\Sigma)$.
\end{proof}
To apply Theorem \ref{main thm}, we will consider an admissible Lagrangian ball  $L\cong B^n$ in $W$. In later examples, some connected component of a real Lagrangian in $W$ will provide such a Lagrangian, see Lemma \ref{comp_ex1top} and \ref{comp_ex2top}. The following is an immediate consequence by using appropriate relative exact sequences.
\begin{Lem}\label{lem_topofcomp}
 Let $L\subset W$ be an admissible Lagrangian with a Legendrian boundary $\mathcal{L}=\p L$ such that $\pi_1(L)=\pi_1(\mathcal{L})=0$. Then, we have $\pi_1(\Sigma,\mathcal{L})\cong \pi_1(W,L)\cong \Z_k$ and the map $i_*:\pi_1(\Sigma,\mathcal{L})\to \pi_1(W,L)$ induced by the inclusion is bijective.
\end{Lem}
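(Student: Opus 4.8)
The plan is to read the statement off the homotopy long exact sequences of the pairs $(\Sigma,\mathcal{L})$ and $(W,L)$, using the computation $\pi_1(\Sigma)\cong\pi_1(W)\cong\Z_k$ already established in Lemma \ref{comp_top}. The first point I would make precise is that this isomorphism is in fact \emph{induced by the inclusion} $\Sigma\hookrightarrow W$: since $(W,\lambda)$ is Weinstein, Remark \ref{remark_assumption} gives $\pi_i(W,\Sigma)=0$ for $1\le i\le n-1$, and as $n\ge 3$ this covers $i=1,2$, so the relevant piece of the homotopy exact sequence of $(W,\Sigma)$ reads $0\to\pi_1(\Sigma)\to\pi_1(W)\to 0$; hence the inclusion-induced map $\iota_*\colon\pi_1(\Sigma)\to\pi_1(W)$ is a bijection.

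Next I would treat the two pairs one at a time. Because $\mathcal{L}$ is path-connected and simply connected, the segment $\pi_1(\mathcal{L})\to\pi_1(\Sigma)\to\pi_1(\Sigma,\mathcal{L})\to\pi_0(\mathcal{L})$ of the homotopy exact sequence of $(\Sigma,\mathcal{L})$ has trivial outer terms, so the natural map $j_\Sigma\colon\pi_1(\Sigma)\to\pi_1(\Sigma,\mathcal{L})$ is a bijection of pointed sets, giving $\pi_1(\Sigma,\mathcal{L})\cong\Z_k$. Running the identical argument for $(W,L)$, using $\pi_1(L)=0$ and $L$ path-connected, produces a bijection $j_W\colon\pi_1(W)\to\pi_1(W,L)$ and hence $\pi_1(W,L)\cong\Z_k$. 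One should bear in mind here that $\pi_1$ of a pair is only a pointed set, so these are exact sequences of pointed sets and ``bijection'' replaces ``isomorphism.''

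Finally I would assemble the square
\[
\begin{array}{ccc}
\pi_1(\Sigma) & \xrightarrow{\ j_\Sigma\ } & \pi_1(\Sigma,\mathcal{L})\\
\big\downarrow\iota_* & & \big\downarrow i_*\\
\pi_1(W) & \xrightarrow{\ j_W\ } & \pi_1(W,L),
\end{array}
\]
which commutes by naturality of the homotopy exact sequence with respect to the map of pairs $(\Sigma,\mathcal{L})\hookrightarrow(W,L)$. The two horizontal arrows and the left vertical arrow $\iota_*$ are bijections by the preceding two paragraphs, so $i_*$ is forced to be a bijection as well, and under these identifications it is the identity $\Z_k\to\Z_k$. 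I do not expect any real obstacle: the argument is elementary algebraic topology, and the only two places requiring care are ensuring that the isomorphism $\pi_1(\Sigma)\cong\pi_1(W)$ is the inclusion-induced one — which is precisely what the vanishing of the relative homotopy groups of $(W,\Sigma)$ supplies — and handling $\pi_1$ of a pair correctly as a pointed set.
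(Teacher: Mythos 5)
Your argument is correct and is exactly the route the paper intends (it states the lemma as an ``immediate consequence by using appropriate relative exact sequences''): the homotopy exact sequences of $(\Sigma,\mathcal{L})$ and $(W,L)$ with $\pi_1(\mathcal{L})=\pi_1(L)=0$, the vanishing of $\pi_i(W,\Sigma)$ for $i\le n-1$ from the Weinstein property, and naturality of the square. Your care with the pointed-set issue (fibers of $\pi_1(X)\to\pi_1(X,A)$ being cosets of the image of $\pi_1(A)$) is exactly the right way to justify injectivity, so nothing is missing.
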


\subsection{Reeb chords on a prequantization bundle}
We shall exhibit Reeb chords on the prequantization bundle $(\Sigma,\alpha)$  over $(Q,k\ow|_Q,\rho|_Q)$ obtained in Section \ref{comp_setup}. Recall that $(\Sigma,\alpha)$ carries an anti-contact involution $\rho\in \Diff(\Sigma)$ whose fixed point set $\Fix(\rho)$ consists of (possibly) several connected components. Assume that the real Lagrangian $\Fix(\rho|_Q)$ in $Q$ is path-connected. Our examples in Section \ref{comp_exam} will fulfill this condition. The Legendrian $\Fix(\rho)$ has at most two components as the natural projection $\pi:\Fix(\rho)\to \Fix(\rho|_Q)$ is a 2-fold covering. In this section, we consider the following two cases: either 
\begin{enumerate}[label=\bf{(C\arabic*)}]
	\item \label{c1} $\Fix(\rho)$ is path-connected, or
	\item \label{c2} $\Fix(\rho)$ consists of two connected components.
\end{enumerate}
In both cases, we let $\mathcal{L}$ be a Legendrian in $\Sigma$ given by a connected component of $\Fix(\rho)$.
\begin{Lem}\label{preq_morsebott}
Reeb chords of $\mathcal{L}$ are of Morse-Bott type. More precisely, we have
	\begin{enumerate}[label=(\arabic*)]
		\item $\text{Spec}(\Sigma,\alpha,\mathcal{L})=T_0\cdot \N$ for some $T_0>0$.
		\item For $T\in \text{Spec}(\Sigma,\alpha,\mathcal{L})$, a Morse-Bott component $\mathcal{L}_T$ is diffeomorphic to $\mathcal{L}$.
	\end{enumerate}	
\end{Lem}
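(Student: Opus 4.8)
The plan is to deduce that Reeb chords of $\mathcal{L}$ are of Morse-Bott type from the criterion already established (Corollary \ref{cor: mbtype}), and then to read off both the spectrum and the components from the circle-bundle geometry of the prequantization bundle.

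First I would recall that the Reeb flow on the prequantization bundle $(\Sigma,\alpha)$ over $(Q,k\omega|_Q)$ is the fiberwise circle action and is globally periodic: $Fl_P^{R_\alpha}=\id$, where $P>0$ is the common period of the fibers, which are exactly the simple periodic Reeb orbits. Hence $\Spec(\Sigma,\alpha)=P\cdot\N$ is discrete and $\mathcal{N}_{\ell P}=\Sigma$ for every $\ell\in\N$, so $\alpha$ is of Morse-Bott type in the sense of Definition \ref{def: mbreebchord}. Since every Reeb chord of $\mathcal{L}$ reflects to a symmetric periodic Reeb orbit of twice the period (Section \ref{sec: halfreeb}), one has $\Spec(\Sigma,\alpha,\mathcal{L})\subset\tfrac12\Spec(\Sigma,\alpha)=\tfrac{P}{2}\cdot\N$; in particular it is discrete, and $2T\in P\cdot\N$, so $\mathcal{N}_{2T}=\Sigma$, for every $T\in\Spec(\Sigma,\alpha,\mathcal{L})$. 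As $\mathcal{N}_{2T}=\Sigma$ trivially intersects $\mathcal{L}$ cleanly along $\mathcal{L}$, the argument of Corollary \ref{cor: mbtype} (which applies verbatim with $\mathcal{L}$ a connected component of $\Fix(\rho)$, using $T_z\mathcal{L}=\ker(T_z\rho-\id)$ at $z\in\mathcal{L}$) yields that Reeb chords of $\mathcal{L}$ are of Morse-Bott type, and by \eqref{eq: mbc} the Morse-Bott component is $\mathcal{L}_T=\mathcal{N}_{2T}\cap\mathcal{L}=\mathcal{L}$.

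Next I would pin down $T_0$ using the structure of $\Fix(\rho)$ along a single fiber. Since $\pi\colon\Fix(\rho)\to\Fix(\rho|_Q)$ is a $2$-fold covering, over each $x\in\Fix(\rho|_Q)$ the set $\Fix(\rho)\cap S^1_x$ consists of two points, and by \eqref{eq: anticont} these are of the form $z$ and $Fl_{P/2}^{R_\alpha}(z)$ (the fiber being a simple orbit of period $P$). In case \ref{c1}, $\mathcal{L}=\Fix(\rho)$ contains both points, so the shortest Reeb chord of $\mathcal{L}$ traverses a half-fiber; iterating shows $\Spec(\Sigma,\alpha,\mathcal{L})=\tfrac{P}{2}\cdot\N$, i.e.\ $T_0=P/2$. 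In case \ref{c2}, the double cover $\pi$ is trivial because its total space is disconnected over the path-connected base $\Fix(\rho|_Q)$, so exactly one of $z$, $Fl_{P/2}^{R_\alpha}(z)$ lies in $\mathcal{L}$; a chord from $\mathcal{L}$ back to $\mathcal{L}$ must then wind once around a full fiber, giving $\Spec(\Sigma,\alpha,\mathcal{L})=P\cdot\N$ and $T_0=P$. Either way $\Spec(\Sigma,\alpha,\mathcal{L})=T_0\cdot\N$, proving (1), and the identification $\mathcal{L}_T=\mathcal{L}$ from the previous paragraph gives (2).

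The hard part is not computational but bookkeeping: one must keep careful track of which connected component of $\Fix(\rho)$ is $\mathcal{L}$, so that a Reeb chord leaving $\mathcal{L}$ actually returns to $\mathcal{L}$ (and not to the other component in case \ref{c2}) and so that $\mathcal{L}_T$ is $\mathcal{L}$ rather than all of $\Fix(\rho)$. The triviality of the covering $\pi$ over the path-connected locus $\Fix(\rho|_Q)$ is precisely what resolves this; the rest is a direct combination of the Morse-Bott criterion with the standard description of prequantization bundles.
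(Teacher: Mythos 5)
Your proof is correct and follows essentially the same route as the paper: the identity $\rho\circ Fl_t^{R_\alpha}\circ\rho=Fl_{-t}^{R_\alpha}$ forces $Fl_{2T}^{R_\alpha}(z)=z$ so that $\Spec(\Sigma,\alpha,\mathcal{L})\subset\tfrac{T_P}{2}\N$, and then periodicity of the prequantization flow together with \eqref{eq: mbc} and Corollary \ref{cor: mbtype} gives the Morse-Bott property and $\mathcal{L}_T\cong\mathcal{L}$. Your fiberwise analysis of the two fixed points $z$ and $Fl_{T_P/2}^{R_\alpha}(z)$ on each fiber, and the triviality of the disconnected double cover in case \ref{c2}, simply makes explicit the case distinction $T_0=T_P/2$ versus $T_0=T_P$ that the paper leaves implicit.
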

\begin{proof}
We denote by $T_P$ the minimal period of the Reeb flow on $(\Sigma,\alpha)$. Let $z\in \mathcal{L}$ such that $Fl_T^{R_\alpha}(z)\in \mathcal{L}$ for $T>0$, i.e., $\rho(Fl_T^{R_\alpha}(z))=Fl_T^{R_\alpha}(z)$. Then we verify that
$$
z=\rho(z)=\rho\circ Fl_{-T}^{R_\alpha}\circ Fl_T^{R_\alpha}(z)= Fl_T^{R_\alpha}\circ \rho \circ Fl_T^{R_\alpha}(z)=Fl_{2T}^{R_\alpha}(z),
$$
which implies that $2T$ is a multiple of $T_p$. Note that
$$
\Spec(\Sigma,\alpha,\mathcal{L})=T_0\cdot \N,
$$
where $T_0=\begin{cases}
	\frac{1}{2}T_P & \text{for \ref{c1}} \\
	T_P &\text{for \ref{c2}}
\end{cases}$ is given by a period of a Reeb chord of the minimal period. In both cases, every Morse-Bott component $\mathcal{L}_T$ is diffeomorphic to $\mathcal{L}$ by \eqref{eq: mbc} and it follows from Corollary \ref{cor: mbtype} that Reeb chords are Morse-Bott type.
\end{proof}
\begin{Rem}
Every Reeb chord is an iterate of a Reeb chord $c_P:[0,T_0]\to \Sigma$ of the minimal period $T_0$, and $c_P$ is given by
\begin{itemize}
	\item a half Reeb chord of a simple periodic Reeb orbit starting at a point in $\mathcal{L}$ for the case \ref{c1}, and
	\item a simple periodic Reeb orbit starting at a point in $\mathcal{L}$ for the case \ref{c2}. 
\end{itemize}	
\end{Rem}
We denote by $c_P^l$ a $l$-th iterate of $c_P$ for $l\in \N$. Write $\gamma_P:[0,T_P]\to \Sigma$ for a simple periodic Reeb orbit starting at a point in $\mathcal{L}$ and its $l$-fold cover is denoted by $\gamma_P^l$.
  The following lemma characterizes contractible Reeb chords.
\begin{Lem}\label{preq_cont}
Suppose that $\pi_1(\mathcal{L})=0$. A Reeb chord $c_P^l$ is contractible if and only if $k$ divides $l$.
	\end{Lem}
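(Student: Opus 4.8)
The plan is to translate the statement into the relative homology group $H_1(\Sigma,\mathcal{L};\Z)$, where it reduces to a short computation of a class in $\Z_k$.

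First I would set up the homology reduction. Since $\pi_1(\mathcal{L})=0$ and $\mathcal{L},\Sigma$ are connected, the homotopy exact sequence of the pair $(\Sigma,\mathcal{L})$ yields a bijection of pointed sets $\pi_1(\Sigma)\cong\pi_1(\Sigma,\mathcal{L})$; feeding this and the relative Hurewicz maps into the homology exact sequence of $(\Sigma,\mathcal{L})$ — which, using $H_1(\mathcal{L})=0$ and injectivity of $H_0(\mathcal{L})\to H_0(\Sigma)$, gives $H_1(\Sigma;\Z)\cong H_1(\Sigma,\mathcal{L};\Z)$ — produces a commutative diagram identifying $\pi_1(\Sigma,\mathcal{L})$ with $H_1(\Sigma,\mathcal{L};\Z)$. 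By Lemma \ref{comp_top} and the Gysin sequence appearing in its proof, $H_1(\Sigma;\Z)\cong\Z_k$ is generated by the fiber class $[\gamma_P]$. Hence a Reeb chord $c$ is contractible if and only if its relative class $[c]\in H_1(\Sigma,\mathcal{L};\Z)\cong\Z_k$ vanishes, and it remains to compute $[c_P^l]$.

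In case \ref{c2} this is immediate: $c_P=\gamma_P$ is a simple fiber, so $[c_P^l]=l[\gamma_P]$, and $[\gamma_P]$ generates $\Z_k$. The real work is case \ref{c1}, where $c_P\colon[0,T_P/2]\to\Sigma$ is only the first half of the fiber, with $p:=c_P(0)\in\mathcal{L}$ and $c_P(T_P/2)=Fl_{T_P/2}^{R_\alpha}(p)\in\Fix(\rho)=\mathcal{L}$. I would introduce the complementary half $c_P'(t):=Fl_{T_P/2+t}^{R_\alpha}(p)$, $t\in[0,T_P/2]$, so that $c_P\#c_P'=\gamma_P$ and therefore $[c_P]+[c_P']=[\gamma_P]$ in $H_1(\Sigma,\mathcal{L};\Z)$; moreover $c_P^l$ is, up to reparametrization, the concatenation of $\lceil l/2\rceil$ copies of $c_P$ alternating with $\lfloor l/2\rfloor$ copies of $c_P'$. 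The symmetry input is that $\rho|_{\mathcal{L}}=\id$ and the identity \eqref{eq: anticont} force $\rho\circ c_P'=\overline{c_P}$, the reverse path of $c_P$, hence $\rho_*[c_P']=-[c_P]$; on the other hand $\rho$ reverses the fiber, so $\rho_*[\gamma_P]=-[\gamma_P]$ and therefore $\rho_*=-\id$ on $H_1(\Sigma,\mathcal{L};\Z)=\Z_k$. Combining these gives $[c_P']=[c_P]$, so $2[c_P]=[\gamma_P]$; since $[\gamma_P]$ generates $\Z_k$, so does $[c_P]$, which thus has order exactly $k$, and $[c_P^l]=l[c_P]$. Therefore $[c_P^l]=0$ if and only if $k\mid l$.

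The hard part is the last computation in case \ref{c1}: that $[c_P]$ has order exactly $k$, equivalently that $2[c_P]=[\gamma_P]$ — this is precisely where the hypotheses of case \ref{c1} (the two $\rho$-fixed points on a fiber lying in the same component $\mathcal{L}$, together with $\rho$ reversing the Reeb flow) are essential, and it is what distinguishes the answer $k\mid l$ from a naive $2k\mid l$. A secondary technical point is that $\pi_1(\Sigma,\mathcal{L})$ is a priori only a pointed set, so the identifications in the first step should be spelled out, but they are routine once $\pi_1(\mathcal{L})=0$ is known.
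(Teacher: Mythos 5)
Your argument is correct, and it shares the paper's overall skeleton: reduce contractibility to the vanishing of the relative class in $H_1(\Sigma,\mathcal{L};\Z)\cong\Z_k$ (you spell out the $\pi_1$-versus-$H_1$ identification more carefully than the paper does, which only quotes Lemma \ref{lem_topofcomp}), handle \ref{c2} by observing $c_P=\gamma_P$, and in \ref{c1} boil everything down to the relation $2[c_P]=[\gamma_P]$. Where you diverge is in how that key relation is obtained and how it is exploited. The paper gets $[c_P']=[c_P]$ geometrically: since in \ref{c1} the whole fixed set is the path-connected $\mathcal{L}$, any two half chords of simple orbits starting on $\mathcal{L}$ can be homotoped into one another through half chords with endpoints on $\mathcal{L}$, so in particular the second half of $\gamma_P$ is homologous to $c_P$. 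You instead use the involution algebraically: $\rho\circ c_P'=\overline{c_P}$ via \eqref{eq: anticont}, together with $\rho_*[\gamma_P]=-[\gamma_P]$ and hence $\rho_*=-\id$ on $H_1(\Sigma,\mathcal{L})$, forcing $[c_P']=[c_P]$. Both justifications use exactly the structure of case \ref{c1} ($\mathcal{L}=\Fix(\rho)$ connected, flow reversed by $\rho$); yours trades an explicit homotopy for a computation of $\rho_*$, and is perhaps slightly more formal but equally valid. The endgame also differs: the paper first deduces that $k$ must be odd and then passes from $2l[c_P]=0$ to $l[c_P]=0$, whereas you note directly that if $2[c_P]$ generates $\Z_k$ then so does $[c_P]$, so it has order exactly $k$ — a cleaner finish that subsumes (and implies) the paper's parity observation, though you lose the explicit remark that \ref{c1} can only occur for odd $k$.
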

	
	\begin{proof}
	Assume the case \ref{c1}. We observe that any two half Reeb chords of simple periodic Reeb orbits starting at (different) points in $\mathcal{L}$ are homologous as $\mathcal{L}$ is path-connected. Since the canonical map  $j_*:H_1(\Sigma)\to H_1(\Sigma,\mathcal{L})$ is an isomorphism given by $[\gamma_P] \mapsto 2[c_P]$ and $H_1(\Sigma)\cong H_1(\Sigma,\mathcal{L}) \cong \mathbb{Z}_k$ by Lemma \ref{lem_topofcomp}, we observe that $k$ should be odd. Indeed, if $k=2m$ is even we get the contradiction $0\ne j_*(m[\gamma_P])=2m[c_P]=k[c_P]=0$. We now see that
$$2l[c_P]=[c_P^{2l}]=[\gamma_P^l]
		=l[\gamma_P]=0 \quad \iff \quad \text{$k \mid l$}.$$
	Since $k$ is odd, the condition $2l[c_P]=0$ is equivalent to the condition $l[c_P]=[c_P^l]=0$. 
	
Now, consider the case \ref{c2}. Since $c_P^l$ is given by a $l$-fold cover of a simple periodic Reeb orbit starting at a point in $\mathcal{L}$, the result follows from Lemma \ref{comp_top}.
	\end{proof}

\subsection{Computation of the wrapped Floer homology}
We are ready to apply the Morse-Bott spectral sequence (Theorem \ref{thm: ss}) in our situation. Recall that the complement $(W^{2n},\ow,\rho)$ is the real Liouville domain satisfying the setup in Section \ref{comp_setup} and its boundary $(\Sigma,\alpha)$ is a prequantization bundle equipped with the anti-contact involution $\rho|_\Sigma\in \Diff(\Sigma)$. We remind that $k\in \N$ is the degree of a $\rho$-invariant symplectic hypersurface in a real symplectic manifold $(M,\ow,\rho)$ and $c_1(M)=c[\ow]$ for some $c\in \N$.

\begin{prop}\label{comp_prop} 
Let $n\ge 3$. We assume that one of the following holds.
\begin{enumerate}[label=(\arabic*)]
\item The real Lagrangian $\Fix(\rho)$ in $W$ is diffeomorphic to the ball $B^n$, and either $c-k>n$ or $c-k<2-n$ holds. 
\item The real Lagrangian $\Fix(\rho)$ in $W$ consists of two connected components such that at least one component is diffeomorphic to the ball $B^n$, and either $2(c-k)>n$ or $2(c-k)<2-n$ holds.
\end{enumerate}
Let $L$ be the connected component of $\Fix(\rho)$ diffeomorphic to the ball $B^n$. Then, the wrapped Floer homology group of the Lagrangian $L$ in $W$ has a linear growth, and is given by
$$
\WFH_*(L;W) = \begin{cases} \Z_2 & *=0, \mu_P\cdot l -n +1, \mu_P\cdot l; l\in \N ; \\ 0 & \text{otherwise},  \end{cases}
$$
where $\mu_P=\begin{cases}
	c-k & \text{for \ref{c1},} \\
	2(c-k) & \text{for \ref{c2}}.
\end{cases}$
\end{prop}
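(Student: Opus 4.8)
The plan is to feed the prequantization-bundle setup directly into the Morse-Bott spectral sequence of Theorem~\ref{thm: ss}, using the structural facts already assembled: Reeb chords are of Morse-Bott type with $\Spec_0(\Sigma,\alpha,\mathcal{L}) = k T_0 \cdot \N$ (by Lemma~\ref{preq_morsebott} and Lemma~\ref{preq_cont}), every contractible Morse-Bott component is diffeomorphic to $\mathcal{L} \cong S^{n-1}$, and the topological hypotheses \ref{ms1}--\ref{ms3} hold by Lemma~\ref{comp_top} and Lemma~\ref{lem_topofcomp}. So the first page $E^1_{pq}$ is, for $p>0$, a copy of $H_*(S^{n-1};\Z_2)$ placed according to the shift $\shift(N\cdot\mathcal{L}) = \mu(N\cdot\mathcal{L}) - \frac12(n-2)$, together with the $p=0$ column contributing $H_*(L,\mathcal{L};\Z_2) = H_*(B^n, S^{n-1};\Z_2)$, which is a single $\Z_2$ in degree $n$ (i.e.\ one $\Z_2$ in wrapped degree $0$).

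First I would compute the Maslov indices $\mu(N\cdot\mathcal{L})$ of the iterated Morse-Bott components. Since the $l$-th contractible Reeb chord $c_P^{kl}$ is a half (in case \ref{c1}) or full (in case \ref{c2}) cover of the appropriate periodic Reeb orbit, I would apply Proposition~\ref{lem: indrel} together with the standard index formula for iterated fibre orbits on a prequantization bundle over the monotone $(Q, k\omega|_Q)$ with $c_1(Q) = (c-k)[\omega|_Q]$ (Lemma~\ref{comp_subtop}); the index of the simple fibre orbit contributes via $c_1(NQ) = k[\omega|_Q]$ and the base contributes via $c-k$, giving $\mu(N\cdot\mathcal{L}) = \mu_P \cdot N$ with $\mu_P = c-k$ in case \ref{c1} and $\mu_P = 2(c-k)$ in case \ref{c2}. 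Then I would place the two generators of each $H_*(N\cdot\mathcal{L})$ on the $E^1$-page: a direct computation (mirroring Lemma~\ref{lem: ssterm}) shows the bottom generator of $N\cdot\mathcal{L}$ sits in wrapped degree $\mu_P N - n + 1$ and the top generator in wrapped degree $\mu_P N$.

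Next I would show the spectral sequence degenerates at $E^1$. The key inequality is that the top generator of the $N$-th component lies strictly below the bottom generator of the $(N+1)$-st component: $\mu_P N < \mu_P(N+1) - n + 1$, i.e.\ $\mu_P > n-1$; this is exactly where the hypothesis $c-k > n$ (resp.\ $2(c-k) > n$) enters, while the alternative hypothesis $c-k < 2-n$ (resp.\ $2(c-k) < 2-n$) handles the negatively-graded case, where the components march off in the other direction and again no differential can connect distinct $p$-columns for degree reasons; one also checks the $p=0$ column (a single class in degree $0$) cannot be hit or hit anything. Hence $\WFH_*(L;W) \cong E^1_{**}$ as a group, yielding the stated formula with one $\Z_2$ in degree $0$, one in each degree $\mu_P l - n + 1$, and one in each degree $\mu_P l$ for $l \in \N$. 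Finally, since $\dim \WFH^{<c}_*(L;W)$ counts exactly the generators of action below $c$, and the periods grow linearly ($T_l = k T_0 l$) while each period contributes exactly two generators, the limit in \eqref{eq: growth} equals $2/(kT_0) > 0$, so the homology has a linear growth.

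The main obstacle I expect is the index computation: getting $\mu(N\cdot\mathcal{L}) = \mu_P N$ exactly right, with the correct interplay between Proposition~\ref{lem: indrel} (the factor-of-two between a periodic orbit and its half chord), the Robbin--Salamon index of an iterated fibre orbit on a Boothby--Wang bundle, and the monotonicity constant $c-k$ of the base. The contractibility bookkeeping is also delicate — only the chords $c_P^{kl}$ are contractible (Lemma~\ref{preq_cont}), so the relevant period is $kT_0$, not $T_0$, and the index of $c_P^{kl}$ must be computed as the index of a $(kl)$-fold iterate with a possibly nontrivial capping disk, using condition \ref{ms3} to reduce to the Legendrian-level computation in the style of \cite[Lemma 3.4]{BO}. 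Once these indices are pinned down, the degeneration and the growth statement are routine bookkeeping on the $E^1$-page exactly as in the $A_k$ case.
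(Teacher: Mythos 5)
Your proposal is correct and follows essentially the same route as the paper's proof: the Morse--Bott spectral sequence of Theorem \ref{thm: ss} fed with Lemmas \ref{preq_morsebott}, \ref{preq_cont} and \ref{lem_topofcomp}, the index computation $\mu(N\cdot\mathcal{L})=\mu_P\, N$ obtained from Proposition \ref{lem: indrel} together with the known index $\mu(\gamma_P^k)=2(c-k)$ of the smallest contractible iterate of the fibre orbit, degeneration at the $E^1$-page as in Lemma \ref{lem: ssterm}, and linear growth from the linear growth of the periods $kT_0\cdot l$. One small correction: your stated degeneration criterion $\mu_P>n-1$ (total-degree ranges of consecutive columns merely disjoint) is not by itself sufficient, since a differential drops total degree by exactly $1$, so a gap of $1$ between the top generator of column $N$ and the bottom generator of column $N+1$ (the borderline case $\mu_P=n$) would still allow a nonzero $d^1$; the argument genuinely requires $\mu_P>n$ (equivalently the $q$-coordinate comparison used in Lemma \ref{lem: ssterm}), which is exactly the hypothesis $c-k>n$, resp.\ $2(c-k)>n$, that you invoke, so your proof stands but the threshold is not ``exactly'' $n-1$.
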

\begin{proof}
 Since the Legendrian $\mathcal{L}:=\partial L\cong S^{n-1}$ is a connected component of the fixed point set $\Fix(\rho|_\Sigma)$ of the anti-contact involution $\rho|_\Sigma$, Reeb chords are of Morse-Bott type by Lemma \ref{preq_morsebott}. Hence, the Morse-Bott spectral sequence (Theorem \ref{thm: ss}) applies. Note that by Lemma \ref{lem_topofcomp} the topological condition for the spectral sequence is satisfied. Now, we need to compute the Maslov index of contractible Morse-Bott components. It follows from Lemma \ref{preq_cont} that contractible Morse-Bott components are given by the iterates of $\mathcal{L}_{kT_0}$, where $T_0>0$ is the minimal period of Reeb chords. We write $N\cdot \mathcal{L}$ for the $N$-th iterate of $\mathcal{L}_{kT_0}$. We claim that the Maslov index of a contractible Morse-Bott component $N\cdot \mathcal{L}$ is given by
 $$
 \mu(N\cdot \mathcal{L})=\begin{cases}
 	N(c-k) & \text{for \ref{c1},}\\
 	2N(c-k) & \text{for \ref{c2}.}
 \end{cases}
 $$
In \ref{c1}, a Reeb chord $c_P$ of the minimal period $T_0$ is a half Reeb chord of a simple periodic Reeb orbit $\gamma_P$ starting at a point in $\mathcal{L}$. Since a $k$-fold cover of $\gamma_P$ is contractible and its Maslov index is given by $\mu(\gamma_P^k)=2(c-k)$ (see \cite{Bou}, \cite{CDvK}, \cite[Section 4.2]{Ka}), we obtain $\mu(c_P^k)=\frac{1}{2}\mu(\gamma_P^k)=n-k$ by Proposition \ref{lem: indrel}. In \ref{c2}, we know that $c_P=\gamma_P$, so we get $\mu(c_P^k)=2(c-k)$. By the concatenation property of the Maslov index, we prove the claim.

On the other hand, our assumption implies that either $\mu(1\cdot \mathcal{L})>n$ or $\mu(1\cdot \mathcal{L})<2-n$ is satisfied. If $\mu(1\cdot \mathcal{L})>n$, then the same argument in the proof of Lemma \ref{lem: ssterm} shows that the spectral sequence \eqref{eq: MBss}  terminates at the $E^1$-page. On the other hand, the condition $\mu(1\cdot \mathcal{L})<2-n$ is equivalent to the condition that the $q$-coordinate of the bottom generator of $H(1\cdot \mathcal{L})$ is strictly larger than the $q$-coordinate of the top generator of $H(2\cdot \mathcal{L})$ in the spectral sequence \eqref{eq: MBss}. This also implies that the spectral sequence terminates at the $E^1$-page, so the computation follows. Since the period of a Morse-Bott component $N\cdot \mathcal{L}$ increases linearly in the iteration number $N$, we obtain a linear growth of $\WFH_*(L;W)$.
\end{proof}
Therefore, under the assumption in Proposition \ref{comp_prop}, we can apply Theorem \ref{ThmA} and \ref{ThmB}.
\begin{Rem} \
\begin{itemize}
	\item The index condition in Proposition \ref{comp_prop} is satisfied if $c$ or $k$ is sufficiently large. We will see concrete examples corresponding to each case in the next subsection.
	\item The index $\mu_P=\mu(c_P^k)$ is the Maslov index of the smallest contractible iterate of a Reeb chord of the minimal period.
\end{itemize}
\end{Rem}

\subsection{Explicit examples}\label{comp_exam}
We introduce two examples satisfying the assumption in Section \ref{comp_setup}. Moreover, it turns out that the real Lagrangian in these examples always has a connected component which is diffeomorphic to the ball. Hence, we can apply Proposition \ref{comp_prop} to achieve the uniform lower bounds.
\subsubsection{The complement of a smooth complex hypersurface in $\C P^n$}\label{example_cp}

		Let $\mathbb{C} P^n$ be a complex projective space with the Fubini-Study symplectic form $\omega_{FS}$. We normalize the symplectic form to satisfy $\langle [\omega_{FS}],[\mathbb{C}P^1]\rangle=1$, so $[\omega_{FS}]\in H^2(\C P^n;\Z)$ is a primitive class. Note that $c_1(\C P^n)=(n+1)[\ow_{FS}]$. Define an anti-symplectic involution $$\rho([z_0:\cdots:z_n])=[\bar{z}_0:\cdots:\bar{z}_n]$$ as a complex conjugate and we identify $\Fix(\rho)$ with $\R P^n$ via $$[z_0:\cdots :z_n]\in \Fix(\rho)\longmapsto [\text{Re} z_0:\cdots:\text{Re} z_n]\in \R P^n.$$ Consider a homogeneous polynomial $$f_k(z_0,\cdots,z_n)=\begin{cases}
			z_0^k+z_1^k+\cdots +z_{n-1}^k+z_n^k & \text{if $k\in \N$ is odd}. \\
			z_0^k+z_1^k+\cdots +z_{n-1}^k-z_n^k & \text{if $k\in \N$ is even}.
		\end{cases}$$ Then, $Q_k=\{f_k=0\}\subset \C P^n$ is a smooth complex hypersurface of degree $k$. By Lefschetz hyperplane theorem, $Q_k$ is simply connected for $n\ge 3$. Since the coefficients in the polynomial $f_k$ are real, $Q_k$ is $\rho$-invariant. By Proposition \ref{complement}, the complement $W_k:=\mathbb{C} P^n \setminus \nu(Q_k)$ is a real Liouville domain whose boundary is a prequantization bundle over $(Q_k,k\omega_{FS}|_{Q_k})$. For instance, if $k=1$, then $W_1\cong B^{2n}$ is a standard symplectic ball $B^{2n}$ and $\partial W_1\cong S^{2n-1}\to \C P^{n-1}$ is the Hopf fibration.

We shall prove that the real Lagrangian $\Fix(\rho|_{W_k})$ in $W_k$ has a connected component which is diffeomorphic to the ball $B^n$ for all $k\in \N$. For the convenience, let $L_{\C P^n}:=\Fix(\rho)\cong \R P^n$ and $L_{Q_k}:=\Fix(\rho|_{Q_k})\subset Q_k$ be the real Lagrangians in $\C P^n$ and $Q_k$, respectively. In view of the decomposition $M=W\cup \nu_M(Q)$, we can write $L_{\C P^n}=\Fix(\rho|_{W_k})\cup \nu_{L_{\C P^n}}(L_{Q_k})$, where $\nu_{L_{\C P^n}}(L_{Q_k})$ is a tubular neighborhood of $L_{Q_k}$ in $L_{\C P^n}$. This decomposition is useful to determine the topology of the real Lagrangian $\Fix(\rho|_{W_k})$ in $W_k$.
		\begin{Lem}\label{comp_ex1top}\
		\begin{enumerate}[label=(\arabic*)]
			\item If $k$ is odd, then $\Fix(\rho|_{W_k})$ is diffeomorphic to the ball $B^n$.
			\item If $k$ is even, then $\Fix(\rho|_{W_k})$ consists of two connected components, and one of them is diffeomorphic to the ball $B^n$.
		\end{enumerate}

		\end{Lem}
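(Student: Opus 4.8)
The strategy is to pin down the diffeomorphism type of the real locus directly. Since the tubular neighborhood $\nu_M(Q_k)$ produced in Lemma \ref{neighborhood of hyp} may be taken $\rho$-invariant, it meets $\R P^n=\Fix(\rho)$ in a tubular neighborhood of $V_k:=Q_k\cap \R P^n=\{f_k=0\}\subset \R P^n$, so that $\Fix(\rho|_{W_k})=\R P^n\cap W_k$ is a compact manifold with boundary whose interior is $\R P^n\setminus V_k$. One first checks that $0$ is a regular value of $f_k$ on $S^n$: the critical points of $f_k|_{S^n}$ occur only at points whose nonzero coordinates are all equal, and there $f_k\neq 0$; hence $V_k$ and its preimage $\widetilde V_k=\{f_k=0\}\cap S^n$ under the covering $S^n\to \R P^n$ are smooth hypersurfaces, and $\Fix(\rho|_{W_k})$ is obtained from $\R P^n\setminus V_k$ by attaching a collar.

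For $k$ even the argument is elementary. Here $f_k=z_0^k+\dots+z_{n-1}^k-z_n^k$ is homogeneous of even degree, so $\operatorname{sgn} f_k$ is well defined on $\R P^n$ and $\R P^n\setminus V_k=\{f_k>0\}\sqcup\{f_k<0\}$. If $f_k(x)<0$ then necessarily $x_n\neq 0$, so in the affine chart $\{x_n=1\}$ the component $\{f_k<0\}$ equals $\{(x_0,\dots,x_{n-1})\in\R^n: x_0^k+\dots+x_{n-1}^k<1\}$, a bounded open convex subset of $\R^n$ with smooth boundary (convexity since $t\mapsto t^k$ is convex for $k$ even); it is therefore diffeomorphic to the open ball, and adding back the collar of $V_k$ shows this component of $\Fix(\rho|_{W_k})$ is diffeomorphic to $B^n$. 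The complementary component is $\R P^n$ minus a closed ball, hence connected, so $V_k$ cuts $\R P^n$ into exactly these two pieces and the two-component statement follows.

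For $k$ odd I would pass to the sphere. Since $f_k(-x)=-f_k(x)$, the antipodal map interchanges $\{f_k>0\}$ and $\{f_k<0\}$ on $S^n$, so the covering $S^n\to \R P^n$ restricts to a diffeomorphism $\{f_k>0\}\cap S^n\xrightarrow{\ \sim\ }\R P^n\setminus V_k$; radial projection $x\mapsto x/f_k(x)^{1/k}$ (with inverse $y\mapsto y/|y|$) then identifies this open set diffeomorphically with the real affine Milnor fibre $\mathcal F_k=\{x\in\R^{n+1}: x_0^k+\dots+x_n^k=1\}$. Thus $\Fix(\rho|_{W_k})$ is a compact manifold with boundary whose interior is $\mathcal F_k$, and it only remains to prove $\mathcal F_k\cong \R^n$, whence $\Fix(\rho|_{W_k})\cong B^n$. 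For this one uses that $\mathcal F_k$ is a smooth graph: solving $x_n=(1-\sum_{i<n}x_i^k)^{1/k}$ exhibits $\mathcal F_k$ as homeomorphic to $\R^n$, and this is upgraded to a diffeomorphism either by a gradient-flow trivialization (the flow of $\nabla f_k/|\nabla f_k|^2$ is nonvanishing on $\{f_k>0\}$, giving $\{f_k>0\}\cong \mathcal F_k\times\R$, while $\{f_k>0\}$ is an open contractible subset of $\R^{n+1}$ that is simply connected at infinity) together with the uniqueness of the smooth structure on $\R^n$.

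The routine parts are the $\rho$-invariant neighborhood reduction and the convexity argument for $k$ even. The main obstacle is the last step for $k$ odd: although $\mathcal F_k$ is visibly a graph over $\R^n$, the graphing function fails to be smooth along $\{x_n=0\}$, so a genuine diffeomorphism $\mathcal F_k\cong \R^n$ has to be produced by a softer argument (a gradient-flow trivialization combined with smoothing theory), and one should treat the case $n=4$ with a little extra care.
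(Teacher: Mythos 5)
Your proposal follows the same route as the paper's proof: you identify $\Fix(\rho|_{W_k})$ with the compact piece of $\R P^n$ left after deleting a $\rho$-invariant tubular neighborhood of the real locus $V_k=Q_k\cap \R P^n$; you treat $k$ even in the affine chart $\{x_n=1\}$ exactly as the paper does (the component $\{f_k<0\}$ becomes $\{x_0^k+\cdots+x_{n-1}^k<1\}\cong \R^n$); and for $k$ odd your composite of the sphere lift with the radial rescaling $x\mapsto x/f_k(x)^{1/k}$ is precisely the paper's map $[x_0:\cdots:x_n]\mapsto (x_0,\dots,x_n)/(\sum_j x_j^k)^{1/k}$ onto the real Fermat hypersurface $X_k=\{x\in\R^{n+1} : \sum_i x_i^k=1\}$. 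The divergence is at the remaining claim $X_k\cong\R^n$: the paper asserts it without comment, whereas you correctly observe that the coordinate graph $(x_0,\dots,x_{n-1})\mapsto (x_0,\dots,x_{n-1},(1-\sum_{i<n}x_i^k)^{1/k})$ is only a homeomorphism, and you try to upgrade it by smoothing theory. That is where your write-up has a genuine gap: for $n=4$ (which is allowed, since the standing hypothesis is only $n\ge 3$), the properties you invoke --- $X_k$ smooth, homeomorphic to $\R^4$, contractible, simply connected at infinity, $X_k\times\R\cong\{f_k>0\}$ --- cannot decide the smooth structure, because an exotic $\R^4$ has all of them (its product with $\R$ is standard $\R^5$, smoothings being unique in dimension $5$). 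You acknowledge the issue but do not resolve it, so for $n=4$, $k$ odd, your argument is incomplete.

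The gap closes easily, in all dimensions at once, by projecting along a direction that is everywhere transverse to $X_k$ instead of along a coordinate axis. Since $k-1$ is even, $\nabla f_k(x)\cdot \mathbf{1}=k\sum_i x_i^{k-1}>0$ for every $x\in X_k$ (the sum vanishes only at the origin, which is not on $X_k$), where $\mathbf{1}=(1,\dots,1)$; hence the orthogonal projection $\pi$ onto the hyperplane $H=\{\sum_i x_i=0\}\cong\R^n$ restricts to a local diffeomorphism on $X_k$. Moreover, for each $p\in H$ the function $t\mapsto \sum_i (p_i+t)^k$ has nonnegative derivative vanishing for at most one value of $t$, so it is strictly increasing and proper onto $\R$, and the line $p+t\mathbf{1}$ meets $X_k$ in exactly one point. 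Thus $\pi|_{X_k}:X_k\to H$ is a bijective local diffeomorphism, i.e.\ a diffeomorphism, giving $X_k\cong\R^n$ with no appeal to smoothing theory; this is presumably what the paper has in mind when it calls $X_k$ a smooth variety diffeomorphic to $\R^n$. With this replacement (the rest of your argument, including the $k$ even case and the collar bookkeeping, matches the paper's), the proof is complete.
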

		
		\begin{proof}
			\emph{Case 1. $k$ is odd.}
			
			Let $X_k=\{(x_0,\cdots,x_n)\in \R^{n+1}\mid x_0^k+\cdots+x_n^k=1\}$ be a smooth variety diffeomorphic to $\R^n$. We define a diffeomorphism
			$$
			\Phi:X_k\to L_{\C P^n}\setminus L_{Q_k},\quad \Phi(x_0,\cdots,x_n)\mapsto [x_0:\cdots:x_n],
			$$
			with its inverse map $\displaystyle \Phi^{-1}([x_0:\cdots:x_n])=\frac{(x_0,\cdots,x_n)}{(\sum_jx_j^k)^{1/k}}$. Hence, $\Fix(\rho|_{W_k})=L_{\C P^n}\setminus \nu(L_{Q_k})$ is diffeomorphic to the ball.\\\\
			\emph{Case 2. $k$ is even.}
			
In this case, $L_{\C P^n}\setminus L_{Q_k}$ has two connected components, namely,
\begin{eqnarray*}
	L^0 &:=&\{ [x_0:\cdots: x_n] \ |\ x_0^k+\cdots+x_{n-1}^k<x_n^k,\  x_n\ne 0\} \\
	L^1 &:=& \{ [x_0:\cdots: x_n] \ |\ x_0^k+\cdots+x_{n-1}^k>x_n^k\}
\end{eqnarray*}
and we check that $L^0$ is diffeomorphic to $A:=\{(y_0,\cdots,y_{n-1})\in \R^n\ |\ y_0^k+\cdots +y_{n-1}^k<1\}\cong \R^n$ via the diffeomorphism
$$
\Phi:A\to L^0,\quad \Phi(y_0,\cdots,y_{n-1})=[y_0:\cdots:y_{n-1}:1]
$$ with the inverse map $\displaystyle \Phi^{-1}([x_0:\cdots:x_n])=\left(\frac{x_0}{x_n},\cdots,\frac{x_{n-1}}{x_n}\right)$. 
The connected component of $\Fix(\rho|_{W_k})$ contained in $L^0$ is diffeomorphic to the ball $B^n$.
		\end{proof}

We thus take an admissible Lagrangian $L$ in $W_k$ as a connected component of the real Lagrangian $\Fix(\rho|_{W_k})$ that is diffeomorphic to the ball $B^n$. As a result, Proposition \ref{comp_prop} is suitably applied when the degree $k\in \N$ is sufficiently large as follows.
\begin{Cor}\label{comp_exam1cor}
Let $n\ge 3$. Suppose that either
\begin{itemize}
	\item $k$ is odd and $k>2n-1$, or
	\item $k$ is even and $k>\left\lfloor \frac{3}{2}n \right\rfloor$.
\end{itemize}
Then, the wrapped Floer homology group of a Lagrangian $L$ in $W_k$ has a linear growth, and is given by
$$
\WFH_*(L;W_k) = \begin{cases} \Z_2 & *=0, \mu_P\cdot l -n +1, \mu_P\cdot l; l\in \N ; \\ 0 & \text{otherwise},  \end{cases}
$$
where $\mu_P=\begin{cases}
	n+1-k & \text{if $k$ is odd},\\
	2(n+1-k) & \text{if $k$ is even}.
\end{cases}$
\end{Cor}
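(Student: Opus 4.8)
The strategy is to verify that the example of Section~\ref{example_cp} meets every hypothesis of Proposition~\ref{comp_prop} and then read off the conclusion. First I would confirm the setup of Section~\ref{comp_setup}. On $\C P^n$ the Fubini--Study class $[\ow_{FS}]$ is primitive after our normalization, $c_1(\C P^n)=(n+1)[\ow_{FS}]$ so $(\C P^n,\ow_{FS})$ is monotone with constant $c=n+1$, and since $n\ge 3$ the hypersurface $Q_k=\{f_k=0\}$ is simply connected by the Lefschetz hyperplane theorem. Being a smooth complex hypersurface in a compact K\"ahler manifold, $Q_k$ is of Donaldson type in the sense required (Remark~\ref{remark_assumption}), and the polynomial $f_k$ has real coefficients, so $Q_k$ is $\rho$-invariant for the complex-conjugation involution. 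Proposition~\ref{complement} then makes $W_k=\C P^n\setminus\nu(Q_k)$ a real Liouville domain whose boundary $(\Sigma,\xi)$ is a prequantization bundle over $(Q_k,k\ow_{FS}|_{Q_k})$. I would also record here that $\Fix(\rho|_{Q_k})$ is path-connected, as required before Lemma~\ref{preq_morsebott}: for $k$ even this real locus lies entirely in the chart $x_n\ne 0$ and identifies with the smooth compact hypersurface $\{x_0^k+\cdots+x_{n-1}^k=1\}\subset\R^n$, which is the unit sphere of the $\ell^k$-norm and hence diffeomorphic to $S^{n-1}$; for $k$ odd one checks connectedness directly from the equation $\sum_j x_j^k=0$ in $\R P^n$.

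Next I would single out the admissible Lagrangian. By Lemma~\ref{comp_ex1top}, $\Fix(\rho|_{W_k})\cong B^n$ when $k$ is odd, and for $k$ even it has two components, one of them $\cong B^n$; let $L$ be a component diffeomorphic to $B^n$, so $\mathcal L=\p L\cong S^{n-1}$ and $\pi_1(L)=\pi_1(\mathcal L)=0$ since $n\ge 3$. Then Lemma~\ref{lem_topofcomp} gives $\pi_1(\Sigma,\mathcal L)\cong\pi_1(W_k,L)\cong\Z_k$ with the inclusion-induced map a bijection, and together with Lemma~\ref{comp_top} this furnishes conditions \ref{ms1}--\ref{ms3} needed to run the Morse--Bott spectral sequence. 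To decide between the cases \ref{c1} and \ref{c2}, I would note that $\Fix(\rho|_\Sigma)=\p\,\Fix(\rho|_{W_k})$ because the real Lagrangian meets $\Sigma$ transversally: hence $\Fix(\rho|_\Sigma)\cong S^{n-1}$ is connected when $k$ is odd, placing us in case \ref{c1}, while when $k$ is even the two components of $\Fix(\rho|_{W_k})$ have disjoint boundaries, so $\Fix(\rho|_\Sigma)$ has at least two components, and being a $2$-fold cover of the connected $\Fix(\rho|_{Q_k})$ it has exactly two, placing us in case \ref{c2}. This is consistent with the proof of Lemma~\ref{preq_cont}, which shows case \ref{c1} forces $k$ odd.

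It then remains to translate the numerical hypotheses of Proposition~\ref{comp_prop} using $c=n+1$. In case \ref{c1} ($k$ odd), the condition ``$c-k>n$ or $c-k<2-n$'' becomes ``$k<1$ or $k>2n-1$'', which for $k\in\N$ is just $k>2n-1$. In case \ref{c2} ($k$ even), the condition ``$2(c-k)>n$ or $2(c-k)<2-n$'' becomes ``$2k<n+2$ or $2k>3n$'', and in the large-degree regime of the statement this is $k>3n/2$, equivalently $k>\lfloor 3n/2\rfloor$ for integral $k$. Under these conditions Proposition~\ref{comp_prop} applies directly and yields both the linear growth of $\WFH_*(L;W_k)$ and the displayed computation, with $\mu_P=c-k=n+1-k$ when $k$ is odd and $\mu_P=2(c-k)=2(n+1-k)$ when $k$ is even.

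Since Proposition~\ref{comp_prop} already carries all of the Floer-theoretic content, the remaining difficulty is purely topological bookkeeping: verifying that $\Fix(\rho|_{Q_k})$ is path-connected and that the parity of $k$ is exactly what toggles the connectivity of the real Legendrian $\Fix(\rho|_\Sigma)\subset\Sigma$, so that one genuinely lands in case \ref{c1} for $k$ odd and in case \ref{c2} for $k$ even. I expect this to be the only subtle point, and I would settle it by inspecting the explicit parametrizations of the real loci already used in the proof of Lemma~\ref{comp_ex1top}.
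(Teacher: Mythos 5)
Your proposal is correct and follows essentially the same route as the paper: the corollary is obtained by feeding $c=n+1$, Lemma \ref{comp_ex1top}, and the case distinction \ref{c1}/\ref{c2} into Proposition \ref{comp_prop}, and your numerical translations ($k>2n-1$ for $k$ odd, $k>\lfloor 3n/2\rfloor$ for $k$ even, with $\mu_P=n+1-k$ resp.\ $2(n+1-k)$) agree with the statement. The only difference is that you spell out the connectivity bookkeeping (path-connectedness of $\Fix(\rho|_{Q_k})$ and the matching of the parity of $k$ with cases \ref{c1}/\ref{c2}), which the paper leaves implicit; this is a welcome but not essentially different elaboration.
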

As we desired, we obtain the following results by Theorem \ref{ThmA} and \ref{ThmB}.
\begin{Thm}\label{main thm for comp1}
Let $\tau : \widehat W_k \rightarrow \widehat W_k$ be the twist. Under the assumption in Corollary \ref{comp_exam1cor}, we have that
\begin{itemize}
	\item If a compactly supported symplectomorphism $\phi$ satisfies $[\phi] =  [\tau^l] \in \pi_0(\symp^c(\widehat W_k))$ for some $l \neq 0$, then $s_n(\phi) \geq 1$.
	\item a class $[\tau]$ has an infinite order in $\pi_0(\symp^c(\widehat{W}_k))$.
\end{itemize}
\end{Thm}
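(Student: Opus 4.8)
The plan is to obtain Theorem~\ref{main thm for comp1} as a direct application of Theorem~\ref{ThmA} and Theorem~\ref{ThmB}, so the only real work is to verify their hypotheses for the complement $W_k = \C P^n \setminus \nu(Q_k)$. First I would recall from Proposition~\ref{complement} that $(W_k,\lambda,\rho|_{W_k})$ is a real Liouville domain whose contact boundary is a prequantization bundle over $(Q_k, k\omega_{FS}|_{Q_k})$; a prequantization bundle has a periodic Reeb flow (its simple Reeb orbits are the fiber circles, all of one common period), so $\partial W_k$ carries a periodic Reeb flow and the fibered twist $\tau\colon \widehat W_k \to \widehat W_k$ of Section~\ref{Fibered twist} is well defined. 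The remaining topological hypothesis $H^1_c(W_k;\R)=0$ holds because $W_k$ is a Weinstein domain built from handles of index at most $n$, hence $H^1_c(W_k;\R)\cong H_{2n-1}(W_k;\R)=0$ for $n\ge 2$, as observed in Remark~\ref{remark_assumption}.

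Next I would single out the admissible Lagrangian ball required by Theorem~\ref{ThmA}. By Lemma~\ref{comp_ex1top}, the real Lagrangian $\Fix(\rho|_{W_k})$ always possesses a connected component $L$ diffeomorphic to $B^n$ --- the entire fixed point set when $k$ is odd, and one of its two components when $k$ is even. Being a connected component of the fixed point set of an exact anti-symplectic involution, $L$ is admissible and $\partial L\cong S^{n-1}$ is a Legendrian (see Section~\ref{def:real}). Under the numerical hypotheses of Corollary~\ref{comp_exam1cor} --- which are precisely those forcing $\mu_P=\mu(1\cdot \mathcal{L})$ to lie outside the interval $[2-n,n]$ used in Proposition~\ref{comp_prop}, via $c_1(\C P^n)=(n+1)[\omega_{FS}]$ --- Corollary~\ref{comp_exam1cor} computes $\WFH_*(L;W_k)$ explicitly and, in particular, shows that $\WFH_*(L;W_k)$ has a linear growth in the sense of Definition~\ref{def: lineargrowth}.

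With these facts in hand the conclusion is immediate. Theorem~\ref{ThmA}, applied to the admissible Lagrangian ball $L$, gives $s_n(\phi)\ge 1$ for every compactly supported symplectomorphism $\phi$ with $[\phi]=[\tau^l]\in \pi_0(\symp^c(\widehat W_k))$ and $l\ne 0$. Since a linear growth of $\WFH_*(L;W_k)$ implies $\dim\WFH_*(L;W_k)=\infty$, Theorem~\ref{ThmB} shows that $[\tau]$ has infinite order in $\pi_0(\symp^c(\widehat W_k))$. The only non-formal ingredient is the linear growth of $\WFH_*(L;W_k)$, which was already established in the proof of Corollary~\ref{comp_exam1cor} through the Morse--Bott spectral sequence of Theorem~\ref{thm: ss}; the residual bookkeeping is merely to check that the conditions ``$k$ odd, $k>2n-1$'' and ``$k$ even, $k>\left\lfloor \frac{3}{2}n\right\rfloor$'' match the index conditions of Proposition~\ref{comp_prop} with $c=n+1$.
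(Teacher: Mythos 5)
Your proposal is correct and follows essentially the same route as the paper: the paper deduces Theorem \ref{main thm for comp1} directly from Theorem \ref{ThmA} and Theorem \ref{ThmB}, with the hypotheses supplied by Proposition \ref{complement}, Remark \ref{remark_assumption}, Lemma \ref{comp_ex1top}, and the linear growth of $\WFH_*(L;W_k)$ from Corollary \ref{comp_exam1cor} (via Proposition \ref{comp_prop}). The only nitpick is the word ``precisely'': for $k$ even the hypothesis $k>\left\lfloor \tfrac{3}{2}n\right\rfloor$ corresponds only to the branch $\mu_P<2-n$ of the index condition in Proposition \ref{comp_prop}, not to the full condition that $\mu_P$ lies outside $[2-n,n]$, but this does not affect the argument since the implication goes in the needed direction.
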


\begin{Rem}
In \cite{Sei}, it is shown that the component of $[\tau]$ has infinite order in $\pi_0(\symp^c(\widehat{W}_k))$ for $k\ne 1$. If $k=1$, then $W_1\cong (B^{2n},\omega_{\text{std}})$ and $\tau$ is compactly supported symplectically isotopic to the identity \cite[Examples 4.1 (a)]{Sei}.
\end{Rem}

\subsubsection{The complement of a hyperplane section in a complex hypersurface}\label{sec: comp_ex2}
Let $M_d\subset (\mathbb{C}P^{n+1},\ow_{FS})$ be a complex hypersurface of degree $d\in \N$ (given by the same polynomial in Section \ref{example_cp}) and let $H=\{z_0=0\}\cap M_d$ be a hyperplane section (so it is a hypersurface of degree 1). Note that $[\ow|_{M_d}]$ is primitive and $c_1(M_d)=(n+1-d)[\ow_{FS}|_{M_d}]$ by Lemma \ref{comp_subtop}. Again, the complement $W_d^{2n}=M_d\setminus \nu_{M_d}(H)$ is a real Liouville domain by Proposition \ref{complement}. By Lefschetz hyperplane theorem, $M_d$ and $H$ are simply connected for $n\ge 3$. 

In the following, we determine the topology of the real Lagrangian in the complement $W_d$. The result is similar to the situation in the Milnor fibers of $A_k$-type singularities, see Lemma \ref{lem: topofLag}. 
\begin{Lem}\label{comp_ex2top}
		The real Lagrangian $\Fix(\rho|_{W_d})$ in $W_d$ is diffeomorphic to the ball $B^n$ if $d$ is odd and is diffeomorphic to the disjoint union of the balls $B^n \sqcup B^n$ if $d$ is even. 
\end{Lem}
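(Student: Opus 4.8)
The plan is to follow the strategy of Lemma~\ref{comp_ex1top} and reduce everything to an explicit real affine variety. Write $L_{M_d}:=\Fix(\rho|_{M_d})$ and $L_H:=\Fix(\rho|_H)$ for the real loci of $M_d$ and of the hyperplane section $H=\{z_0=0\}\cap M_d$. Since the tubular neighborhood $\nu_{M_d}(H)$ can be taken $\rho$-invariant, it meets $L_{M_d}$ in a tubular neighborhood of $L_H$, so that
$$
\Fix(\rho|_{W_d})=L_{M_d}\setminus\nu_{L_{M_d}}(L_H)
$$
is a compact manifold with boundary whose interior is diffeomorphic to $V:=L_{M_d}\setminus L_H=L_{M_d}\cap\{z_0\neq 0\}$. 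In the affine chart $\{z_0\neq 0\}$ with coordinates $w_j=z_j/z_0$ one has
$$
V=\{(w_1,\dots,w_{n+1})\in\R^{n+1}\mid 1+w_1^d+\dots+w_n^d\pm w_{n+1}^d=0\},
$$
with a minus sign when $d$ is even and a plus sign when $d$ is odd. So the lemma reduces to (i) determining the diffeomorphism type of $V$, and (ii) checking that excising a tubular neighborhood of $L_H$ turns $V$ into a ball, or a disjoint union of two balls.

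For $d$ even the defining equation reads $w_{n+1}^d=1+w_1^d+\dots+w_n^d\geq 1$, hence $V$ is the disjoint union of the two smooth graphs $w_{n+1}=\pm(1+\sum_{j\leq n}w_j^d)^{1/d}$ over $(w_1,\dots,w_n)\in\R^n$ (the $d$-th root of a function bounded below by $1$ is smooth), so each component is diffeomorphic to $\R^n$. For $d$ odd the graph of $w_{n+1}$ over a coordinate hyperplane acquires a vertical tangency along $\{w_{n+1}=0\}$, since the odd $d$-th root is not differentiable at $0$; instead I would project $V$ orthogonally onto the hyperplane $\Pi=\{v\in\R^{n+1}\mid\sum_j v_j=0\}$. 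The key point is that, $d-1$ being even, a normal vector to $V$ at $p$ is proportional to $(p_1^{d-1},\dots,p_{n+1}^{d-1})$, all of whose entries are $\geq 0$ with $\sum_j p_j^{d-1}>0$ because $p\neq 0$ on $V$; thus the normal is never tangent to $\Pi$ and the orthogonal projection $\pi\colon V\to\Pi$ is a submersion everywhere. Moreover, along each line $p=w+s\mathbf{1}$ the function $s\mapsto\sum_j(w_j+s)^d$ is proper and strictly increasing (its derivative $d\sum_j(w_j+s)^{d-1}$ is $\geq 0$ and vanishes at no more than one point), so $\pi$ is proper and injective. A proper injective submersion between manifolds of equal dimension is a diffeomorphism, whence $V\cong\Pi\cong\R^n$.

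Finally I would promote this to the statement that the compact manifold $\Fix(\rho|_{W_d})=L_{M_d}\setminus\nu(L_H)$ is the standard ball. Since $L_H$ has codimension one in the closed manifold $L_{M_d}$, the removed set is a neighborhood of the end of $V$; as $V\cong\R^n$ with $n\geq 3$, this end is simply connected and carries the standard product structure $S^{n-1}\times[0,\infty)$, so by uniqueness of tubular and collar neighborhoods the boundary $\partial\Fix(\rho|_{W_d})$ is a standard $(n-1)$-sphere and $\Fix(\rho|_{W_d})\cong B^n$ — for $d$ odd a single ball, for $d$ even a disjoint union of two balls. I expect this last point — controlling the end of $V$, i.e., passing from ``the interior is $\R^n$'' to ``the compactification is $B^n$'' — to be the only genuinely delicate step; it is handled exactly as in the proofs of Lemma~\ref{lem: topofLag} and Lemma~\ref{comp_ex1top}, invoking the $h$-cobordism theorem in high dimensions (the low-dimensional cases being classical). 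Everything else is a direct computation.
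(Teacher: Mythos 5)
You follow the same route as the paper: decompose $\Fix(\rho|_{M_d})=\Fix(\rho|_{W_d})\cup\nu(\Fix(\rho|_H))$ and identify $\Fix(\rho|_{M_d})\setminus\Fix(\rho|_H)$ with an explicit real affine hypersurface, showing it is $\R^n$ for $d$ odd and $\R^n\sqcup\R^n$ for $d$ even; your chart $\{z_0\neq 0\}$ computation is equivalent to the paper's normalizations ($x_0=-1$ for $d$ odd, $x_{n+1}=1$ for $d$ even). Two differences are worth recording. First, for $d$ odd the paper simply quotes from Lemma~\ref{comp_ex1top} that the variety $\{\sum_j x_j^d=1\}$ is diffeomorphic to $\R^n$, without proof; your orthogonal projection onto $\Pi$ actually proves this, and you correctly flag that the naive graph description fails to be smooth along $\{w_{n+1}=0\}$ --- note that once $\pi|_V$ is an injective local diffeomorphism which is onto the connected target $\Pi$ (existence of the solution $s$ follows from your monotonicity/properness of $s\mapsto\sum_j(w_j+s)^d$), it is automatically a diffeomorphism, so properness of $\pi|_V$ is not even needed. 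Second, the passage from ``the interior is $\R^n$'' to ``the compact manifold with boundary is $B^n$'' is asserted without comment in the paper (its Milnor-fiber analogue, Lemma~\ref{lem: topofLag}, is instead handled by a Morse-function argument with a single critical point); you make this step explicit via simple connectivity of the end and the $h$-cobordism theorem, which is fine for $n\ge 6$, but the parenthetical that the low-dimensional cases are ``classical'' is too quick: for $n=5$ one needs to know that the boundary is the standard $S^4$ and not merely a homotopy $4$-sphere, and for $n=4$ the assertion is essentially the smooth Schoenflies problem. Since this section only assumes $n\ge 3$, covering the full range would require replacing that step by an explicit argument on the affine model in the spirit of Lemma~\ref{lem: topofLag}; this is a caveat you share with the paper's own proof rather than a gap relative to it.
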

\begin{proof}
From the decomposition $\Fix(\rho|_{M_d})=\Fix(\rho|_{W_d})\cup \nu(\Fix(\rho|_H))$, it suffices to show that $\Fix(\rho|_{M_d})\setminus \Fix(\rho|_H)$ is diffeomorphic to $\R^n$ or $\R^n\sqcup\R^n$ for $k$ odd or $k$ even, respectively. \\\\
\emph{Case 1. $d$ is odd.}

Let $X_d$ be a smooth variety diffeomorphic to $\R^n$ given in Lemma \ref{comp_ex1top}. We have a diffeomorphism
$$
		\Phi : X_d \longrightarrow  \Fix(\rho|_{M_d})\setminus \Fix(\rho|_H),\quad \Phi(x_1,\cdots,x_{n+1}) \longmapsto [-1:x_1:\cdots:x_{n+1}]
$$
	with the inverse map $\displaystyle \Phi^{-1}([x_0:\cdots:x_{n+1}])=\frac{(x_1,\cdots,x_{n+1})}{-x_0}$. Hence, $\Fix(\rho|_{W_d})$ is diffeomorphic to the ball $B^n$.\\\\
\emph{Case 2. $d$ is even.}

Let $A=\{(x_0,\cdots,x_{n})\ |\ x_0^k+\cdots+x_{n}^k=1,\ x_1\ne 0\}$ be the space in $\R^{n+1}$ diffeomorphic to $\R^n\sqcup \R^n$. One has a diffeomorphism defined by
$$
\Psi:A \longrightarrow \Fix(\rho|_{M_d})\setminus \Fix(\rho|_H),\quad \Psi(x_0,\cdots,x_{n}) \longmapsto [x_0:\cdots:x_n:1]
$$
with the inverse map $\displaystyle \Psi^{-1}([x_0:\cdots:x_{n+1}])=\frac{(x_0,\cdots,x_{n})}{x_{n+1}}$. Hence, $\Fix(\rho|_{W_d})$ is the disjoint union of the balls $B^n\sqcup B^n$.
\end{proof}

We set an admissible Lagrangian $L\subset W_d$ to be a connected component of the real Lagrangian $\Fix(\rho|_{W_d})$. Then, $L$ is diffeomorphic to the ball $B^n$ as the above lemma. For sufficiently large degree $d\in \N$, Proposition \ref{comp_prop} gives the following corollary immediately.
\begin{Cor}\label{comp_exam2cor}
Let $n\ge 3$. Suppose that either
\begin{itemize}
	\item $d$ is odd and $d>2n-2$, or
	\item $d$ is even and $d>\lfloor \frac{3}{2}n \rfloor-1$.
\end{itemize}
Then, the wrapped Floer homology group of a Lagrangian $L$ in $W_d$ has a linear growth, and is given by
$$
\WFH_*(L;W_d) = \begin{cases} \Z_2 & *=0, \mu_P\cdot l -n +1, \mu_P\cdot l;l\in \N ; \\ 0 & \text{otherwise},  \end{cases}
$$
where $\mu_P=\begin{cases}
	n-d & \text{if $d$ is odd}, \\
	2(n-d) & \text{if $d$ is even}.
\end{cases}$
\end{Cor}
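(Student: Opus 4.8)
The plan is to obtain Corollary \ref{comp_exam2cor} as a direct application of Proposition \ref{comp_prop} to the real Liouville domain $W_d = M_d \setminus \nu_{M_d}(H)$: the actual work is to check that $(M_d,\omega_{FS}|_{M_d},\rho)$ together with the hyperplane section $H$ fits the setup of Section \ref{comp_setup}, to identify which branch of Proposition \ref{comp_prop} applies, and to translate the index inequalities into the stated bounds on $d$.

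First I would run through the hypotheses of Section \ref{comp_setup} with $M = M_d$, $Q = H$ and $k = 1$: the class $[\omega_{FS}|_{M_d}]$ is primitive, $(M_d,\omega_{FS}|_{M_d})$ is monotone with $c_1(M_d) = c\,[\omega_{FS}|_{M_d}]$ for $c = n+1-d$ (recorded at the start of Section \ref{sec: comp_ex2} via Lemma \ref{comp_subtop}), $H$ is simply connected for $n \ge 3$ by the Lefschetz hyperplane theorem, $H$ is $\rho$-invariant since it is cut out by a polynomial with real coefficients, and $H$ --- being a smooth complex hypersurface of a K\"ahler manifold --- is of Donaldson type by Remark \ref{remark_assumption} (cf.\ \cite{Bir1}). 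Proposition \ref{complement} then exhibits $W_d$ as a real (Weinstein) Liouville domain whose boundary $(\Sigma,\xi)$ is a prequantization bundle over $(H,\omega_{FS}|_H)$ with the induced anti-contact involution; one also checks directly that $\Fix(\rho|_H)$ is path-connected, the standing assumption of the Reeb-chord discussion. Consequently Lemma \ref{preq_morsebott} applies and the Reeb chords of the real Legendrian are of Morse--Bott type, while the topological conditions for the spectral sequence of Theorem \ref{thm: ss} hold by Lemma \ref{comp_top} and Lemma \ref{lem_topofcomp} (together with $H_1(L)=H_1(\mathcal{L})=0$, since $L\cong B^n$ and $\mathcal{L}\cong S^{n-1}$ with $n\ge 3$).

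Next I would feed in Lemma \ref{comp_ex2top}: the real Lagrangian $\Fix(\rho|_{W_d})$ is $B^n$ for $d$ odd and $B^n \sqcup B^n$ for $d$ even. Choosing $L$ to be a ball component and $\mathcal{L} = \partial L \cong S^{n-1}$, and using $\Fix(\rho|_\Sigma) = \partial\big(\Fix(\rho|_{W_d})\big)$, we are in case $(1)$ of Proposition \ref{comp_prop} --- hence in case \ref{c1}, the fixed-point Legendrian being connected --- when $d$ is odd, and in case $(2)$ of Proposition \ref{comp_prop} --- hence in case \ref{c2} --- when $d$ is even; in both cases $k = 1$ is odd, which is the parity constraint that makes half Reeb chords contractible at suitable iterates in the \ref{c1} analysis of Lemma \ref{preq_cont}. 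With $c - k = (n+1-d) - 1 = n-d$, the index hypothesis of Proposition \ref{comp_prop} becomes $n-d < 2-n$, i.e.\ $d > 2n-2$, in case $(1)$, and $2(n-d) < 2-n$, i.e.\ $d > \tfrac{3n-2}{2}$, in case $(2)$; for integral $d$ the latter is exactly $d > \lfloor \tfrac32 n\rfloor - 1$, and under these bounds it is always the second alternative ($c-k<2-n$, resp.\ $2(c-k)<2-n$) that is realized, the alternatives $c-k>n$ resp.\ $2(c-k)>n$ being incompatible with $d>2n-2$ resp.\ $d>\tfrac{3n-2}{2}$. These are precisely the hypotheses of the statement, so Proposition \ref{comp_prop} yields the linear growth of $\WFH_*(L;W_d)$ and the displayed formula, with $\mu_P = c-k = n-d$ for $d$ odd and $\mu_P = 2(c-k) = 2(n-d)$ for $d$ even. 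The argument is thus essentially bookkeeping; the only point that takes genuine care is keeping the topological dichotomy ($\Fix(\rho|_{W_d})$ connected or not, which selects the branch of Proposition \ref{comp_prop}) synchronized with the Reeb-dynamical dichotomy (\ref{c1} versus \ref{c2}, which fixes the value of $\mu_P$), and these are forced to agree for a degree-one hypersurface through $\mathcal{L} = \partial L$ and the parity of $d$.
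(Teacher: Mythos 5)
Your proposal is correct and follows essentially the same route as the paper: the paper also obtains Corollary \ref{comp_exam2cor} as an immediate application of Proposition \ref{comp_prop}, using the setup verification at the start of Section \ref{sec: comp_ex2} (primitivity, $c=n+1-d$, $k=1$, Lefschetz, Donaldson type via Remark \ref{remark_assumption}), Lemma \ref{comp_ex2top} to select case (1) for $d$ odd and case (2) for $d$ even, and the same translation of $c-k=n-d<2-n$ resp. $2(n-d)<2-n$ into $d>2n-2$ resp. $d>\lfloor\frac{3}{2}n\rfloor-1$. Your bookkeeping, including the matching of the Lagrangian connectedness dichotomy with \ref{c1}/\ref{c2} and the resulting values of $\mu_P$, agrees with the paper's argument.
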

We again obtain the following results by Theorem \ref{ThmA} and \ref{ThmB}.
\begin{Thm}\label{main thm for comp2}
Let $\tau : \widehat W_d \rightarrow \widehat W_d$ be a fibered twist. Under the assumption in Corollary \ref{comp_exam2cor}, we have that
\begin{itemize}
	\item If a compactly supported symplectomorphism $\phi$ satisfies $[\phi] =  [\tau^l] \in \pi_0(\symp^c(\widehat W_d))$ for some $l \neq 0$, then $s_n(\phi) \geq 1$.
	\item a class $[\tau]$ has an infinite order in $\pi_0(\symp^c(\widehat{W}_d))$.
\end{itemize} 
\end{Thm}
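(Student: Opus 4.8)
The proof will be a direct application of Theorem \ref{ThmA} and Theorem \ref{ThmB}, and runs verbatim as the proof of Theorem \ref{main thm for comp1}; the point is that all the hypotheses of those two general theorems have already been checked for the complement $W_d$ in the preceding subsections. First I would recall, from Proposition \ref{complement}, that $(W_d = M_d \setminus \nu_{M_d}(H),\, \ow|_{W_d},\, \rho|_{W_d})$ is a real Liouville domain whose contact boundary $(\Sigma,\xi)$ is a prequantization bundle over $(H, \ow|_H)$. Since a prequantization bundle carries a periodic Reeb flow (all simple orbits being fiber circles of equal period), the standing hypothesis of Theorem \ref{ThmA} that the boundary Reeb flow is periodic holds, and hence a fibered twist $\tau : \widehat W_d \to \widehat W_d$ is defined as in Section \ref{Fibered twist}.

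Next I would verify the cohomological condition $H^1_c(W_d;\R)=0$. Since $H$ is a symplectic Donaldson hypersurface, $W_d$ is a Weinstein domain, so it has the homotopy type of a CW complex of dimension at most $n$; by Poincar\'{e}--Lefschetz duality $H^1_c(W_d;\R)\cong H_{2n-1}(W_d;\R)=0$ for $n\ge 2$, as recorded in Remark \ref{remark_assumption}. In particular $\symp^c_0(\widehat W_d)=\ham^c(\widehat W_d)$, which is the form in which this hypothesis enters the proofs of Theorems \ref{ThmA} and \ref{ThmB}.

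It then remains to exhibit an admissible Lagrangian ball with wrapped Floer homology of linear growth. By Lemma \ref{comp_ex2top} the real Lagrangian $\Fix(\rho|_{W_d})$ always contains a connected component $L$ diffeomorphic to $B^n$, and, being a component of the fixed point set of an exact anti-symplectic involution, $L$ is admissible (see Section \ref{def:real}). Under the hypotheses of Corollary \ref{comp_exam2cor} the wrapped Floer homology $\WFH_*(L;W_d)$ is computed explicitly and has a linear growth, hence is infinite dimensional. Applying Theorem \ref{ThmA} to this Lagrangian gives $s_n(\phi)\ge 1$ whenever $[\phi]=[\tau^l]\in\pi_0(\symp^c(\widehat W_d))$ with $l\ne 0$, and applying Theorem \ref{ThmB} shows that $[\tau]$ has infinite order in $\pi_0(\symp^c(\widehat W_d))$, which is the claimed statement.

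There is no genuine obstacle beyond bookkeeping. The only point that requires care is to confirm that the numerical hypotheses of Corollary \ref{comp_exam2cor} --- namely $d>2n-2$ for $d$ odd and $d>\lfloor \tfrac{3}{2}n\rfloor-1$ for $d$ even --- are precisely what makes the index $\mu_P=\mu(1\cdot\mathcal{L})$ satisfy $\mu_P>n$ or $\mu_P<2-n$ in Proposition \ref{comp_prop} (using $c_1(M_d)=(n+1-d)[\ow_{FS}|_{M_d}]$ from Lemma \ref{comp_subtop}, so that $c-k=n-d$), whence the Morse-Bott spectral sequence of Theorem \ref{thm: ss} degenerates at the $E^1$-page and the linear growth of $\WFH_*(L;W_d)$ is read off directly from the action filtration. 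This verification is the only nontrivial computation, and it has already been carried out in Corollary \ref{comp_exam2cor}.
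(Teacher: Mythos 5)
Your proposal is correct and follows essentially the same route as the paper: the paper likewise deduces Theorem \ref{main thm for comp2} directly from Theorem \ref{ThmA} and Theorem \ref{ThmB}, with the linear growth (hence infinite dimensionality) of $\WFH_*(L;W_d)$ supplied by Corollary \ref{comp_exam2cor} and the remaining hypotheses (periodic Reeb flow on the prequantization boundary, admissibility of the real Lagrangian ball from Lemma \ref{comp_ex2top}, and $H^1_c(W_d;\R)=0$ via the Weinstein property) checked exactly as you describe. Your index bookkeeping $c-k=n-d$ matches the paper's computation, so nothing is missing.
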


\begin{Rem}
	In \cite[Example 7.14]{CDvK}, it is proven that $[\tau]$ has infinite order in $\pi_0(\symp^c(\widehat{W}_d))$ for $d>1$.
\end{Rem}

		


\end{document}